\newtheorem{theorem}{Theorem}[section]
\newtheorem{lemma}[theorem]{Lemma}
\theoremstyle{definition}
\newtheorem{definition}[theorem]{Definition}
\newtheorem{example}[theorem]{Example}
\newtheorem{proposition}[theorem]{Proposition}
\newtheorem{note}[theorem]{Note}
\newtheorem{result}{Result}[section]
\theoremstyle{remark}
\newtheorem{remark}[theorem]{Remark}
\newcommand{\be}{\begin{equation}}
\newcommand{\ee}{\end{equation}}
\numberwithin{equation}{section}
\newcommand{\setword}[2]{%
  \phantomsection
  #1\def\@currentlabel{\unexpanded{#1}}\label{#2}%
}
\newcommand\reallywidehat[1]{%
\savestack{\tmpbox}{\stretchto{%
  \scaleto{%
    \scalerel*[\widthof{\ensuremath{#1}}]{\kern-.6pt\bigwedge\kern-.6pt}%
    {\rule[-\textheight/2]{1ex}{\textheight}}
  }{\textheight}%
}{0.5ex}}%
\stackon[1pt]{#1}{\tmpbox}%
}
\begin{document}

\title[Set-Valued $\alpha$-Fractal Functions]{Set-Valued $\alpha$-Fractal Functions}



\author{Megha Pandey}
\address{Department of Mathematical Sciences\\
Indian Institute of Technology (BHU)\\
Varanasi, India 221005}
\email{meghapandey1071996@gmail.com}
\author{Tanmoy Som}
\address{Department of Mathematical Sciences\\
Indian Institute of Technology (BHU)\\
Varanasi, India 221005}
\email{tsom.apm@iitbhu.ac.in}

\author{Saurabh Verma}
\address{Department of Applied Sciences\\
             Indian Institute of Information Technology Allahabad, Prayagraj, India 211015}
             \email{saurabhverma@iiita.ac.in}


\subjclass[2010]{28A80, 10K50, 41A10.}
\keywords{Set-valued function, fractal function, Hausdorff metric, H\"older space, bounded variation, fractal dimension}

\begin{abstract}
In this paper, we introduce the concept of the $\alpha$-fractal function and fractal approximation for a set-valued continuous map defined on a closed and bounded interval of real numbers. Also, we study some properties of such fractal functions. Further, we estimate the perturbation error between the given continuous function and its $\alpha$-fractal function. Additionally, we define a new graph of a set-valued function different from the standard graph introduced in the literature and establish some bounds on the fractal dimension of the newly defined graph of some special classes of set-valued functions. Also, we explain the need to define this new graph with examples. In the sequel, we prove that this new graph of an $\alpha$-fractal function is an attractor of an iterated function system. 
\end{abstract}

\maketitle
\section{Introduction} 
Approximation theory has gained appreciable attention in the literature. Fractal theory embraced the approximation theory in 1986 by Barnsley \cite{MF1} through his paper ``Fractal functions and interpolations''. Following this pioneering work of Barnsley, Navascu\'es \cite{navascues2005fractal, navascues2004generalization} and her group studied a parameterized class of fractal interpolation function, later known as $\alpha$-fractal function, associated with the continuous function defined on a real compact interval. After that, several theories have been developed concerning fractal interpolation functions. For example, in \cite{verma2020parameter} concept of $\alpha$-fractal function has been studied, and in \cite{chand2006generalized}, a generalized $C^r$ fractal interpolation function has been studied. 

In this paper, we have extended the notion of $\alpha$-fractal function in the case of set-valued maps. The significance of set-valued maps can be found in many essential areas, such as optimization theory, game theory, control theory, etc. One may refer \cite{Aubin} to understand the properties and nature of set-valued maps. The algebra of sets is different from those of numbers. There are different types of the sum has been given for sets. For instance, in \cite{artstein1989piecewise}, binary metric average of sets is defined. In \cite{dyn2005set}, Minkowski sum of two sets is used, and in \cite{berdysheva2019metric} the notion of the sum specified in \cite{artstein1989piecewise} has been extended, which is known as a metric linear combination of sets. In this paper, we have taken the Minkowski sum of sets.

Approximation of set-valued maps is one of the celebrated topics in the literature. Several theories have been given regarding the classical approximation of set-valued maps. See, for instance, \cite{levin1986multidimensional} where the notion of univariate data interpolation function has been given. Initially, approximation theory mainly focused on set-valued maps with convex images, known as convex set-valued maps. For example, in \cite{vitale1978approximation}, Vitale explored the approximation of convex set-valued maps with set-valued Bernstein polynomials. One may refer \cite{baier2011set,campiti2019korovkin,dyn2005set} for more research on the approximation of convex set-valued maps. The Minkowski sum of the two sets has been taken in all those researches. In \cite{artstein1989piecewise}, Arstein studied the approximation of set-valued maps having compact images, known as compact set-valued maps. Instead of Minkowski's sum of sets, he used the set of a sum of special pairs of elements, later known as  ``metric pairs". One may refer \cite{dyn2014approximation,berdysheva2019metric} for some more research on the approximation of compact set-valued maps. In this paper, we have studied the fractal approximation of set-valued maps.

Like fractal approximation, estimating the fractal dimension is also a fascinating area in fractal theory. It provides the statistical ratio of complexity with the details of how a fractal pattern changes with the scale it is measured. Several notions of fractal dimension have been introduced so far in the literature. For example, Hausdorff dimension , box dimension, packing dimension \cite{falconer2004fractal,barnsley2014fractals,massopust2016fractal}, etc. In this paper, we have worked on Hausdorff and box dimensions.

\subsection{Motivation and work done}

The concept of $\alpha$-fractal function, fractal approximation, and fractal dimension have been studied for different types of single-valued maps. For instance, in \cite{falconer2004fractal,barnsley2014fractals,jha2021dimensional} $\alpha$-fractal function for univariate single-valued maps and fractal dimension of the graph of some classes of univariate single-valued maps have been discussed. In \cite{verma2020parameter}, $\alpha$-fractal function and dimensional results for bivariate single-valued maps have been explored. In \cite{pandey2022some, Megha2022}, existence of $\alpha$-fractal function corresponding to continuous multivariate functions is given. Agrawal et.al.,\cite{vishal2021Lp} worked on the $L_p$-approximation using fractal functions on the Sierpi{\'n}ski gasket. Sahu and Priyadarshi \cite{sahu2020box} have studied the box dimension of the graph of a harmonic function on the Sierpi{\'n}ski gasket. Persuaded by these researches, we have extended the concept of classical approximation of set-valued maps to the fractal approximation of set-valued maps. We have introduced the notion of $\alpha$-fractal function for set-valued maps. Still, unlike in the case of single-valued maps, we noticed that, in general, set-valued $\alpha$-fractal function is not interpolatory in nature. Further, we have worked on estimating the fractal dimension of the graph of some special classes of set-valued maps.

\subsection{Delineation}

The proposed paper is assembled as follows. The next section is reserved for notations and preliminaries required for our study. Section \ref{sec3} is devoted to the development of fractal functions and to exploring their properties. In Section \ref{sec4}, we have studied the fractal approximations and constrained approximations of the set-valued map. Further, in Section \ref{sec5}, we have defined a new definition of the graph of set-valued maps and provided some dimensional results for this new graph. Also, we have explained the need to define this new graph. Moreover, we proved that there exists an iterated function system whose attractor is this new graph of the set-valued $\alpha$-fractal function. We have concluded our paper in Section \ref{conclusion}.


\section{Notations and Preliminaries}
The following are the notations which we have used in our paper :
\begin{itemize}
    \item $\mathbb{N}$: Collection of all natural numbers
    \item $\mathbb{R}$: Collection of all real numbers
    \item $I$: Closed and bounded interval of $\mathbb{R}$
    \item  $\mathcal{K}(\mathbb{R})=\{A\subset \mathbb{R}:A~\text{is a compact subset of}~\mathbb{R}\}$
    \item $\mathcal{K}_c(\mathbb{R})=\{A\in \mathcal{K}(\mathbb{R}):A~\text{is a convex subset of}~\mathbb{R}\}$
    \item $H_d(A,B)=\max \left\{\underset{a\in A}{\sup}\underset{b\in B}{\inf}\lvert a-b\rvert,\underset{b\in B}{\sup}\underset{a\in A}{\inf}\lvert b-a\rvert\right\}$ be the metric defined on $\mathcal{K}(\mathbb{R})$. It is broadly known as Hausdorff metric
    \item  $\mathcal{C}(I, \mathcal{K}(\mathbb{R}))$: Collection of all the continuous maps from $I$ to $\mathcal{K}(\mathbb{R})$
    \item $\mathcal{L}ip(I, \mathcal{K}(\mathbb{R}))$: Collection of all the Lipschitz maps from $I$ to $\mathcal{K}(\mathbb{R})$
    \item $\sigma\text{-}\mathcal{HC}$: Collection of all the H\"older continuous maps from $I$ to $\mathcal{K}(\mathbb{R})$ with exponent $\sigma$.
\end{itemize}
\begin{definition}\cite{falconer2004fractal}
Let $V\subseteq \mathbb{R}$ be a subset of $\mathbb{R}$, then the diameter of $V$ is defined as
\[\lvert V\rvert =\sup\left\{\lvert u-w \rvert: u,w \in V\right\}.\]
Let $F\subset \mathbb{R}$ be a subset of $\mathbb{R}$. A countable collection (or finite) of sets, $V_i$ is said to be $\eta$-cover of $F$ if it satisfies 
\[F \subseteq \bigcup_{i} V_i \text{ such that } \lvert V_i \rvert \leq \eta \text{ for all } i.\]
\end{definition}
\begin{definition}\cite{falconer2004fractal}
For a non-negative number $t$ and $\eta>0$, define 
\[H^t_{\eta}(F)= \inf \left\{\sum_{i=1}^{\infty}\lvert V_i\rvert^t:  \{V_i\} \text{ is a } \eta \text{-cover of } F \right\}.\]
Then, $t$-dimensional Hausdorff measure of $F$ is defined as 
\[H^t(F)=\lim_{\eta \rightarrow 0}H^t_{\eta}(F).\]
\end{definition}

\begin{definition}\cite{falconer2004fractal}
Consider $F \subseteq \mathbb{R}$ and $t \ge 0.$ The Hausdorff dimension of $F$ is defined as,
\[\dim_H(F)=\sup\{t:H^t(F)=\infty\big\}=\inf\big\{t:H^t(F)=0\}.\]
\end{definition}

\begin{definition}\cite{falconer2004fractal}
Assume $F$ be a non-empty and bounded subset of $\mathbb{R}$ and $N_{\eta}(F)$ be the lowest count of sets having at most $\eta$ diameter which can cover $F.$ The upper box dimension and lower box dimension of $F$, are defined as
\[\overline{\dim}_B(F)=\varlimsup_{\eta \rightarrow 0} \frac{\log N_{\eta}(F)}{- \log \eta} ~\text{ and }~ \underline{\dim}_B(F)=\varliminf_{\eta \rightarrow 0} \frac{\log N_{\eta}(F)}{- \log \eta}, \text{ respectively }.\]
If $\overline{\dim}_B(F)=\underline{\dim}_B(F)$, then it is called as box dimension of $F$, and it is defined as, $\dim_B(F)=\lim_{\substack{\eta \rightarrow 0}} \frac{\log N_{\eta}(F)}{- \log \eta}.$
\end{definition}

 \begin{definition}\cite{Aubin}
 Let $X$ and $Y$ be metric spaces and $F:X\rightrightarrows Y$ be a set-valued map from $X$ to $Y$, then the graph of $F$ is defined as,
 \begin{equation}\label{Gf1}
   G_F=\{(u,w) \in X \times Y: w \in F(u)\}.
 \end{equation}
 $F(u)$ is known as the image (or) the value of $F$ at $u$. If there is at least one element $u \in X$ such that $F(u)$ is non-empty, then $F$ is said to be nontrivial. If $F(u)$ is non-empty for each $u\in X$, then $F$ is known to be strict. The domain and range of $F$ is defined as
 \[ \text{Dom}(F):=\{u \in X: F(u) \ne \emptyset\} ~\text{and}~ \text{Im}(F):= \underset{u \in X}{\bigcup}F(u), \text{respectively}.\]
 \end{definition}
 
 \begin{definition}\label{order}
  Let $F, G:I\rightrightarrows \mathbb{R}$ be set-valued maps. Then, $F\leq G$ if and only if $F(u)\subseteq G(u)$ for all $u\in I$.
\end{definition}

 \begin{remark}
 If $F(u)$ is closed (convex, compact, bounded), then $F$ is said to be closed (convex, compact, bounded).
 \end{remark}
 \begin{definition}\cite{Aubin}
 Let $F: X \rightrightarrows Y $ be a set-valued mapping and $u \in \text{Dom}(F)$ such that for every neighborhood $\mathfrak{U}$ of $F(u),$
 \begin{equation}\label{uppersemi}
 \text{there exists } \eta > 0  \text{ such that } F(u') \subset \mathfrak{U} \text{ for all } u' \in B_X(u, \eta).
 \end{equation}
 Then, $F$ is characterized as upper semicontinuous at $u$. If it satisfies \eqref{uppersemi} for each $u\in \text{Dom}(F)$, then $F$ is known as upper semicontinuous function.
 
 If for every $w \in F(u)$ and every sequence $\{u_n\}\subset \text{Dom}(F)$ converges to $u$, there exists a sequence of elements $w_n\in F(u_n)$ converges to $w$, then $F$ is characterized as lower semicontinuous at $u$. If it is lower semicontinuous at each $u\in \text{Dom}(F)$, then $F$ is said to be the lower semicontinuous function.
 \end{definition}
 
\begin{lemma}\emph{\cite{Aubin}}
 A set-valued map $F$ is said to be convex if and only if $\text{for all } u_1, u_2 \in \text{Dom}(F),~\lambda \in [0,1]$, we have
 \[\lambda F(u_1)+(1-\lambda)F(u_2) \subset F\left(\lambda u_1+(1-\lambda)u_2\right).\]
\end{lemma}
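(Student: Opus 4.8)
The plan is to unwind what ``$F$ is convex'' means for a set-valued map --- namely that its graph $G_F$, as in \eqref{Gf1}, is a convex subset of $X\times Y$ --- and to show this is equivalent to the displayed Minkowski inclusion. The proof is a straightforward double implication translating the ``graph'' formulation into the ``pointwise'' formulation and back; the only thing worth flagging before starting is the role of $\mathrm{Dom}(F)$. I would first record that if $G_F$ is convex then $\mathrm{Dom}(F)=\pi_X(G_F)$ is convex, being the image of a convex set under the linear projection $\pi_X\colon X\times Y\to X$, so that $\lambda u_1+(1-\lambda)u_2\in\mathrm{Dom}(F)$ whenever $u_1,u_2\in\mathrm{Dom}(F)$ and $\lambda\in[0,1]$; this is why the hypothesis is (and can be) phrased over $\mathrm{Dom}(F)$ rather than all of $X$.

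For the forward direction I would assume $G_F$ convex, fix $u_1,u_2\in\mathrm{Dom}(F)$ and $\lambda\in[0,1]$, and take an arbitrary $z\in\lambda F(u_1)+(1-\lambda)F(u_2)$. By definition of the Minkowski combination, $z=\lambda w_1+(1-\lambda)w_2$ for some $w_i\in F(u_i)$, hence $(u_1,w_1),(u_2,w_2)\in G_F$. Convexity of $G_F$ gives $\bigl(\lambda u_1+(1-\lambda)u_2,\ \lambda w_1+(1-\lambda)w_2\bigr)\in G_F$, i.e. $z\in F(\lambda u_1+(1-\lambda)u_2)$; since $z$ was arbitrary, the inclusion follows. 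Conversely, assuming the inclusion for all admissible $u_1,u_2,\lambda$, I would take $(u_1,w_1),(u_2,w_2)\in G_F$ and $\lambda\in[0,1]$; then $u_i\in\mathrm{Dom}(F)$ and $w_i\in F(u_i)$, so $\lambda w_1+(1-\lambda)w_2\in\lambda F(u_1)+(1-\lambda)F(u_2)\subseteq F(\lambda u_1+(1-\lambda)u_2)$, which says precisely that $\lambda(u_1,w_1)+(1-\lambda)(u_2,w_2)\in G_F$. As the two graph points and $\lambda$ were arbitrary, $G_F$ is convex.

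There is no real obstacle here --- the statement is essentially a reformulation --- so the ``hard part'' is only presentational: being careful that the Minkowski-sum notation $\lambda F(u_1)+(1-\lambda)F(u_2)$ is understood as $\{\lambda w_1+(1-\lambda)w_2: w_1\in F(u_1),\,w_2\in F(u_2)\}$, and that the inclusion is not read vacuously, which is handled by the convexity of $\mathrm{Dom}(F)$ noted above. I would present the proof in exactly the two short implications sketched, after that one-line remark on the domain.
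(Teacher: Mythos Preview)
Your argument is correct: taking convexity of a set-valued map to mean convexity of its graph $G_F$ (which is the definition in Aubin), the two implications you give are exactly the standard unwinding, and your remark on the convexity of $\mathrm{Dom}(F)$ is the right bookkeeping to make the inclusion non-vacuous. There is nothing to criticize in the mathematics.

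As for comparison: the paper does not actually prove this lemma --- it is quoted from \cite{Aubin} without proof and functions essentially as a definition/characterization in the preliminaries. So your write-up supplies strictly more than the paper does here.
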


\begin{definition}\cite{barnsley2014fractals}\label{Iterated}
Consider $(X,d)$ be a complete metric space and $\mathcal{K}(X)$ be the collection of all non-empty compact subsets of $X$ and $H_d$ be the Hausdorff metric defined on $\mathcal{K}(X)$ and $m$ be a positive integer such that $w_i: X\rightarrow X$ be contraction map for each $i\in \{1,\ldots,m\}$, then the system $\left\{(X,d): w_1,\ldots, w_m\right\}$ is known as Iterated Function System (IFS).
\end{definition}
 
 In the Definition \ref{Iterated}, IFS is satisfying the Banach contraction principle. One can construct the fractal by using those IFSs which satisfies some other contractions etc. For instance, in \cite{ri2018new} the construction of fractal using $\phi$-contraction has been introduced.
 
 
\begin{definition}\cite{bandt2006open}\label{OSC}
An IFS, $\big\{(X,d): \omega_1, \ldots \omega_k\big\}$ is said to satisfy Open Set Condition (OSC), if there exists a non-empty open set $V\subset \mathbb{R}$ such that 
\[\bigcup_{i=1}^{m}\omega_i(V)\subset V ~\text{ such that }~ \omega_i(V)\cap \omega_j(V)=\emptyset ~\text{ for }~ i\neq j.\]
Moreover, if $A$ is an attractor of IFS such that $V\cap A\neq \emptyset$, then the IFS is said to satisfy the Strong Open Set Condition (SOSC).
 \end{definition}

\section{Fractal functions in $C( I, \mathcal{K}( \mathbb{R}) )$}\label{sec3}
We know that space $\mathcal{C}( I, \mathcal{K}( \mathbb{R}) )$ when endowed with metric $d_{\mathcal{C}}$ is a complete metric space, where \[d_{\mathcal{C}}(F,G)=\lVert F-G \rVert _{\infty}=\sup_{\substack{u\in I}}H_d( F(u),G(u)).\] 

\begin{note}\cite{hu1997handbook}\label{new71}
   Recall some properties of the Hausdorff metric as follows.
\begin{enumerate}
   \item For $B, C, D, E \in  \mathcal{K} (X)$, we have
   \[H_d (B + C, D + E) \le  H_d (B,D) + H_d (C, E),\]
   where $Y + Z := \{y + z : y \in  Y\in \mathcal{K} (X) , z \in Z\in \mathcal{K} (X)\}$ is known as Minkowski sum of $Y$ and $Z.$
  \item For any $\lambda \in \mathbb{R}$, $H_d (\lambda B,\lambda D)= \lvert \lambda\rvert   H_d (B,D).$
\end{enumerate}
\end{note}

\begin{theorem}\label{Thm3.2}
Assume $F \in \mathcal{C}( I, \mathcal{K}( \mathbb{R})).$ Let   $\Delta:=\{(u_1,\ldots,u_N): u_1 < \cdots <u_N \}$ be a given data points such that it forms a partition of $I,$ and let $I_n=[u_{n},u_{n+1}].$ Let $L_n:I \rightarrow I_n $ be contractive homeomorphism such that $L_{n}(u_1)=u_{n} \text{ and } L_{n}(u_N)=u_{n+1}$ or $L_{n}(u_1)=u_{n+1} \text{ and } L_{n}(u_N)=u_{n}$. Further, assume that the base function $S \in \mathcal{C}( I, \mathcal{K}( \mathbb{R}) )$ satisfies
   \[S(u_1)-F(u_1)=S(u_N)-F(u_N),\]
   and scaling factor $\alpha \in \mathbb{R}.$ If $\lvert \alpha \rvert < 1,$ then there exists a unique function $F^{\alpha}_{\Delta,S} \in \mathcal{C}( I, \mathcal{K}( \mathbb{R}) )$ satisfying the following self-referential equation 
   \begin{equation}\label{self12}
      F^{\alpha}_{\Delta,S}(u)= F(u) + \alpha [F^{\alpha}_{\Delta,S}(L_n^{-1}(u)) - S(L_n^{-1}(u))] ~\text{ for every }~ u\in I_n,
   \end{equation}
    where $n \in J=\{1,\ldots, N-1\}. $ 
   \end{theorem}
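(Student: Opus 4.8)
The plan is to set up a Read--Bajraktarević-type operator on the complete metric space $\big(\mathcal{C}(I,\mathcal{K}(\mathbb{R})),d_{\mathcal{C}}\big)$ and invoke the Banach fixed point theorem. First I would define, for $G\in\mathcal{C}(I,\mathcal{K}(\mathbb{R}))$, the operator $T$ by
\[
(TG)(u) = F(u) + \alpha\big[G(L_n^{-1}(u)) - S(L_n^{-1}(u))\big] \quad\text{for } u\in I_n,\ n\in J,
\]
where the Minkowski sum is used on the right-hand side. The first step is to check that $TG$ is well defined, i.e.\ single-valued as a point of $\mathcal{K}(\mathbb{R})$: the only ambiguity occurs at the shared endpoints $u_n$ of adjacent subintervals $I_{n-1}$ and $I_n$, and this is exactly where the compatibility condition $S(u_1)-F(u_1)=S(u_N)-F(u_N)$ (together with the endpoint conditions on $L_n$) is needed. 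I would verify that, whether $L_n$ preserves or reverses orientation, the two candidate values at $u_n$ coincide because $L_{n-1}^{-1}(u_n)$ and $L_n^{-1}(u_n)$ land on $u_1$ and $u_N$ in the appropriate pairing, and $F(u)+\alpha[G(u_1)-S(u_1)] = F(u)+\alpha[G(u_N)-S(u_N)]$ follows from the hypothesis when $G$ itself satisfies the same endpoint relation. Continuity of $TG$ on each $I_n$ is immediate from continuity of $F$, $G$, $S$ and the homeomorphisms $L_n$, and continuity at the junction points follows from the matching just described, so $TG\in\mathcal{C}(I,\mathcal{K}(\mathbb{R}))$.

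Next I would establish the contraction estimate. For $G_1,G_2\in\mathcal{C}(I,\mathcal{K}(\mathbb{R}))$ and $u\in I_n$, using property (1) of Note \ref{new71} (subadditivity of $H_d$ under Minkowski sums, with $F(u)$ and $-\alpha S(L_n^{-1}(u))$ as common summands contributing zero) and property (2) (homogeneity, $H_d(\lambda B,\lambda D)=|\lambda|H_d(B,D)$), one gets
\[
H_d\big((TG_1)(u),(TG_2)(u)\big) = H_d\big(\alpha G_1(L_n^{-1}(u)),\alpha G_2(L_n^{-1}(u))\big) = |\alpha|\,H_d\big(G_1(L_n^{-1}(u)),G_2(L_n^{-1}(u))\big).
\]
Taking the supremum over $u\in I$ yields $d_{\mathcal{C}}(TG_1,TG_2)\le|\alpha|\,d_{\mathcal{C}}(G_1,G_2)$, so $T$ is a contraction since $|\alpha|<1$. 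By the Banach fixed point theorem on the complete metric space $\mathcal{C}(I,\mathcal{K}(\mathbb{R}))$, $T$ has a unique fixed point $F^{\alpha}_{\Delta,S}$, which by construction satisfies the self-referential equation \eqref{self12}. One should also check that $T$ maps the closed (hence complete) subset of functions $G$ satisfying $G(u_1)-F(u_1)=G(u_N)-F(u_N)$ into itself, so that the well-definedness argument applies to iterates of $T$ and the fixed point inherits this endpoint relation; alternatively one notes $F$ itself satisfies it trivially and runs the iteration from $F$.

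The main obstacle I expect is the well-definedness check at the partition nodes: unlike the classical single-valued case, one must be careful that the Minkowski-sum expression produces a genuine point of $\mathcal{K}(\mathbb{R})$ with no inconsistency, handling both orientation cases for $L_n$ and confirming that the compatibility hypothesis on $S-F$ is exactly what closes the gap; the contraction estimate itself is then routine given the stated properties of the Hausdorff metric. A secondary subtlety is ensuring the subfamily on which the endpoint relation holds is genuinely invariant under $T$, which again reduces to the same compatibility identity.
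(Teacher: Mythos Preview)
Your proposal is correct and follows essentially the same route as the paper: restrict to the closed (hence complete) subspace of $G\in\mathcal{C}(I,\mathcal{K}(\mathbb{R}))$ satisfying the endpoint relation, define the Read--Bajraktarevi\'c operator, use the Hausdorff-metric properties of Note~\ref{new71} to get the contraction bound $d_{\mathcal{C}}(TG_1,TG_2)\le|\alpha|\,d_{\mathcal{C}}(G_1,G_2)$, and apply Banach's fixed point theorem. One small slip: the first displayed equality in your contraction estimate should be an inequality, since Note~\ref{new71}(1) only gives $H_d(B+C,D+C)\le H_d(B,D)$ for general compact sets, but this is harmless for the argument.
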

   \begin{proof}
   Let $ \mathcal{C}_F(I,\mathcal{K}( \mathbb{R}))= \{ G \in \mathcal{C}(I,\mathcal{K}( \mathbb{R})): G(u_1)-S(u_1)~= G(u_N)-S(u_N)\}.$ It is elementary to observe that $\mathcal{C}_F(I,\mathcal{K}( \mathbb{R}))$ is a closed subset of $\mathcal{C}(I,\mathcal{K}( \mathbb{R})),$ hence $\left(\mathcal{C}_F(I,\mathcal{K}( \mathbb{R})), d_{\mathcal{C}}\right)$ is a complete metric space.
   Define \textit{Read-Bajraktarevi\'c }(RB) operator $\Phi: \mathcal{C}_F( I, \mathcal{K}( \mathbb{R}) ) \rightarrow \mathcal{C}_F( I, \mathcal{K}( \mathbb{R}) ) $ by 
   \[(\Phi G)(u)= F(u) + \alpha  [G(L_n^{-1}(u)) - S(L_n^{-1}(u))]\]
   for every $u \in I_n $ and $n \in J  .$ Well-definedness of $\Phi$ can be observed by using the assumptions we have taken for $F,S$, and $\alpha$. With the reference to Note \ref{new71}, we get 
   \begin{align*}
     &H_d((\Phi G)(u),(\Phi H)(u))\\
     =& H_d\Big(F(u) + \alpha  [G(L_n^{-1}(u)) - S(L_n^{-1}(u))],F(u) + \alpha  [H(L_n^{-1}(u))-S(L_n^{-1}(u))]\Big)\\
     \leq& H_d\Big( \alpha G(L_n^{-1}(u)),\alpha H(L_n^{-1}(u))\Big)\\
     =&\lvert \alpha \rvert H_d\Big(  G(L_n^{-1}(u)), H(L_n^{-1}(u))\Big)\\
     \le &  \lvert \alpha \rvert\sup_{u \in I} H_d( G(u),H(u))\\ 
     = &  \lvert \alpha \rvert\lVert G-H\rVert _{\infty}.  
   \end{align*}
   Since $ \lvert \alpha \rvert\lVert G-H\rVert _{\infty} $ is independent of $u,$ hence we have
   \begin{equation*}
    \lVert \Phi G-\Phi H\rVert _{\infty}
     \le   \lvert \alpha \rvert\lVert G-H\rVert _{\infty}.  
   \end{equation*}
  Because $\lvert \alpha \rvert < 1,$ $\Phi$ is a contraction on $\mathcal{C}( I, \mathcal{K}( \mathbb{R}) ).$ Hence, $\Phi$ has a fixed point in $\mathcal{C}(I,\mathcal{K}(\mathbb{R}))$. Let $F^{\alpha}_{\Delta,S}$ be that fixed point, then it satisfies the self-referential equation,
  \[F^{\alpha}_{\Delta,S}(u)= F(u) + \alpha  [F^{\alpha}_{\Delta,S}(L_n^{-1}(u)) - S(L_n^{-1}(u))]\]
   for every $u \in I_n$  and $n \in J.$
   \end{proof}
  
\begin{note}\label{unchange}
Throughout the paper, we denote $F^{\alpha}_{\Delta,S}$ as $F^{\alpha}$ if there is no ambiguity and $\Delta$, $S$, and $J$ has the same meaning it is in Theorem \ref{Thm3.2}.
\end{note}
  
\begin{remark}\label{rmrk3.3}
In the context of Equation \eqref{self12}, we get
\[F^{\alpha}(u_i)= F(u_i) + \alpha  F^{\alpha}(u_1) - \alpha S(u_1)=F(u_i) + \alpha  F^{\alpha}(u_N) - \alpha S(u_N) \text{ for every } u_i\in \Delta, \] 
where $i=1,\ldots, N$. Further, if $F^{\alpha}$ and $S$ are single-valued at the end points such that $F^{\alpha}(u_1) -  S(u_1)=F^{\alpha}(u_N) -  S(u_N)=\{0\},$ then $F^{\alpha}(u_i)=F(u_i)~$ for each $i=1,\ldots,N, \text{ this implies that }~F^{\alpha}$ is a set-valued fractal interpolation function.
\end{remark}

\begin{note}\label{Nt3.4}
The above remark hints at the following: in case $F$ and $S$ are single-valued at the end points such that $F(u_1)-S(u_1)=F(u_N)-S(u_N)=\{0\},$ then the set
\[\mathcal{C}_F(I,\mathcal{K}( \mathbb{R}))= \Big\{ G \in \mathcal{C}(I,\mathcal{K}( \mathbb{R})): G(u_1)-S(u_1)= G(u_N)-S(u_N)=\{0\}\Big\}\] is a complete metric space, and the RB operator $\Phi:\mathcal{C}_F(I,\mathcal{K}( \mathbb{R})) \to \mathcal{C}_F(I,\mathcal{K}( \mathbb{R}))$ as defined in Theorem \ref{Thm3.2} is well-defined and a contraction mapping. Therefore, we have a unique fixed point $F^{\alpha}$ of $\Phi$ satisfying $F^{\alpha}(u_i)=F(u_i)$ for all $i=1,\ldots,N,$ this shows that $F^{\alpha}$ is a set-valued fractal interpolation function.
\end{note}
    Here we give some examples of base functions $S \in \mathcal{C}( I, \mathcal{K}( \mathbb{R}) )$ satisfying $S(u_1)-F(u_1)=S(u_N)-F(u_N):$
    \begin{enumerate}[(i).]
        \item $S(u)=F(t(u))+(u-u_1)(F(u_1)-F(u_1)+(u_N -u)(F(u_N)-F(u_N))$, where $t:I \to I$ be a continuous function which satisfies $t(u_1)=u_1,~t(u_N)=u_N.$
        \item $S(u)=t(u)F(u)+(u-u_1)(F(u_1)-F(u_1)+(u_N -u)(F(u_N)-F(u_N))$, where $t:I \to \mathbb{R}$ be a continuous function which satisfies $t(u_1)=1 \text{ and } t(u_N)=1.$
    \end{enumerate}
   The H\"{o}lder space is defined as follows:
   \[\mathcal{HC}^{\sigma}(I, \mathcal{K}_c( \mathbb{R})  ) := \{G:I \rightarrow \mathcal{K}_c( \mathbb{R}): ~G\in \sigma\text{-}\mathcal{HC}\},\]
 
 Let us recall \cite{michta2020selection} that if we endow the space $\mathcal{HC}^{\sigma}(I,\mathcal{K}_c( \mathbb{R}))$ with metric
\[ H_{\sigma}^{(1)}(G,H)= \sup_{u \in I} H_d(G(u),H(u))+ \sup_{u,w \in I} \frac{H_d\big(G(u)+H(w),H(u)+G(w)\big)}{\lvert u-w \rvert^{\sigma}}.\]
 Then, by \cite[Proposition $1$]{michta2020selection}, it forms a complete metric space.
 \begin{note}
Take $L_n:I \rightarrow I_n $ as affine maps, such that $L_n(u)=a_nu +b_n$ for all $n \in J,$ where $a_n=\frac{u_{n+1}-u_n}{u_N-u_1}$ and $b_n=\frac{u_nu_N-u_1u_{n+1}}{u_N-u_1}$. 
 \end{note}
   
   \begin{theorem}\label{Thm3.5}
    Consider $F, S \in \mathcal{HC}^{\sigma}(I, \mathcal{K}_c( \mathbb{R})) $ such that $S(u_1)-F(u_1)=S(u_N)-F(u_N),$ and let $\alpha \in (-1,1).$ Then, $F^{\alpha}\in \sigma\text{-}\mathcal{HC}$ provided $\frac{\lvert \alpha \rvert}{a^{\sigma}}< 1$, where $a:= \min\{a_j: j \in J \}$. 
 \end{theorem}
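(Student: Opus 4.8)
The plan is to show that the fixed-point function $F^{\alpha}$ inherits $\sigma$-Hölder continuity from $F$ and $S$ by working directly on the self-referential equation \eqref{self12}, exploiting the affine structure of the maps $L_n$. First I would fix notation: let $[G]_\sigma := \sup_{u\neq w} \frac{H_d(G(u)+H(w), H(u)+G(w))}{|u-w|^\sigma}$ be the relevant Hölder seminorm (with a single argument, the numerator becomes $H_d(G(u),G(w))$ in the natural way), and recall that $F,S$ have finite such seminorms by hypothesis. Since $\frac{|\alpha|}{a^\sigma}<1$, I would try to set up a self-map on the complete metric space $\big(\mathcal{HC}^{\sigma}(I,\mathcal{K}_c(\mathbb{R})), H^{(1)}_\sigma\big)$ — more precisely on the closed subset $\mathcal{C}_F \cap \mathcal{HC}^\sigma$ — and verify that the RB operator $\Phi$ from Theorem \ref{Thm3.2} maps this space into itself; since $\Phi$ is already a contraction in the sup-metric and the $\sigma$-Hölder subspace is closed under the sup-metric limit, the unique fixed point $F^\alpha$ will then lie in $\sigma\text{-}\mathcal{HC}$.

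The heart of the argument is the estimate showing $\Phi G \in \sigma\text{-}\mathcal{HC}$ whenever $G$ is. I would estimate the Hölder quotient of $\Phi G$ by splitting into two cases. If $u,w$ lie in the same subinterval $I_n$, then using \eqref{self12}, the subadditivity and homogeneity of $H_d$ (Note \ref{new71}), and $L_n^{-1}(u)-L_n^{-1}(w) = (u-w)/a_n$, one gets a bound of the shape
\[
H_d\big((\Phi G)(u),(\Phi G)(w)\big) \le [F]_\sigma |u-w|^\sigma + |\alpha|\,\big([G]_\sigma + [S]_\sigma\big)\,\frac{|u-w|^\sigma}{a_n^\sigma},
\]
so the contribution to the Hölder seminorm is controlled by $[F]_\sigma + \frac{|\alpha|}{a^\sigma}([G]_\sigma + [S]_\sigma)$. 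If $u,w$ lie in different subintervals, I would interpolate through the partition points between them: since each $L_n$ is contractive, $|u-w|^\sigma$ dominates the sum of the $\sigma$-powers of the lengths of the pieces (here the exponent $\sigma\le 1$ and the elementary inequality $\sum t_i^\sigma \ge (\sum t_i)^\sigma$ is what makes the telescoping work in the right direction), and continuity of $\Phi G$ at the partition points lets me add up the same-interval estimates. This yields $[\Phi G]_\sigma \le \frac{[F]_\sigma + \frac{|\alpha|}{a^\sigma}[S]_\sigma}{1 - \frac{|\alpha|}{a^\sigma}} =: M$ for the fixed point, so choosing the invariant set to be $\{G \in \mathcal{C}_F : [G]_\sigma \le M\}$ makes $\Phi$ a self-map there.

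The main obstacle I anticipate is the cross-interval case: one must be careful that the telescoping inequality goes the correct way for $\sigma\in(0,1]$, and that the set-valued Hölder seminorm — which is phrased via the symmetrized quantity $H_d(G(u)+H(w),H(u)+G(w))$ rather than a naive $H_d(G(u),G(w))$ — still satisfies the triangle-type inequality needed to chain estimates across partition points. I would resolve this by checking that $H_d(G(u),G(w)) \le H_d(G(u),G(v)) + H_d(G(v),G(w))$ (ordinary triangle inequality in the Hausdorff metric) suffices for the single-argument seminorm appearing in the fixed-point estimate, and only invoke the symmetrized form where the ambient metric $H^{(1)}_\sigma$ itself requires it; the completeness of $\mathcal{HC}^\sigma$ under $H^{(1)}_\sigma$ from \cite{michta2020selection} then guarantees the fixed point stays in the space. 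Everything else is a routine application of Note \ref{new71} together with the contraction already established in Theorem \ref{Thm3.2}.
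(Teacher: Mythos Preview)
Your route differs from the paper's in a material way. The paper does not try to bound the single-function seminorm $[\Phi G]_\sigma$ and then locate an invariant H\"older ball on which the sup-metric contraction from Theorem~\ref{Thm3.2} can be re-applied; instead it shows directly that $\Phi$ is a contraction on $\mathcal{HC}^\sigma_F(I,\mathcal{K}_c(\mathbb{R}))$ with respect to the H\"older metric $H_\sigma^{(1)}$ itself, with ratio $\lvert\alpha\rvert/a^\sigma$, and then invokes Banach's theorem in that space. The point is that in the symmetrized quantity $H_d\big(\Phi G(u)+\Phi H(w),\,\Phi H(u)+\Phi G(w)\big)$ the contributions of $F$ and of $S$ cancel completely (they appear identically on both sides), leaving only $\lvert\alpha\rvert\,H_d\big(G(\cdot)+H(\cdot),\,H(\cdot)+G(\cdot)\big)$. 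No bookkeeping of $[F]_\sigma$, $[S]_\sigma$, or any invariant set is required. Your approach, by contrast, must track all of these because in $H_d(\Phi G(u),\Phi G(w))$ nothing cancels.

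More seriously, your cross-interval step is wrong as written. You claim that ``$|u-w|^\sigma$ dominates the sum of the $\sigma$-powers of the lengths of the pieces,'' but the very inequality you cite, $\sum t_i^\sigma \ge (\sum t_i)^\sigma$ for $0<\sigma\le 1$, runs the opposite way: it gives $\sum_k \lvert p_{k+1}-p_k\rvert^\sigma \ge \lvert u-w\rvert^\sigma$, not $\le$. Telescoping through the partition nodes therefore yields only $H_d(\Phi G(u),\Phi G(w)) \le C\sum_k \lvert p_{k+1}-p_k\rvert^\sigma$, and to convert this into a multiple of $\lvert u-w\rvert^\sigma$ you must pay an extra factor (for instance $N^{1-\sigma}$ via the power-mean inequality). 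Carrying that factor through your invariant-ball argument forces the stronger condition $N^{1-\sigma}\lvert\alpha\rvert/a^\sigma<1$, which is not the hypothesis of the theorem. The paper's contraction-in-$H_\sigma^{(1)}$ argument avoids this difficulty precisely because the $F$ and $S$ terms vanish from the comparison estimate; you should either switch to that approach or find an independent device to control the cross-interval quotient without the telescoping loss.
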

 \begin{proof}
 Consider $ \mathcal{HC}^{\sigma}_F(I,\mathcal{K}_{c}(\mathbb{R}) )= \{ G \in \mathcal{HC}^{\sigma}(I,\mathcal{K}_{c}(\mathbb{R})): G(u_1)-S(u_1)=~G(u_N)-S(u_N) \}.$ It is easy to notice that $\mathcal{HC}^{\sigma}_F(I, \mathcal{K}_{c}( \mathbb{R}))$ is a closed subset of $\mathcal{HC}^{\sigma}(I,\mathcal{K}_{c}( \mathbb{R})),$ and hence complete with respect to the metric $H_{\sigma}^{(1)}$. Define a map $\Phi: \mathcal{HC}^{\sigma}_F(I,\mathcal{K}_c( \mathbb{R})) \rightarrow \mathcal{HC}^{\sigma}_F(I,\mathcal{K}_c( \mathbb{R}))$ as 
 \[(\Phi G)(u)=F(u)+\alpha ~(G-S)(L_j^{-1}(u))\]
 for each $u \in I_j $ where $j \in J.$ Clearly, $\Phi$ is well-defined. Now for $G, H \in \mathcal{HC}^{\sigma}_F(I,\mathcal{K}_c( \mathbb{R}) )$, we have
  \begin{align*}
    & H_{\sigma}^{(1)}(\Phi(G),\Phi(H))\\
    =&\sup_{u \in I}H_d(\Phi(G)(u),\Phi(H)(u))
    +\max_{j  \in J} \sup_{u \ne w, u,w  \in I_j} \frac{H_d\big(\Phi(G)(u)+\Phi(H)(w),\Phi(H)(u)+\Phi(G)(w)\big)}{\lvert u-w \rvert^{\sigma}}\\
    \le& ~\lvert \alpha \rvert \sup_{u \in I}H_d(G(u),H(u))\\
    &\hspace{2cm}+ \max_{j  \in J}  \sup_{u \ne w, u,w \in I_j} \frac{H_d\Big(\alpha G(L_j^{-1}(u)) +\alpha H(L_j^{-1}(w)),\alpha H(L_j^{-1}(u))+\alpha G(L_j^{-1}(w))\Big)}{\lvert u-w \rvert^{\sigma}}
\end{align*}
\begin{align*}
    &\le\lvert \alpha \rvert \sup_{u \in I}H_d(G(u),H(u))\\
   &\hspace{3cm} + \lvert \alpha \rvert \max_{j  \in J}  \sup_{u \ne w, u,w \in I_j} \frac{H_d\Big( G(L_j^{-1}(u)) + H(L_j^{-1}(w)), H(L_j^{-1}(u))+ G(L_j^{-1}(w))\Big)}{\lvert a_j\rvert^{\sigma}\lvert L_j^{-1}(u)-L_j^{-1}(w)\rvert^{\sigma}}\\ 
    &\le ~\lvert \alpha \rvert \sup_{u \in I}H_d(G(u),H(u))\\
    &\hspace{3cm}+ \frac{\lvert \alpha \rvert}{a^{\sigma}} \max_{j  \in J}  \sup_{u \ne w, u,w \in I_j} \frac{H_d\Big( G(L_j^{-1}(u)) + H(L_j^{-1}(w)), H(L_j^{-1}(u))+ G(L_j^{-1}(w))\Big)}{\lvert L_j^{-1}(u)-L_j^{-1}(w)\rvert^{\sigma}}\\
    &\le ~ \frac{\lvert \alpha \rvert}{a^{\sigma}}\left[ \sup_{u \in I}H_d(G(u),H(u))+    \sup_{u \ne w, u,w \in I} \frac{H_d\Big( G(u) + H(w), H(u)+ G(w)\Big)}{\lvert u-w \rvert^{\sigma}} \right]\\
    &\le \frac{\lvert \alpha \rvert}{a^{\sigma}} H_{\sigma}^{(1)}(G,H).
\end{align*}

Since $\frac{\lvert \alpha \rvert}{a^{\sigma}} < 1$, which implies $\Phi$ is a contraction map on $ \mathcal{HC}^{\sigma}_F(I,\mathcal{K}_c( \mathbb{R}) ).$ Now, the Banach contraction principle ensures that a unique fixed point of $\Phi$ exists. This completes the proof.
\end{proof}

\begin{definition}
Assume $ F:I \rightarrow \mathcal{K}( \mathbb{R})$ be a set-valued map. For every partition $ P:=\{(t_0, \ldots ,t_m): t_0< \dots <t_m \}$ of  $I,$  define 
\[V(F,I)= \sup_P \sum_{i=1}^{m} H_d(F(t_i),F(t_{i-1})),\]
where the supremum runs over all partitions, $P$ of $I.$\\ 
We set $\lVert F \rVert_{\mathcal{BV}}:= \lVert F \rVert_{\infty}+ V(F,I),$ where $\lVert F \rVert_{\infty}:=\underset{u \in I}{\sup}~\lVert F(u)\rVert =\underset{u \in I}{\sup}~ H_d(F(u),\{0\}).$ 
Then, $F$ will be characterize as a bounded variation function, if $\lVert F \rVert_{\mathcal{BV}} < \infty$. $\mathcal{BV}(I,\mathcal{K}( \mathbb{R}))$ will be denoted as a collection of all bounded variation function on $I$.
\end{definition}

\begin{remark}
It is interesting to write the following small observation :
define functions $F,T:[0,1] \rightarrow \mathcal{K}(\mathbb{R})$ as follows $F(x)=\begin{cases}\sin\left(\frac{1}{x}\right), &\text{when}~ x\neq 0\\
0, &\text{otherwise}\end{cases}$ and $T(x)=[-1,1].$ Here $F(x) \subset T(x)$ for each $x\in [0,1]$, such that $T \in \mathcal{BV}(I,\mathcal{K}(\mathbb{R}))$ while $F \notin \mathcal{BV}(I,\mathcal{K}(\mathbb{R}))$. This example shows that for set-valued mappings satisfying $F\le T$ does not imply $\lVert F \rVert_{\mathcal{BV}} \leq \lVert T\rVert _{\mathcal{BV}}$.
\end{remark}

As a prelude to our next result, we note the following lemma.
\begin{lemma}\label{lem2}
Consider $\{F_n\}$ is a sequence of set-valued continuous maps which uniformly converges to $F: I \to \mathcal{K}(\mathbb{R}).$ Then, for a given partition $P=\{(y_0,\dots,y_m):y_0  < \dots< y_m\}$ of $I$, we have 
\[ \sum_{i=1}^{m} H_d(F_n(y_i),F_n(y_{i-1})) \to \sum_{i=1}^{m} H_d(F(y_i),F(y_{i-1})).\]
Moreover,
\begin{equation}\label{eq3.2}
   \sup_{P}  \sum_{i=1}^{m} H_d(F(y_i),F(y_{i-1})) \le \liminf_{n \to \infty} \sup_{P} \sum_{i=1}^{m} H_d(F_n(y_i),F_n(y_{i-1})). 
\end{equation}
\end{lemma}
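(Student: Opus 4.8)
The plan is to establish the termwise convergence first and then derive the $\liminf$ inequality \eqref{eq3.2} as a formal consequence. The single analytic ingredient is the quadrilateral inequality valid in every metric space, applied here to the Hausdorff metric $H_d$ on $\mathcal{K}(\mathbb{R})$: for all $A,B,C,D \in \mathcal{K}(\mathbb{R})$,
\[
\bigl| H_d(A,B) - H_d(C,D) \bigr| \le H_d(A,C) + H_d(B,D).
\]
It is worth recording at the outset that $H_d$ is genuinely a metric on $\mathcal{K}(\mathbb{R})$, which is exactly what legitimizes this estimate.

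First I would fix the partition $P=\{(y_0,\dots,y_m): y_0 < \cdots < y_m\}$ and apply the quadrilateral inequality with $A=F_n(y_i)$, $B=F_n(y_{i-1})$, $C=F(y_i)$, $D=F(y_{i-1})$ to obtain, for each $i \in \{1,\dots,m\}$,
\[
\bigl| H_d(F_n(y_i),F_n(y_{i-1})) - H_d(F(y_i),F(y_{i-1})) \bigr| \le H_d(F_n(y_i),F(y_i)) + H_d(F_n(y_{i-1}),F(y_{i-1})) \le 2\lVert F_n - F \rVert_{\infty}.
\]
Summing over the finitely many indices $i=1,\dots,m$ and using the triangle inequality for the absolute value,
\[
\Bigl| \sum_{i=1}^m H_d(F_n(y_i),F_n(y_{i-1})) - \sum_{i=1}^m H_d(F(y_i),F(y_{i-1})) \Bigr| \le 2m\,\lVert F_n - F \rVert_{\infty},
\]
and the right-hand side tends to $0$ as $n \to \infty$ since $F_n \to F$ uniformly; this proves the first assertion.

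For \eqref{eq3.2}, I would keep the same fixed partition $P$ and note the trivial bound $\sum_{i=1}^m H_d(F_n(y_i),F_n(y_{i-1})) \le \sup_P \sum_{i=1}^m H_d(F_n(y_i),F_n(y_{i-1}))$ for every $n$. Letting $n \to \infty$, the left-hand side converges, by the first assertion, to $\sum_{i=1}^m H_d(F(y_i),F(y_{i-1}))$, while a convergent sequence is dominated by the $\liminf$ of any pointwise-larger sequence; hence
\[
\sum_{i=1}^m H_d(F(y_i),F(y_{i-1})) \le \liminf_{n\to\infty} \sup_P \sum_{i=1}^m H_d(F_n(y_i),F_n(y_{i-1})).
\]
Since the right-hand side is independent of $P$, taking the supremum over all partitions on the left yields \eqref{eq3.2}. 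There is no real obstacle here; the only point to watch is that the supremum over partitions must be taken \emph{after} the limit in $n$, because the uniform estimate $2m\lVert F_n-F\rVert_\infty$ relies on $m$ being fixed and finite, and this control is lost if one takes the supremum first.
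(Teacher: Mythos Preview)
Your proof is correct and follows essentially the same approach as the paper's own proof: both establish the first assertion from uniform convergence (you spell out the quadrilateral inequality that the paper leaves implicit), and both derive \eqref{eq3.2} by bounding the fixed-$P$ sum by the supremum, passing to the $\liminf$, and then taking the supremum over $P$ on the left. Your version is in fact slightly more explicit and careful in noting why the supremum must be taken after the limit.
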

\begin{proof}
Let $P=\{(y_0,\ldots,y_m):y_0 < \cdots< y_m\}$ be a partition of $I$. The uniform convergence of $\{F_n\}$ implies
\begin{align*}
\lim_{n \to \infty} \sum_{i=1}^{m} H_d(F_n(y_i),F_n(y_{i-1})) =~ \sum_{i=1}^{m} H_d(F(y_i),F(y_{i-1})).
\end{align*}
Now for a given partition $P=\{(y_0,\ldots,y_m):y_0 <\cdots< y_m\}$ of $I$, we get 
\begin{align*}
\sum_{i=1}^{m} H_d(F(y_i),F(y_{i-1})) = &\sum_{i=1}^{m} H_d(\lim_{n \to \infty}F_n(y_i),\lim_{n \to \infty}F_n(y_{i-1})) \\ = &   \lim_{n \to \infty} \sum_{i=1}^{m} H_d(F_n(y_i),F_n(y_{i-1}))\\ \le  & 
\liminf_{n \to \infty} \sup_{P} \sum_{i=1}^{m} H_d(F_n(y_i),F_n(y_{i-1})),
\end{align*}

completing the proof.
\end{proof}

\begin{theorem}
The space $\Big(\mathcal{BV}(I,\mathcal{K}_c(\mathbb{R})), H_{\mathcal{BV}}\Big)$ is a complete metric space, where $$ H_{\mathcal{BV}}(G,H):= \lVert G-H\rVert _{\infty}+  \sup_{P}\sum_{i=1}^{m} H_d\Big(  G(y_i)+H (y_{i-1}), H(y_i) +G(y_{i-1})\Big).$$
\end{theorem}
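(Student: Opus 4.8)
The plan is to recognize $H_{\mathcal{BV}}$ as the pullback of the classical total–variation metric under a linear model for $\mathcal{K}_c(\mathbb{R})$, and then to transfer the standard completeness of $\mathbb{R}^2$-valued $\mathrm{BV}$-functions. Every $A\in\mathcal{K}_c(\mathbb{R})$ is a closed interval $A=[a^-,a^+]$, so I would introduce $j:\mathcal{K}_c(\mathbb{R})\to\mathbb{R}^2$, $j([a^-,a^+])=(a^-,a^+)$, where $\mathbb{R}^2$ carries $\|\cdot\|_\infty$. Since $[a^-,a^+]+[c^-,c^+]=[a^-+c^-,\,a^++c^+]$ and $H_d([a^-,a^+],[b^-,b^+])=\max\{|a^--b^-|,|a^+-b^+|\}$, the map $j$ is additive for the Minkowski sum and an isometry onto the closed convex cone $\mathcal{W}:=\{(x,y)\in\mathbb{R}^2:x\le y\}$. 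For $G:I\to\mathcal{K}_c(\mathbb{R})$ set $g:=j\circ G$. Then $\|G\|_\infty=\|g\|_\infty$, and for every partition $P$ one has $\sum_i H_d(G(y_i),G(y_{i-1}))=\sum_i\|g(y_i)-g(y_{i-1})\|_\infty$, so $V(G,I)=V(g,I)$ and $G\in\mathcal{BV}$ iff $g\in\mathrm{BV}(I,\mathbb{R}^2)$. Moreover, additivity and the isometry property give
\[
H_d\big(G(y_i)+H(y_{i-1}),\,H(y_i)+G(y_{i-1})\big)=\big\|(g(y_i)-g(y_{i-1}))-(h(y_i)-h(y_{i-1}))\big\|_\infty ,
\]
whence $\sup_P\sum_i H_d(\cdots)=V(g-h,I)$ and therefore $H_{\mathcal{BV}}(G,H)=\|g-h\|_\infty+V(g-h,I)$. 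In particular $H_{\mathcal{BV}}$ is finite on $\mathcal{BV}(I,\mathcal{K}_c(\mathbb{R}))$, and being the pullback under the injection $g=j\circ G$ of the norm-induced metric on $\mathrm{BV}(I,\mathbb{R}^2)$, it is a genuine metric.

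For completeness, I would use the classical (and elementary) fact that $\big(\mathrm{BV}(I,\mathbb{R}^2),\,\|f\|_{\mathrm{BV}}:=\|f\|_\infty+V(f,I)\big)$ is a Banach space. The image of $\mathcal{BV}(I,\mathcal{K}_c(\mathbb{R}))$ under $G\mapsto j\circ G$ is $\{g\in\mathrm{BV}(I,\mathbb{R}^2):g(u)\in\mathcal{W}\ \text{for all}\ u\in I\}$, which is closed in this space, since $\mathrm{BV}$-convergence forces uniform, hence pointwise, convergence and $\mathcal{W}$ is closed in $\mathbb{R}^2$. A closed subset of a complete space is complete, and an isometric bijection transports completeness; hence $\big(\mathcal{BV}(I,\mathcal{K}_c(\mathbb{R})),H_{\mathcal{BV}}\big)$ is complete.

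If a self-contained argument avoiding the model is preferred, I would run the usual bounded-variation template directly. Given an $H_{\mathcal{BV}}$-Cauchy sequence $\{G_n\}$: from $\|G_n-G_m\|_\infty\le H_{\mathcal{BV}}(G_n,G_m)$ and completeness of $(\mathcal{K}(\mathbb{R}),H_d)$ (with $\mathcal{K}_c(\mathbb{R})$ closed in it) obtain a uniform limit $G:I\to\mathcal{K}_c(\mathbb{R})$, bounded; using the cancellation identity $H_d(A+C,B+C)=H_d(A,B)$ for $A,B,C\in\mathcal{K}_c(\mathbb{R})$ deduce $|V(G_n,I)-V(G_m,I)|\le H_{\mathcal{BV}}(G_n,G_m)$, so $\{V(G_n,I)\}$ is bounded, and then argue as in Lemma~\ref{lem2} to get $V(G,I)\le\liminf_n V(G_n,I)<\infty$, i.e. $G\in\mathcal{BV}(I,\mathcal{K}_c(\mathbb{R}))$; finally, for fixed $n$ and partition $P$, pass to the limit $m\to\infty$ inside $\sum_i H_d(G_n(y_i)+G_m(y_{i-1}),G_m(y_i)+G_n(y_{i-1}))<\varepsilon$ using continuity of $H_d$ and of Minkowski addition (Note~\ref{new71}) to conclude $H_{\mathcal{BV}}(G_n,G)\to0$. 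The triangle inequality for $H_{\mathcal{BV}}$ used here follows from the same cancellation identity, via the pointwise bound obtained by adding $K(y_i)+K(y_{i-1})$ to both entries of $H_d\big(G(y_i)+H(y_{i-1}),H(y_i)+G(y_{i-1})\big)$ and splitting through an intermediate Minkowski sum.

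The main obstacle is the very first point: because $\mathcal{K}_c(\mathbb{R})$ has no subtraction, showing that $H_{\mathcal{BV}}$ is a metric at all — in the self-contained route, the triangle inequality for the variation term — rests entirely on the cancellation law $H_d(A+C,B+C)=H_d(A,B)$ for convex sets (equivalently, the isometric additive embedding $j$), which in $\mathbb{R}$ reduces to the interval computation above. Once that is in place, completeness is the routine transfer of the classical $\mathrm{BV}$-completeness.
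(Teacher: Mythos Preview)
Your proposal is correct, and your primary route is genuinely different from the paper's. The paper argues directly in $\mathcal{BV}(I,\mathcal{K}_c(\mathbb{R}))$: from $\|F_n-F_k\|_\infty\le H_{\mathcal{BV}}(F_n,F_k)$ it extracts a uniform limit $F$, then uses Lemma~\ref{lem2} to pass to the limit inside partition sums and obtain $H_{\mathcal{BV}}(F_n,F)<\epsilon$, and finally verifies $F\in\mathcal{BV}$ via the convex cancellation law $H_d(B+D,C+D)=H_d(B,C)$ and a triangle-inequality splitting. This is precisely the ``self-contained'' template you sketch as your secondary option. Your main argument instead exploits the one-dimensional accident that $\mathcal{K}_c(\mathbb{R})$ is isometrically and additively embedded in $(\mathbb{R}^2,\|\cdot\|_\infty)$ via $[a^-,a^+]\mapsto(a^-,a^+)$, which turns $H_{\mathcal{BV}}$ into the pullback of the standard $\mathrm{BV}$-norm metric on $\mathbb{R}^2$-valued functions and reduces completeness to a closed-subspace-of-a-Banach-space statement. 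What your approach buys is that the metric axioms for $H_{\mathcal{BV}}$ (which the paper does not verify) come for free, and the argument is short and structural; it also makes transparent why convexity is essential. What the paper's approach buys is self-containedness and a shape that transfers verbatim to $\mathcal{K}_c(\mathbb{R}^n)$ or to general cones with a cancellation law, where the explicit two-coordinate model is unavailable (one would need the R{\aa}dstr\"om embedding instead).
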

\begin{proof}
Assume that $\{F_n\}$ is a Cauchy sequence in $\mathcal{BV}(I,\mathcal{K}_c( \mathbb{R}))$ with respect to $H_{\mathcal{BV}}.$ Equivalently, for $\epsilon >0$, there exists $n_0\in \mathbb{N}$ such that 
\[ H_{\mathcal{BV}}(F_n ,F_k)  < \epsilon \text{ for all } n,k \ge n_0.\]
Using the definition of $H_{\mathcal{BV}},$ we obtain $ \lVert F_n- F_k\rVert _{\infty}  < \epsilon \text{ for all } n,k \ge n_0.$ Since $(\mathcal{C}(I,\mathcal{K}_c( \mathbb{R})), \lVert .\rVert _{\infty})$ is a complete metric space, there exists a continuous function $F$ with $ \lVert F_n -F\rVert _{\infty} \to 0$ as $n \to \infty.$
We claim that $F \in \mathcal{BV}(I,\mathcal{K}_{c}( \mathbb{R}))$ and $H_{\mathcal{BV}}(F_n ,F)   \to 0$ as $n \to \infty.$ Let $P=\{(y_0,\dots,y_N):y_0 < \dots< y_N\}$ be a partition of $I$ and $n \ge n_0.$ From the reference to Lemma \ref{lem2}, we get
\begin{align*}
H_{\mathcal{BV}}(F_n,F)=&\lVert F_n -F\rVert _{\infty} +\sum_{i=1}^{m} H_d\Big(  F_n(y_i)+F (y_{i-1}), F(y_i) +F_n(y_{i-1})\Big) \\
=& \lim_{k \to \infty} \Bigg(\lVert F_n -F_k\rVert _{\infty} + \sum_{i=1}^{m} H_d\Big(  F_n(y_i)+F_k (y_{i-1}), F_k(y_i) +F_n(y_{i-1})\Big)\Bigg)\\  
\le &  \lim_{k \to \infty} \Bigg(\lVert F_n -F_k\rVert _{\infty} +\sup_{P} \sum_{i=1}^{m} H_d\Big(  F_n(y_i)+F_k (y_{i-1}), F_k(y_i) +F_n(y_{i-1})\Big)\Bigg)\\ 
\le &  \sup_{k \ge n_0} \Bigg(\lVert F_n -F_k\rVert _{\infty} +\sup_{P} \sum_{i=1}^{m} H_d\Big(  F_n(y_i)+F_k (y_{i-1}), F_k(y_i) +F_n(y_{i-1})\Big)\Bigg)\\
\leq &  \sup_{k \ge n_0} H_{\mathcal{BV}}(F_n,F_k)< \epsilon.
 \end{align*}
Since $P$ was arbitrary, therefore we have $H_{\mathcal{BV}}(F_n,F) < \epsilon ~ \text{ for all } ~n \ge n_0.$ \\
It remains to show that $F \in \mathcal{BV}(I, \mathcal{K}_{c}(\mathbb{R})).$
Now by using $H_d(B+D,C+D)=H_d(B,C)$ for every $B,C,D \in \mathcal{K}_c( \mathbb{R})$ (see for instance \cite{hu1997handbook}), we have
\begin{align*}
    &\sum_{i=1}^{m} H_d(F(y_i),F(y_{i-1}))\\
   =&\sum_{i=1}^{m} H_d(F(y_i)+F_n(y_{i-1}),F(y_{i-1})+F_n(y_{i-1}))\\
  \leq&\sum_{i=1}^{m}H_d(F(y_i)+F_n(y_{i-1}),F(y_{i-1})+F_n(y_{i}))+\sum_{i=1}^{m} H_d(F_n(y_i)+F(y_{i-1}),F(y_{i-1})+F_n(y_{i-1}))\\
  \leq&\sum_{i=1}^{m}H_d(F(y_i)+F_n(y_{i-1}),F(y_{i-1})+F_n(y_{i}))+\sum_{i=1}^{m} H_d(F_n(y_i),F_n(y_{i-1}))
  \leq H_{\mathcal{BV}}(F_n,F)+\lVert F_n \rVert_{\mathcal{BV}}.
\end{align*}
Since $H_{\mathcal{BV}}(F_n,F) < \epsilon$ and $F_n \in \mathcal{BV}(I,\mathcal{K}_c( \mathbb{R}))$, the above inequality yields that $F \in \mathcal{BV}(I, \mathcal{K}_c( \mathbb{R})).$
This completes the proof.
\end{proof}
\begin{theorem} \label{bounded2}
Consider $F \in \mathcal{BV}(I, \mathcal{K}_{c}( \mathbb{R}) )$, $\Delta$ as defined in Theorem \ref{Thm3.2}, $S \in \mathcal{BV}(I, \mathcal{K}_{c}( \mathbb{R}) )$ such that $S(u_1)-F(u_1)=~S(u_N)-F(u_N)$, and $\alpha \in (-1,1)$  with $\lvert \alpha \rvert< \frac{1}{N}.$ Then, $\alpha$-fractal function, $F^{\alpha}$ corresponding to $F$ is of bounded variation on $I$.
\end{theorem}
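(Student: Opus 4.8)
The plan is to reprise the Read--Bajraktarevi\'c fixed-point scheme of Theorems \ref{Thm3.2} and \ref{Thm3.5}, but now inside the complete metric space of bounded-variation maps. Set
\[\mathcal{BV}_F(I,\mathcal{K}_c(\mathbb{R})):=\{G\in\mathcal{BV}(I,\mathcal{K}_c(\mathbb{R})):G(u_1)-S(u_1)=G(u_N)-S(u_N)\}.\]
First I would check that this is a closed subset of $(\mathcal{BV}(I,\mathcal{K}_c(\mathbb{R})),H_{\mathcal{BV}})$: convergence in $H_{\mathcal{BV}}$ forces uniform convergence, which carries the two endpoint constraints over to the limit. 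Hence, by the preceding completeness theorem, $\big(\mathcal{BV}_F(I,\mathcal{K}_c(\mathbb{R})),H_{\mathcal{BV}}\big)$ is itself a complete metric space; note also that $F\in\mathcal{BV}_F(I,\mathcal{K}_c(\mathbb{R}))$ since $F\in\mathcal{BV}$ and negating the hypothesis $S(u_1)-F(u_1)=S(u_N)-F(u_N)$ gives $F(u_1)-S(u_1)=F(u_N)-S(u_N)$.

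Next I would show that the RB operator $(\Phi G)(u)=F(u)+\alpha\,[G(L_n^{-1}(u))-S(L_n^{-1}(u))]$ for $u\in I_n$, $n\in J$, maps $\mathcal{BV}_F(I,\mathcal{K}_c(\mathbb{R}))$ into itself. Well-definedness and the endpoint identity for $\Phi G$ follow exactly as in Theorem \ref{Thm3.2}. For membership in $\mathcal{BV}$, I would use additivity of total variation over the subintervals $I_n$, the fact that each $L_n^{-1}\colon I_n\to I$ is a monotone bijection so that $V(G\circ L_n^{-1},I_n)=V(G,I)$ and $V(S\circ L_n^{-1},I_n)=V(S,I)$, and the subadditivity $H_d(B+C,B'+C')\le H_d(B,B')+H_d(C,C')$ together with $H_d(\lambda B,\lambda B')=|\lambda|H_d(B,B')$ from Note \ref{new71}. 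Applying these termwise on each $I_n$ and summing over $n\in J$ yields
\[V(\Phi G,I)\le V(F,I)+(N-1)\,|\alpha|\,\big(V(G,I)+V(S,I)\big)<\infty,\]
and $\|\Phi G\|_\infty\le\|F\|_\infty+|\alpha|\big(\|G\|_\infty+\|S\|_\infty\big)<\infty$, so $\Phi G\in\mathcal{BV}_F(I,\mathcal{K}_c(\mathbb{R}))$.

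The heart of the argument is the contraction estimate. Given $G,H\in\mathcal{BV}_F(I,\mathcal{K}_c(\mathbb{R}))$ and a partition of $I$ refined so as to contain every node $u_n$, I would expand $(\Phi G)(y_i)+(\Phi H)(y_{i-1})$ and $(\Phi H)(y_i)+(\Phi G)(y_{i-1})$ on each $I_n$; the contributions from $F$ and from $\alpha S$ are identical on both sides, so the cancellation law $H_d(B+D,C+D)=H_d(B,C)$ on $\mathcal{K}_c(\mathbb{R})$ (cf. \cite{hu1997handbook}) deletes them and leaves
\[H_d\big((\Phi G)(y_i)+(\Phi H)(y_{i-1}),(\Phi H)(y_i)+(\Phi G)(y_{i-1})\big)=|\alpha|\,H_d\big(G(z_i)+H(z_{i-1}),H(z_i)+G(z_{i-1})\big),\]
with $z_j=L_n^{-1}(y_j)$. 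Summing over the nodes inside $I_n$ (the $z_j$ then run through a partition of $I$), over $n\in J$, and combining with $\|\Phi G-\Phi H\|_\infty\le|\alpha|\,\|G-H\|_\infty$ gives
\[H_{\mathcal{BV}}(\Phi G,\Phi H)\le (N-1)\,|\alpha|\,H_{\mathcal{BV}}(G,H).\]
Since $|\alpha|<\tfrac1N\le\tfrac1{N-1}$, $\Phi$ is a contraction on the complete space $\mathcal{BV}_F(I,\mathcal{K}_c(\mathbb{R}))$ and so has a unique fixed point there; as Theorem \ref{Thm3.2} shows that $F^{\alpha}$ is the only fixed point of $\Phi$ in $\mathcal{C}(I,\mathcal{K}(\mathbb{R}))$, that fixed point must be $F^{\alpha}$, whence $F^{\alpha}\in\mathcal{BV}(I,\mathcal{K}_c(\mathbb{R}))$.

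The step I expect to need the most care is this last estimate in the metric-linear-combination seminorm $\sup_P\sum_i H_d\big(G(y_i)+H(y_{i-1}),H(y_i)+G(y_{i-1})\big)$: one must justify reducing to partitions refining $\{u_1,\dots,u_N\}$, track that $L_n^{-1}$ carries a partition of $I_n$ to a partition of $I$, and apply the cancellation law, which is precisely where convexity of the images (i.e. working in $\mathcal{K}_c(\mathbb{R})$) is used. A cheaper alternative, bypassing a fresh contraction, would be to iterate $\Phi^k F$ from $F\in\mathcal{BV}_F(I,\mathcal{K}_c(\mathbb{R}))$, bound $\|\Phi^k F\|_{\mathcal{BV}}$ uniformly in $k$ by iterating the variation estimate above (the recursion $V_{k+1}\le V(F,I)+(N-1)|\alpha|\,(V_k+V(S,I))$ with $(N-1)|\alpha|<1$ stays bounded), and then pass to the limit via the lower semicontinuity of total variation under uniform convergence established in Lemma \ref{lem2}.
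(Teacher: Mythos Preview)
Your proposal is correct and follows essentially the same route as the paper: restrict the RB operator to the closed set $\{G\in\mathcal{BV}(I,\mathcal{K}_c(\mathbb{R})):G(u_1)-S(u_1)=G(u_N)-S(u_N)\}$, use the cancellation law on $\mathcal{K}_c(\mathbb{R})$ to collapse the mixed-variation term on each $I_n$, sum over the $N-1$ subintervals, and apply Banach's fixed-point theorem. Your contraction constant $(N-1)\lvert\alpha\rvert$ is in fact slightly sharper than the paper's $N\lvert\alpha\rvert$, and you add two details the paper leaves implicit (the verification that $\Phi G\in\mathcal{BV}$ and the identification of the fixed point with the $F^{\alpha}$ of Theorem~\ref{Thm3.2}); the alternative iteration-plus-lower-semicontinuity argument you sketch at the end is not used in the paper but would also work.
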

\begin{proof}
Consider $\mathcal{BV}_*(I, \mathcal{K}_{c}( \mathbb{R}) )= \{G \in \mathcal{BV}(I, \mathcal{K}_{c}( \mathbb{R}) ):G(u_1)-S(u_1)=~G(u_N)-S(u_N)\}.$ It is easy to prove that $\mathcal{BV}_*(I, \mathcal{K}_{c}( \mathbb{R}))$ is a closed subset of $\mathcal{BV}(I, \mathcal{K}_{c}( \mathbb{R})),$ hence complete with respect to metric $H_{\mathcal{BV}}$.
Define RB operator $\Phi: \mathcal{BV}_*(I, \mathcal{K}_{c}( \mathbb{R})) \rightarrow \mathcal{BV}_*(I, \mathcal{K}_{c}( \mathbb{R})) $ by 
\[(\Phi G)(u)= F(u) + \alpha  \big[G\big(L_j^{-1}(u)\big) - S\big(L_j^{-1}(u)\big)\big]\]
for each $u \in I_j$ and $j \in J.$ It is easy to observe the well-definedness of $\Phi$. Assume $ P=\{(t_0,\ldots ,t_m):t_0<\cdots <t_m\} $ is a partition of $I_j,$ where $m \in \mathbb{N}.$ We have

\begin{align*}
  &H_d\Big(\Phi(G)(t_i)+\Phi(H)(t_{i-1}),\Phi(H)(t_i)+\Phi(G)(t_{i-1})\Big)\\ \le & ~ H_d\Big( \alpha G\big(L_j^{-1}(t_i)\big)+\alpha H \big(L_j^{-1}(t_{i-1})\big), \alpha H\big(L_j^{-1}(t_i)\big) +\alpha G\big(L_j^{-1}(t_{i-1})\big)\Big)\\
  \le & ~ \lvert  \alpha\rvert  H_d\Big(  G\big(L_j^{-1}(t_i)\big)+H \big(L_j^{-1}(t_{i-1})\big), H\big(L_j^{-1}(t_i)\big) +G\big(L_j^{-1}(t_{i-1})\big)\Big).
 \end{align*}

 Summing over $i=1 $ to $m,$ we have
 \begin{align*}
      & \sum_{i=1}^{m} H_d\Big(\Phi(G)(t_i)+\Phi(H)(t_{i-1}),\Phi(H)(t_i)+\Phi(G)(t_{i-1})\Big) \\ 
      \le& \lvert \alpha \rvert \sum_{i=1}^{m} H_d\Big(  G\big(L_j^{-1}(t_i)\big)+H \big(L_j^{-1}(t_{i-1})\big), H\big(L_j^{-1}(t_i)\big) +G\big(L_j^{-1}(t_{i-1})\big)\Big) \\
      \le &\lvert \alpha \rvert \sup_{P}\sum_{i=1}^{m} H_d\Big(  G(t_i)+H (t_{i-1}), H(t_i) +G(t_{i-1})\Big).
 \end{align*}
 The above inequality is true for any partition of $I_j$. Hence, we get

 \begin{align*}
 &H_{\mathcal{BV}}(\Phi(G),\Phi(H)) \\
 =& \sup_{u \in I}H_d(\Phi(G)(u),\Phi(H)(u))+ \sup_{P}\sum_{i=1}^{m} H_d\Big(  \Phi G(t_i)+\Phi H (t_{i-1}), \Phi H(t_i) +\Phi G(t_{i-1})\Big)\\ 
 \le& \lvert \alpha \rvert \sup_{u \in I}H_d(G(u),H(u))+ N \lvert \alpha \rvert \sup_{P}\sum_{i=1}^{m} H_d\Big(  G(t_i)+H (t_{i-1}), H(t_i) +G(t_{i-1})\Big)\\ 
 \le & N \lvert \alpha \rvert H_{\mathcal{BV}}(G,H).
 \end{align*} 
 
As $\lvert \alpha \rvert < \frac{1}{N},$  $\Phi$ is a contraction map. Then, Banach fixed point theorem ensures that $\Phi$ has a unique fixed point, say $F^{\alpha}$. Further, this fixed point will satisfy the following self-referential equation, 
\[ F^{\alpha}(u)= F(u) + \alpha  \big[F^{\alpha}\big(L_j^{-1}(u)\big) - S \big(L_j^{-1}(u)\big)\big]~ \text{for each}~   u \in I_j, ~\text{where}~ j \in J.\]
 \end{proof}

Notice that function $F^{\alpha}$ is a parametric function depending on parameters, base function $S$, scaling function $\alpha$, partition $\Delta$ and the function $F$ itself. To observe collective behavior of $F^{\alpha}$ depending on some such parameters we define a set-valued map, $\mathcal{F}^{\alpha}_{S}: \mathcal{C}(I, \mathcal{K}(\mathbb{R})) \rightarrow \mathcal{C}(I, \mathcal{K}(\mathbb{R}))$ such that 
\begin{equation}\label{fractope}
    \mathcal{F}^{\alpha}_{S}(F)=F^{\alpha},~ \text{where}~ \alpha \in (-1,1).
\end{equation}
This map is known as fractal operator.
  
\begin{theorem}\label{fractoperator}
$\mathcal{F}^{\alpha}_{S}$ defined in \eqref{fractope} is a continuous map.
\end{theorem}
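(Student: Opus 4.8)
The plan is to prove the stronger statement that $\mathcal{F}^{\alpha}_{S}$ is Lipschitz continuous with constant $\frac{1}{1-\lvert\alpha\rvert}$; continuity is then immediate. Fix two maps $F,\widetilde F$ in the domain of $\mathcal{F}^{\alpha}_{S}$ (i.e.\ satisfying $S(u_1)-F(u_1)=S(u_N)-F(u_N)$ for the prescribed base function $S$) and set $F^{\alpha}=\mathcal{F}^{\alpha}_{S}(F)$ and $\widetilde F^{\alpha}=\mathcal{F}^{\alpha}_{S}(\widetilde F)$ as in \eqref{fractope}. By Theorem~\ref{Thm3.2}, for every $n\in J$ and every $u\in I_n$ one has
\[F^{\alpha}(u)=F(u)+\alpha\big[F^{\alpha}(L_n^{-1}(u))-S(L_n^{-1}(u))\big],\qquad \widetilde F^{\alpha}(u)=\widetilde F(u)+\alpha\big[\widetilde F^{\alpha}(L_n^{-1}(u))-S(L_n^{-1}(u))\big].\]

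Next I would estimate $H_d\big(F^{\alpha}(u),\widetilde F^{\alpha}(u)\big)$ for $u\in I_n$. Writing $a:=L_n^{-1}(u)\in I$ and regrouping the Minkowski sums as $\big(F(u)+(-\alpha S(a))\big)+\alpha F^{\alpha}(a)$ and $\big(\widetilde F(u)+(-\alpha S(a))\big)+\alpha\widetilde F^{\alpha}(a)$, two applications of the subadditivity in Note~\ref{new71}(1) — first separating the $\alpha F^{\alpha}(a)$ versus $\alpha\widetilde F^{\alpha}(a)$ parts, then separating $F(u)$ versus $\widetilde F(u)$ and leaving the common summand $-\alpha S(a)$, whose contribution is $H_d(-\alpha S(a),-\alpha S(a))=0$ — together with the homogeneity in Note~\ref{new71}(2), give
\[H_d\big(F^{\alpha}(u),\widetilde F^{\alpha}(u)\big)\le H_d\big(F(u),\widetilde F(u)\big)+\lvert\alpha\rvert\,H_d\big(F^{\alpha}(a),\widetilde F^{\alpha}(a)\big)\le \lVert F-\widetilde F\rVert_{\infty}+\lvert\alpha\rvert\,\lVert F^{\alpha}-\widetilde F^{\alpha}\rVert_{\infty}.\]
The right-hand side is independent of $u$ and $n$, so taking the supremum over $u\in I$ yields $\lVert F^{\alpha}-\widetilde F^{\alpha}\rVert_{\infty}\le \lVert F-\widetilde F\rVert_{\infty}+\lvert\alpha\rvert\,\lVert F^{\alpha}-\widetilde F^{\alpha}\rVert_{\infty}$. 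Since $F^{\alpha}$ and $\widetilde F^{\alpha}$ are continuous on the compact interval $I$, their images are bounded in $(\mathcal{K}(\mathbb{R}),H_d)$, hence $\lVert F^{\alpha}-\widetilde F^{\alpha}\rVert_{\infty}<\infty$ and the last term can be absorbed on the left; using $\lvert\alpha\rvert<1$ one obtains $\lVert\mathcal{F}^{\alpha}_{S}(F)-\mathcal{F}^{\alpha}_{S}(\widetilde F)\rVert_{\infty}\le \frac{1}{1-\lvert\alpha\rvert}\lVert F-\widetilde F\rVert_{\infty}$, which proves the theorem.

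The only delicate point is the set-algebra bookkeeping in the pointwise step: one must arrange the Minkowski sums so that the $-\alpha S(L_n^{-1}(u))$ terms occupy the same slot when Note~\ref{new71}(1) is applied, so that they cancel rather than producing an uncontrolled term — this is precisely the device already used in the contraction estimate inside the proof of Theorem~\ref{Thm3.2}, and no convexity of the sets is needed. A minor point worth flagging in a remark: strictly speaking $\mathcal{F}^{\alpha}_{S}$ is defined on the affine subspace of those $F$ compatible with the fixed $S$; if instead one lets $S=S_F$ depend on $F$ through one of the formulas in examples (i)--(ii) above, the same argument still applies once one additionally invokes the (evident) continuity of the assignment $F\mapsto S_F$ furnished by those explicit expressions.
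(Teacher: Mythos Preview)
Your proof is correct and follows essentially the same route as the paper: both derive the pointwise estimate $H_d(F^{\alpha}(u),\widetilde F^{\alpha}(u))\le H_d(F(u),\widetilde F(u))+\lvert\alpha\rvert\,\lVert F^{\alpha}-\widetilde F^{\alpha}\rVert_{\infty}$ from the self-referential equation via Note~\ref{new71}, take the supremum, and absorb to get the Lipschitz bound $\frac{1}{1-\lvert\alpha\rvert}$. The only cosmetic difference is that the paper casts the argument in sequential form (take $F_k\to F$, show $F_k^{\alpha}\to F^{\alpha}$), whereas you state the Lipschitz inequality directly for an arbitrary pair $F,\widetilde F$; your added care about the finiteness of $\lVert F^{\alpha}-\widetilde F^{\alpha}\rVert_{\infty}$ and about the domain of $\mathcal{F}^{\alpha}_{S}$ is a welcome refinement.
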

\begin{proof}
Let $\{F_{k}\}$ be a sequence in $\mathcal{C}(I, \mathcal{K}(\mathbb{R}))$ such that $F_{k}\to F$, then to prove $\mathcal{F}^{\alpha}_{S}$ is a continuous function, it is sufficient to prove that $F^{\alpha}_{n} \to F_{\alpha}$. Since $F_{n}\to F$, then for each $\epsilon$ there exists $n_0\in \mathbb{N}$ such that,
\begin{align*}
    \lVert F_n-F\rVert _{\infty} &< \epsilon (1-\lvert \alpha \rvert)~ \text{for all}~ n\geq n_0,\\
\text{equivalently, }\underset{u\in I}{\sup} H_{d}(F_n(u),F(u)) &< \epsilon (1-\lvert \alpha \rvert)  \text{ for all } n\geq n_0.
\end{align*}
Now, we have
\begin{align*}
&H_{d}(F^{\alpha}_{n}(x),F^{\alpha}(x))\\
=&H_{d}\bigg(F_{n}(x)+\alpha\left[F^{\alpha}_{n}(L^{-1}_{j}(x))-S(L^{-1}_{j}(x))\right], F(x)+\alpha\left[F^{\alpha}(L^{-1}_{j}(x))-S(L^{-1}_{j}(x))\right]\bigg)\\
\leq & H_{d}(F_{n}(x),F(x))+\lvert \alpha \rvert H_d(F^{\alpha}_{n}(L^{-1}_{j}(x)),F^{\alpha}(L^{-1}_{j}(x))).
\end{align*}
This implies,
\begin{align*}
\underset{x\in I}{\sup}~H_d(F^{\alpha}_{n}(x), F^{\alpha}(x)) &\leq \frac{1}{1-\lvert \alpha \rvert}~\underset{x\in I}{\sup}~H_d(F_{n}(x),F(x)),\\
\text{that is }\lVert F^{\alpha}_{n}(x)-F^{\alpha}(x)\rVert_{\infty} &< \epsilon ~\text{for all } n\geq n_0.
\end{align*}
This completes the proof.
\end{proof}

\begin{theorem}
For a fixed partition $\Delta$, the mapping $\mathcal{T}^{\Delta}_{S}:\mathcal{C}(I, \mathcal{K}(\mathbb{R})) \rightrightarrows \mathcal{C}(I, \mathcal{K}(\mathbb{R}))$ defined as,
\[\mathcal{T}^{\Delta}_{S}(F)=\left\{F^{\alpha}: \alpha \in (-1,1)\right\}\] 
is lower semi-continuous.
\end{theorem}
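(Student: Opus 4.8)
The plan is to verify lower semicontinuity directly from the definition: given $F_k \to F$ in $\mathcal{C}(I,\mathcal{K}(\mathbb{R}))$ and a target $G \in \mathcal{T}^{\Delta}_S(F)$, I must produce elements $G_k \in \mathcal{T}^{\Delta}_S(F_k)$ with $G_k \to G$. Since $G \in \mathcal{T}^{\Delta}_S(F)$, there is a scaling factor $\alpha \in (-1,1)$ with $G = F^{\alpha}$, the $\alpha$-fractal function of $F$ (with respect to the fixed $\Delta$ and the base function $S$, as in Theorem \ref{Thm3.2}). The natural choice is $G_k := F_k^{\alpha}$, the $\alpha$-fractal function of $F_k$ for the \emph{same} parameter $\alpha$; this clearly lies in $\mathcal{T}^{\Delta}_S(F_k)$. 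So the whole statement reduces to showing $F_k^{\alpha} \to F^{\alpha}$ whenever $F_k \to F$, with $\alpha$ held fixed.

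But that convergence is precisely the content of the proof of Theorem \ref{fractoperator}: there it is shown that for fixed $\alpha$,
\[
\sup_{x\in I} H_d\big(F_k^{\alpha}(x),F^{\alpha}(x)\big) \le \frac{1}{1-\lvert\alpha\rvert}\,\sup_{x\in I} H_d\big(F_k(x),F(x)\big) = \frac{1}{1-\lvert\alpha\rvert}\,\lVert F_k - F\rVert_{\infty},
\]
which follows by writing both $F_k^{\alpha}$ and $F^{\alpha}$ through their self-referential equations, applying the subadditivity and positive homogeneity of $H_d$ from Note \ref{new71}, and taking the supremum. Hence $\lVert F_k^{\alpha} - F^{\alpha}\rVert_{\infty} \to 0$, i.e. $G_k \to G$ in $\mathcal{C}(I,\mathcal{K}(\mathbb{R}))$, which is exactly what lower semicontinuity at $F$ demands. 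Since $F$ was arbitrary, $\mathcal{T}^{\Delta}_S$ is lower semicontinuous.

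The argument is essentially routine once one notices that the selection $\alpha \mapsto F_k^{\alpha}$ is available for free and that Theorem \ref{fractoperator} already does the analytic work. The only point requiring a little care is bookkeeping: one should note that for the fixed $\alpha$ attached to the given $G$, the estimate above is uniform in $k$ (the constant $\frac{1}{1-\lvert\alpha\rvert}$ depends only on $\alpha$, not on $k$), so a single $\alpha$ serves for the whole approximating sequence; there is no need to let $\alpha$ vary with $k$. I do not expect a genuine obstacle here — the main thing is simply to phrase the reduction cleanly and invoke the estimate from the proof of Theorem \ref{fractoperator}.
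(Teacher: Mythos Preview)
Your proposal is correct and follows essentially the same approach as the paper: pick the same scaling factor $\alpha$ for the approximating sequence, note that $F_k^{\alpha}\in\mathcal{T}^{\Delta}_S(F_k)$, and invoke Theorem~\ref{fractoperator} to get $F_k^{\alpha}\to F^{\alpha}$. The paper's proof is just a terser version of exactly this argument.
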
  
  
\begin{proof}
Let $F \in \mathcal{C}(I, \mathcal{K}(\mathbb{R}))$ and let $F^{\alpha}\in \mathcal{T}^{\Delta}_{S}(F)$ and a sequence $F_{k}\in \mathcal{C}(I, \mathcal{K}(\mathbb{R}))$ such that $F_{k} \to F$. Using Theorem \ref{fractoperator}, we have $F^{\alpha}_{k}\to F^{\alpha}$, then clearly $F^{\alpha}_{k}\in \mathcal{T}^{\Delta}_{S}(F_k)$, establishing the result.
\end{proof}

\section{Approximation of set-valued functions}\label{sec4}
 In Section \ref{sec3}, we observe that $F^{\alpha}$ satisfies the following self-referential equation:
 \[F^{\alpha}(u)= F(u) + \alpha  \left[F^{\alpha}\left(L_j^{-1}(u)\right) - S\left(L_j^{-1}(u)\right)\right]\]
 for every $ u \in I_j$, where $j \in J$.\\
 Let us note the next result as a prelude.
 \begin{proposition}\label{new79}
 Between $F$ and $F^{\alpha}$, the following perturbation error will be obtained:
   \[\lVert F^{\alpha} -F \rVert _{\infty} \leq \frac{\lvert \alpha \rvert}{1- \lvert \alpha \rvert} \lVert F-S \rVert _{\infty}+\frac{2 \lvert \alpha \rvert}{1- \lvert \alpha \rvert} \lVert F \rVert_{\infty}.\]
 \end{proposition}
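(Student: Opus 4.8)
The plan is to use the self-referential equation for $F^\alpha$ directly, together with the two properties of the Hausdorff metric recorded in Note \ref{new71}, namely subadditivity under Minkowski sums and homogeneity under scalar multiplication. The starting point is that for $u \in I_j$ one has
\[
F^\alpha(u) = F(u) + \alpha\big[F^\alpha(L_j^{-1}(u)) - S(L_j^{-1}(u))\big],
\]
so that, writing $v = L_j^{-1}(u) \in I$, the quantity $H_d(F^\alpha(u), F(u))$ should first be bounded by $H_d\big(\alpha F^\alpha(v) - \alpha S(v), \{0\}\big)$ — using that $H_d(A + C, B + C) \le H_d(A,B)$ when we take $C = F(u)$, $A = \alpha[F^\alpha(v) - S(v)]$, $B = \{0\}$ (here I am implicitly using that $\{0\}$ is the additive identity, or more carefully splitting the error through $F(u)$). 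Then by the triangle inequality for $H_d$ and homogeneity,
\[
H_d(F^\alpha(u),F(u)) \le |\alpha|\, H_d(F^\alpha(v), S(v)) \le |\alpha|\big[ H_d(F^\alpha(v),F(v)) + H_d(F(v),S(v)) \big].
\]

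Next I would take the supremum over $u \in I$ on the left; since $v = L_j^{-1}(u)$ ranges over all of $I$ as $u$ ranges over $I_j$ and $j$ ranges over $J$, the right-hand side is bounded by $|\alpha|\big[\lVert F^\alpha - F\rVert_\infty + \lVert F - S\rVert_\infty\big]$. This gives the inequality
\[
\lVert F^\alpha - F\rVert_\infty \le |\alpha|\, \lVert F^\alpha - F\rVert_\infty + |\alpha|\, \lVert F - S\rVert_\infty,
\]
and since $|\alpha| < 1$ we may absorb the first term on the right into the left-hand side, obtaining $\lVert F^\alpha - F\rVert_\infty \le \frac{|\alpha|}{1-|\alpha|}\lVert F - S\rVert_\infty$. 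Strictly speaking, to run this argument one must first know $\lVert F^\alpha - F\rVert_\infty < \infty$, which is immediate since both $F^\alpha$ and $F$ lie in $\mathcal{C}(I,\mathcal{K}(\mathbb{R}))$ on the compact interval $I$ and hence are bounded.

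To reach the stated form of the bound, which has an extra term $\frac{2|\alpha|}{1-|\alpha|}\lVert F\rVert_\infty$, I would not use $H_d(F^\alpha(v),S(v)) \le H_d(F^\alpha(v),F(v)) + H_d(F(v),S(v))$ but instead route the estimate so that $S$ only ever appears paired against $F$ via $H_d(F,S)$ and separately bound a leftover $H_d(\cdot,\{0\})$ term by $\lVert F\rVert_\infty$; concretely one writes $H_d(\alpha F^\alpha(v) - \alpha S(v), \{0\}) \le |\alpha| H_d(F^\alpha(v) - S(v) + F(v), F(v)) \le |\alpha|\big(H_d(F^\alpha(v),F(v)) + \ldots\big)$ and expands so that two copies of $\lVert F\rVert_\infty$ appear — one from replacing $F^\alpha(v)$ by $F^\alpha(v)-F(v)+F(v)$ and controlling the stray $F(v)$, another analogous one. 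The only real subtlety is bookkeeping with the Hausdorff metric: one must be careful that the cancellation identity $H_d(A+C,B+C)=H_d(A,B)$ is being applied to genuine Minkowski sums, and that is exactly the place where the computation could go wrong, so I would write that step out in full rather than eliding it. Everything else is the geometric-series absorption of $|\alpha|\lVert F^\alpha-F\rVert_\infty$, which is routine.
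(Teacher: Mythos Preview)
Your first computation contains a genuine gap at the step
\[
|\alpha|\,H_d\big(F^\alpha(v)-S(v),\{0\}\big)\ \le\ |\alpha|\,H_d\big(F^\alpha(v),S(v)\big).
\]
For compact sets this inequality is \emph{false}: with $A=B=[0,1]$ one has $H_d(A-B,\{0\})=H_d([-1,1],\{0\})=1$ while $H_d(A,B)=0$. In fact the sharper bound $\lVert F^\alpha-F\rVert_\infty\le \frac{|\alpha|}{1-|\alpha|}\lVert F-S\rVert_\infty$ that your argument would produce is not valid in the set-valued setting: take $F=S\equiv[0,1]$ (constant) on $I$, so that $\lVert F-S\rVert_\infty=0$; one checks directly from the self-referential equation that $F^\alpha\ne F$ (for $\alpha=1/2$ the fixed point is the constant $[-1,2]$), so the sharper inequality fails. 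The extra term $\frac{2|\alpha|}{1-|\alpha|}\lVert F\rVert_\infty$ is therefore not cosmetic but essential.

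Your second paragraph misidentifies where the difficulty lies: the triangle inequality $H_d(F^\alpha(v),S(v))\le H_d(F^\alpha(v),F(v))+H_d(F(v),S(v))$ is perfectly fine --- the problem is reaching $H_d(F^\alpha(v),S(v))$ in the first place. The correct route, and the one the paper takes, is to stay with $H_d\big(F^\alpha(v)-S(v),\{0\}\big)$ and apply the \emph{triangle inequality} with the intermediate set $F(v)-F(v)$ (which is \emph{not} $\{0\}$ for sets):
\[
H_d\big(F^\alpha(v)-S(v),\{0\}\big)\le H_d\big(F^\alpha(v)-S(v),\,F(v)-F(v)\big)+H_d\big(F(v)-F(v),\{0\}\big).
\]
Now subadditivity (Note~\ref{new71}) bounds the first summand by $H_d(F^\alpha(v),F(v))+H_d(S(v),F(v))$ and the second by $2H_d(F(v),\{0\})\le 2\lVert F\rVert_\infty$. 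Taking suprema and absorbing $|\alpha|\lVert F^\alpha-F\rVert_\infty$ then gives the stated bound.
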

 \begin{proof}
 Using the self-referential equation and Note \ref{new71}, we get
\begin{align*}
   & H_d(F^{\alpha}(u), F(u))\\
   =& H_d\bigg(F(u) + \alpha  \left[F^{\alpha}\left(L_j^{-1}(u)\right) -                 S\left(L_j^{-1}(u)\right)\right], ~F(u)\bigg)\\ 
   \leq &  H_d\bigg( \alpha  \left[F^{\alpha}\left(L_j^{-1}(u)\right) -        S\left(L_j^{-1}(u)\right)\right], ~\{0\}\bigg)=\lvert \alpha \rvert H_d\bigg(F^{\alpha}\left(L_j^{-1}(u)\right)- S\left(L_j^{-1}(u)\right), ~\{0\}\bigg)\\
   \end{align*}
   \begin{align*}
    \leq&  \lvert \alpha \rvert H_d\bigg(F^{\alpha}\left(L_j^{-1}(u)\right) -  S\left(L_j^{-1}(u)\right),~ F\left(L_j^{-1}(u)\right)-F\left(L_j^{-1}(u)\right)\bigg)\\
    &\hspace{7cm}+\lvert \alpha \rvert H_d\bigg(F\left(L_j^{-1}(u)\right)-F\left(L_j^{-1}(u)\right), \{0\}\bigg)\\
    \leq &\lvert \alpha \rvert H_d\bigg(F^{\alpha}\left(L_j^{-1}(u)\right),  F\left(L_j^{-1}(u)\right)\bigg)+\lvert \alpha \rvert H_d\bigg(- S\left(L_j^{-1}(u)\right),-F\left(L_j^{-1}(u)\right)\bigg)\\
    &\hspace{7cm}+2 \lvert \alpha \rvert H_d\bigg(F\left(L_j^{-1}(u)\right),\{0\}\bigg)\\
     \leq& \lvert \alpha \rvert \sup_{u\in I_j, j \in J}H_d\bigg(   F^{\alpha}\left(L_j^{-1}(u)\right),  F\left(L_j^{-1}(u)\right)\bigg)+ \lvert \alpha \rvert \sup_{u\in I_j, j \in J}
     H_d\bigg(S\left(L_j^{-1}(u)\right),F\left(L_j^{-1}(u)\right)\bigg)\\
     &+2 \lvert \alpha \rvert \sup_{u \in I_j, j \in J}H_d\left(   F\left(L_j^{-1}(u)\right),\{0\}\right)\\
     \leq& \lvert \alpha \rvert \lVert F^{\alpha}-F \rVert _{\infty}+ \lvert \alpha \rvert \lVert F-S \rVert _{\infty}+2 \lvert \alpha \rvert  \lVert F \rVert_{\infty}.
   \end{align*}
 This in turn yields $\lVert F^{\alpha}-F \rVert _{\infty}\le  \lvert \alpha \rvert \lVert F^{\alpha}-F \rVert _{\infty}+ \lvert \alpha \rvert \lVert F-S \rVert _{\infty}+2 \lvert \alpha \rvert  \lVert F \rVert_{\infty}$. This establishes the proof.
 \end{proof}
 
   \begin{theorem} \label{BFOTHM5}
Consider $F \in \mathcal{C}(I , \mathcal{K}_{c}( \mathbb{R})).$ 
 For any $\epsilon>0$, there is a set-valued fractal polynomial $P^{\alpha}$  such that
 \[ \lVert F-P^{\alpha}\rVert_{\infty} <\epsilon .\]
 \end{theorem}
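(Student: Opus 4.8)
The plan is to combine a Weierstrass-type approximation of $F$ by set-valued polynomials with the perturbation estimate of Proposition~\ref{new79}.

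\textbf{Step 1 (polynomial approximation).} Since $F\in\mathcal{C}(I,\mathcal{K}_c(\mathbb{R}))$ has convex values, I would invoke the classical Bernstein/Korovkin approximation theorem for convex set-valued maps under the Minkowski sum (Vitale~\cite{vitale1978approximation}) to obtain a set-valued polynomial $Q$ --- say $Q(u)=\sum_{k=0}^{d}A_k u^k$ with $A_k\in\mathcal{K}_c(\mathbb{R})$, or simply the set-valued Bernstein polynomial of $F$ --- with $\lVert F-Q\rVert_{\infty}<\tfrac{\epsilon}{2}$. In one variable this is in fact elementary: writing $F(u)=[f^-(u),f^+(u)]$ with $f^-\le f^+$ continuous, the scalar Bernstein polynomials satisfy $B_nf^-\le B_nf^+$ and converge uniformly to $f^\pm$, and since $H_d$ of two intervals equals the maximum of the distances between their endpoints, the interval-valued polynomial $[B_nf^-,B_nf^+]$ converges to $F$ in $\lVert\cdot\rVert_{\infty}$.

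\textbf{Step 2 (attaching the fractal structure).} Fix a base function $S$ compatible with the germ $Q$ in the sense required by Theorem~\ref{Thm3.2}, i.e. $S(u_1)-Q(u_1)=S(u_N)-Q(u_N)$; one of the explicit templates displayed after Note~\ref{Nt3.4} works, and in fact one can take $S$ itself to be a set-valued polynomial, since the constraint amounts to matching the centres and radii of the two endpoint differences, which is two scalar conditions that are trivially realizable. For any $\alpha\in(-1,1)$ Theorem~\ref{Thm3.2} then produces the $\alpha$-fractal function $Q^{\alpha}:=Q^{\alpha}_{\Delta,S}$ of the set-valued polynomial $Q$, which is what we mean by a set-valued fractal polynomial. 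Now apply Proposition~\ref{new79} with germ $Q$ and base $S$:
\[\lVert Q^{\alpha}-Q\rVert_{\infty}\le\frac{\lvert\alpha\rvert}{1-\lvert\alpha\rvert}\,\lVert Q-S\rVert_{\infty}+\frac{2\lvert\alpha\rvert}{1-\lvert\alpha\rvert}\,\lVert Q\rVert_{\infty}.\]
With $Q$ and $S$ now fixed, the right-hand side tends to $0$ as $\lvert\alpha\rvert\to0$, so we may choose $\alpha$ with $\lvert\alpha\rvert<1$ small enough that $\lVert Q^{\alpha}-Q\rVert_{\infty}<\tfrac{\epsilon}{2}$. Setting $P^{\alpha}:=Q^{\alpha}$, the triangle inequality gives $\lVert F-P^{\alpha}\rVert_{\infty}\le\lVert F-Q\rVert_{\infty}+\lVert Q-Q^{\alpha}\rVert_{\infty}<\epsilon$.

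The only non-routine ingredient is Step 1: having at one's disposal a Weierstrass/Bernstein approximation by set-valued polynomials. This is exactly where the convexity hypothesis ($\mathcal{K}_c(\mathbb{R})$ rather than $\mathcal{K}(\mathbb{R})$) is essential, since Minkowski-sum polynomial operators need not converge for general compact-set-valued maps; everything after that is the already-established perturbation estimate together with a triangle inequality.
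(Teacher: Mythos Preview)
Your proposal is correct and follows essentially the same route as the paper: approximate $F$ by a set-valued polynomial via Vitale's result, then invoke Proposition~\ref{new79} with $\lvert\alpha\rvert$ small enough and finish by the triangle inequality. The only cosmetic differences are your $\epsilon/2$--$\epsilon/2$ split (the paper uses $\epsilon/3$ to handle the two terms in the perturbation estimate separately) and your additional remarks on the interval-endpoint description of the Bernstein operator and on choosing $S$ to be a polynomial, neither of which changes the argument.
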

 \begin{proof}
 For $\epsilon >0$ using \cite{vitale1978approximation}, there is a set-valued polynomial function $P$ such that
 \[\lVert F-P \rVert _{\infty} <\frac{\epsilon}{3} .\]
 Choose a partition $\Delta_P=\{y_0,\ldots, y_M\}$ of $I $ and  a continuous function $S_P $ satisfying $S_P(y_0)-P(y_0)=S_P(y_M)-P(y_M)$, and $\alpha \in (-1,1)$ such that 
 \[\lvert \alpha \rvert <\min\left\{\frac{\frac{\epsilon}{3}}{\frac{\epsilon}{3}+\lVert P-S_P\rVert _{\infty}} ,\frac{\frac{\epsilon}{3}}{\frac{\epsilon}{3}+2\lVert P\rVert _{\infty}} \right\}.\]
 Then, we get
   \begin{align*}
    \lVert F-P^{\alpha} \rVert_{\infty} & \le \lVert F-P  \rVert _{\infty}+\lVert P- P^{\alpha}\rVert _{\infty}~ \text{ (using triangle inequality)}\\
    & \le \lVert F-P  \rVert _{\infty}+ \frac{\lvert \alpha \rvert}{1-\lvert \alpha \rvert}\lVert P- S_P\rVert _{\infty}+\frac{2\lvert \alpha \rvert}{1-\lvert \alpha \rvert}\lVert P\rVert _{\infty}~ \text{ (using Proposition \ref{new79})}\\
    & < \frac{\epsilon}{3} + \frac{\epsilon}{3}+ \frac{\epsilon}{3} \\
     & = \epsilon.
   \end{align*}
   
 \end{proof}
 \begin{remark}
 We took $\alpha \in \mathbb{R}$ in the above proof, such that
 \[\lvert \alpha \rvert <\min\left\{\frac{\frac{\epsilon}{3}}{\frac{\epsilon}{3}+\lVert P-S_P\rVert _{\infty}} ,\frac{\frac{\epsilon}{3}}{\frac{\epsilon}{3}+2\lVert P\rVert _{\infty}}  \right\}.\] 
 In this situation, $\alpha$ may be ``close" to $0$ and hence $P^\alpha$ may not be self-referential and it may behave as a classical  polynomial. In alter, if we fix $\alpha \in (-1,1)$ such that $\lvert \alpha \rvert < 1$, but otherwise arbitrary and choose a polynomial $P$ and 
a function $S_P \in \mathcal{C}(I, \mathcal{K}_c( \mathbb{R}))$ satisfying $S_P(u_1)-P(u_1)=S(u_N)-P(u_N)$ and
\[\lVert P-S_P\rVert   <\frac{(1-\lvert \alpha \rvert)\epsilon}{3\lvert \alpha \rvert } \text{ and }  \lVert P\rVert _{\infty}< \frac{(1-\lvert \alpha \rvert)\epsilon}{6 \lvert \alpha \rvert } .\]
This forces $F$ to be a zero set function. Hence, the analogue of \cite[Remark 5.2]{verma2020fractal} cannot be established in the setting of set-valued mappings. In particular, the recently developed notion of Bernstein fractal functions will not be useful in the approximation of set-valued functions.

 \end{remark}
With the reference to Theorem \ref{BFOTHM5}, we have
\begin{theorem}
 The set of set-valued fractal polynomials with a non-zero scale vector is dense in $\mathcal{C}(I , \mathcal{K}_c( \mathbb{R})).$
 \end{theorem}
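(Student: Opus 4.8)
The plan is to extract the statement from Theorem \ref{BFOTHM5} almost verbatim; the only point needing attention is that the approximating fractal polynomial can be arranged to carry a \emph{non-zero} scale. First I would fix an arbitrary $F \in \mathcal{C}(I, \mathcal{K}_c(\mathbb{R}))$ and $\epsilon > 0$ and reread the construction in the proof of Theorem \ref{BFOTHM5}: it produces a set-valued polynomial $P$ with $\lVert F - P \rVert_\infty < \epsilon/3$, a partition $\Delta_P = \{y_0, \ldots, y_M\}$ of $I$, a base function $S_P \in \mathcal{C}(I, \mathcal{K}_c(\mathbb{R}))$ with $S_P(y_0) - P(y_0) = S_P(y_M) - P(y_M)$, and a scaling factor $\alpha$ required only to satisfy the strict inequality
\[ |\alpha| < c := \min\left\{\frac{\epsilon/3}{\epsilon/3 + \lVert P - S_P \rVert_\infty}, \; \frac{\epsilon/3}{\epsilon/3 + 2\lVert P \rVert_\infty}\right\}. \]
Since $\lVert P - S_P \rVert_\infty$ and $\lVert P \rVert_\infty$ are finite, $c$ is a strictly positive real number, so the admissible set $\{\alpha \in (-1,1) : |\alpha| < c\}$ is a non-degenerate interval around $0$ and in particular contains non-zero points.

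Next I would simply choose any $\alpha$ with $0 < |\alpha| < \min\{c, 1\}$ and let $P^\alpha$ be the associated $\alpha$-fractal function of $P$ (with base $S_P$ and partition $\Delta_P$), which exists and is unique by Theorem \ref{Thm3.2}. The same chain of inequalities used in Theorem \ref{BFOTHM5} — namely $\lVert F - P^\alpha \rVert_\infty \le \lVert F - P \rVert_\infty + \lVert P - P^\alpha \rVert_\infty$ combined with the perturbation estimate of Proposition \ref{new79} applied to the pair $(P, P^\alpha)$ — then still yields $\lVert F - P^\alpha \rVert_\infty < \epsilon/3 + \epsilon/3 + \epsilon/3 = \epsilon$. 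Thus $P^\alpha$ is a set-valued fractal polynomial with non-zero scale lying within $\epsilon$ of $F$; since $F$ and $\epsilon$ were arbitrary, density follows.

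The argument has no real obstacle: it is a bookkeeping refinement of Theorem \ref{BFOTHM5}. The one place where a careless reading could fail is the claim that the bound $c$ on $|\alpha|$ is genuinely positive — equivalently, that the interval of admissible non-zero scales is non-empty — and this is immediate from the finiteness of the two norms above. No tools beyond Theorem \ref{BFOTHM5} and Proposition \ref{new79} (and, for existence of $P^\alpha$, Theorem \ref{Thm3.2}) are required. If one wanted to be extra careful about degeneracies, one could additionally use the freedom in selecting $S_P$ among the listed families of base functions, but this is not needed.
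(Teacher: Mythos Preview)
Your proposal is correct and matches the paper's approach: the paper simply states this theorem as an immediate consequence of Theorem \ref{BFOTHM5}, and you have spelled out the one extra observation needed (that the admissible bound on $|\alpha|$ is strictly positive, so a non-zero scale can always be selected).
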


\subsection{Constrained approximation}   
Here we target to study some constrained approximation aspects of fractal functions. Before proving the next theorem, let us recall a result and prove a lemma as a prelude.
\begin{result}\label{res 4.5}
Consider $X,~Y$ are topological spaces, $f:X\to Y$ is a continuous function and $S$ is a dense subset in $X$. If $f(u)\leq 0~ (f(u)\geq 0)$ for each $u\in S$, then $f(u)\leq 0~(f(u)\geq 0)$ for each $u\in X$.
\end{result}
\begin{lemma}\label{lem4.6}
 The set $C=\underset{n \in \mathbb{N}}{\bigcup}\left(\underset{1\leq i_1,\ldots,i_n\leq N}{\bigcup}L_{i_{1}\ldots i_{n}}\big(\left\{u_1,\ldots,u_N\right\}\big)\right)$ is dense in interval $I=[0,1]$, where $L_{i_{1}\ldots i_{n}}(u)=L_{i_{1}}(L_{i_{2}}(\ldots (L_{i_{n}}(u))))$ and $n \in \mathbb{N}.$
\end{lemma}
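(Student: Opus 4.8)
<br>

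The plan is to show that the set $C$ of all images of the partition points $\{u_1,\dots,u_N\}$ under finite compositions of the maps $L_i$ is dense in $I=[0,1]$ by exploiting the fact that the $L_i$ are contractions covering $I$ with uniformly bounded ratios. First I would record the two structural facts we have available: each $L_i=L_i^{\Delta}$ is an affine contractive homeomorphism of $I$ onto $I_i=[u_i,u_{i+1}]$ with $\bigcup_{i=1}^{N-1} I_i=I$, and its contraction ratio is $|a_i|=\tfrac{u_{i+1}-u_i}{u_N-u_1}<1$. Set $c:=\max_{1\le i\le N-1}|a_i|<1$. Then for any word $(i_1,\dots,i_n)$ the composition $L_{i_1\dots i_n}$ is again an affine contraction of $I$ into itself with ratio at most $c^n$, so its image is an interval of length at most $c^n\,|I|$.

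The key step is a nested-interval / covering argument. I claim that for every $n\in\mathbb{N}$ the family $\{L_{i_1\dots i_n}(I): 1\le i_1,\dots,i_n\le N-1\}$ covers $I$. This follows by induction: for $n=1$ it is the hypothesis $\bigcup_i I_i=I$; assuming $\bigcup_{i_1,\dots,i_n} L_{i_1\dots i_n}(I)=I$, applying any $L_{i_0}$ gives $L_{i_0}(I)=\bigcup_{i_1,\dots,i_n} L_{i_0 i_1\dots i_n}(I)$, and taking the union over $i_0$ yields $\bigcup_{i_0} L_{i_0}(I)=I$ again, which is exactly the cover at level $n+1$. Consequently, given any $x\in I$ and any $\varepsilon>0$, choose $n$ with $c^n\,|I|<\varepsilon$ and pick a word with $x\in L_{i_1\dots i_n}(I)$; since $L_{i_1\dots i_n}(I)$ is an interval of length $<\varepsilon$ whose endpoints are among $L_{i_1\dots i_n}(\{u_1,\dots,u_N\})\subseteq C$ (the endpoints of $I$ are $u_1,u_N$, and affine maps send endpoints to endpoints), there is a point of $C$ within $\varepsilon$ of $x$. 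Hence $C$ is dense in $I$.

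I would phrase the final line as: since $x\in I$ and $\varepsilon>0$ were arbitrary, $\overline{C}=I$, completing the proof. A minor bookkeeping remark worth including: in the statement the indices run $1\le i_1,\dots,i_n\le N$, but the maps $L_i$ are only defined for $i\in J=\{1,\dots,N-1\}$, so one should read the union as being over $J$; this does not affect density since the smaller family already covers $I$.

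The main obstacle — really the only subtlety — is making sure the endpoints of the image intervals genuinely lie in $C$ and that the covering property is propagated correctly under composition; both are clean once one notes that each $L_{i_1\dots i_n}$ is affine, hence monotone, hence carries $\{u_1,u_N\}$ onto the two endpoints of its image interval. Everything else is the standard observation that a system of contractions whose first-level images cover the space has the orbit of any finite nonempty seed set dense in the attractor, here the whole of $I$.
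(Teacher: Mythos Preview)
Your proof is correct and follows essentially the same approach as the paper: both use that the compositions $L_{i_1\dots i_n}$ contract $I$ by a factor at most $c^n$ with $c=\max_i a_i<1$, together with the covering $\bigcup_i L_i(I)=I$, to place a point of $C$ within $\varepsilon$ of any given $x\in I$. Your version is more explicit about the inductive covering step (the paper compresses this), and your remark that the indices should range over $J=\{1,\dots,N-1\}$ rather than up to $N$ is a valid correction to the statement.
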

\begin{proof}
Let $u\in I$ be any point. Observe that for some $w\in \left\{u_1,\ldots,u_N\right\}$, we have $\lvert u-w \rvert\leq \underset{i\in J}{\max} \left\{\frac{u_i-u_{i-1}}{2}\right\}$. Since each $L_i$, is a contraction mapping with contraction coefficient $a_i$. Choose $a=\underset{i\in J}{\max} \{a_i\}$, then for each $u\in I$ and for each $\epsilon>0$ we can choose $w\in L_{i_{1}\ldots i_{n}}\big(\left\{u_1,\ldots,u_N\right\}\big)$ for some $n\in \mathbb{N}$ such that,
\[\lvert u-w \rvert\leq a^{n}\underset{i\in J}{\max} \left\{\frac{u_i-u_{i-1}}{2}\right\}<\epsilon.\]
This completes the proof.
\end{proof}

\begin{theorem}\label{Thm4.7}
Let $F,G \in \mathcal{C}(I , \mathcal{K}( \mathbb{R}))$ and $\Delta$ as defined in Theorem \ref{Thm3.2}, and $F(u_1),F(u_N), G(u_1)$, $G(u_N)$ are single-valued. If $F \le G$, then $F^{\alpha} \le G^{\alpha}$ provided $S_F, S_G \in \mathcal{C}(I , \mathcal{K}( \mathbb{R}))$ satisfying $S_F \le S_G$ and $S_F(u_1)=F(u_1),S_F(u_N)=F(u_N),S_G(u_1)=G(u_1),S_G(u_N)=G(u_N).$
\end{theorem}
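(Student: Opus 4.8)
The plan is to verify the inclusion $F^{\alpha}(u)\subseteq G^{\alpha}(u)$ first on the dense set $C$ supplied by Lemma \ref{lem4.6}, and then to propagate it to all of $I$ by a closedness argument. On $C$ I would induct on the level $n\ge 0$ of a point $w=L_{i_1\ldots i_n}(u_j)$ (with $i_1,\ldots,i_n\in J$, $j\in\{1,\ldots,N\}$, the case $n=0$ being the knots $u_1,\ldots,u_N$). For the base case: because $F(u_1),F(u_N),G(u_1),G(u_N)$ are single-valued and $S_F,S_G$ coincide with $F,G$ at $u_1$ and $u_N$, we are in the situation of Note \ref{Nt3.4} (cf. Remark \ref{rmrk3.3}), so $F^{\alpha}(u_i)=F(u_i)$ and $G^{\alpha}(u_i)=G(u_i)$ for every $i$; combined with $F\le G$ this gives $F^{\alpha}(u_i)\subseteq G^{\alpha}(u_i)$. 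For the inductive step, a level-$(n+1)$ point is $w=L_{i_0}(v)$ with $v\in C$ of level $n$; then $w\in I_{i_0}$ and $L_{i_0}^{-1}(w)=v$, so the self-referential equation \eqref{self12} gives $F^{\alpha}(w)=F(w)+\alpha[F^{\alpha}(v)-S_F(v)]$ and $G^{\alpha}(w)=G(w)+\alpha[G^{\alpha}(v)-S_G(v)]$, and the inclusion at $w$ will drop out of $F(w)\subseteq G(w)$ (hypothesis), $S_F(v)\subseteq S_G(v)$ (hypothesis) and $F^{\alpha}(v)\subseteq G^{\alpha}(v)$ (inductive hypothesis).

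The algebraic ingredient I would isolate is the monotonicity of the Minkowski operations on $\mathcal{K}(\mathbb{R})$: if $A_i\subseteq B_i$ for $i=1,2,3$ and $\lambda\in\mathbb{R}$, then $A_1+\lambda(A_2-A_3)\subseteq B_1+\lambda(B_2-B_3)$, where $A-B:=A+(-B)$. This is immediate, since Minkowski addition is monotone in each slot, $-A_3\subseteq -B_3$, and scalar multiplication preserves inclusion; note that no Hukuhara-type cancellation appears, so the sign of $\alpha$ causes no difficulty. Applying this with $A_1=F(w)$, $A_2=F^{\alpha}(v)$, $A_3=S_F(v)$ and the corresponding $B$'s closes the induction.

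For the passage from $C$ to $I$, I would show that $E:=\{u\in I:F^{\alpha}(u)\subseteq G^{\alpha}(u)\}$ is closed. If $u_k\to u$ with $u_k\in E$, then $F^{\alpha}(u_k)\to F^{\alpha}(u)$ and $G^{\alpha}(u_k)\to G^{\alpha}(u)$ in the Hausdorff metric by continuity of $F^{\alpha},G^{\alpha}$; given $a\in F^{\alpha}(u)$ one picks $a_k\in F^{\alpha}(u_k)$ with $a_k\to a$, and then $a_k\in G^{\alpha}(u_k)$ together with compactness of $G^{\alpha}(u)$ forces $a\in G^{\alpha}(u)$, so $u\in E$. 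Since $C\subseteq E$ and $C$ is dense, $E=I$, i.e. $F^{\alpha}\le G^{\alpha}$. An alternative that avoids Lemma \ref{lem4.6} is to iterate the two RB operators from the admissible starting maps $F$ and $G$: the same Minkowski monotonicity gives $H\le K\Rightarrow\Phi_F(H)\le\Phi_G(K)$, hence $\Phi_F^{k}(F)\le\Phi_G^{k}(G)$ for all $k$, and letting $k\to\infty$ with the same compact-set limiting fact finishes.

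I do not expect a real obstacle, but two points require care. First, one must genuinely justify that $F^{\alpha}$ and $G^{\alpha}$ interpolate at the knots — this is exactly where the single-valued-endpoint hypotheses and the matching of $S_F,S_G$ with $F,G$ at $u_1,u_N$ enter, via Note \ref{Nt3.4}. Second, in the closure step Result \ref{res 4.5} cannot be used verbatim since it is stated for real-valued functions; one must argue closedness of $E$ directly from the behaviour of set inclusions under Hausdorff convergence of compact sets, as sketched above.
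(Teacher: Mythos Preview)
Your proposal is correct and follows essentially the same route as the paper: establish interpolation at the knots via Note~\ref{Nt3.4}, propagate the inclusion inductively to the dense set $C$ of Lemma~\ref{lem4.6} using the self-referential equation and Minkowski monotonicity, then extend to all of $I$ by density. Your direct closedness argument for $E=\{u:F^{\alpha}(u)\subseteq G^{\alpha}(u)\}$ is in fact more careful than the paper's invocation of Result~\ref{res 4.5} (which, as you observe, is stated only for real-valued comparisons).
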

\begin{proof}
Let $S_F, S_G \in \mathcal{C}(I , \mathcal{K}( \mathbb{R}) )$ such that $S_F \le S_G$ and $S_F(u_1)=F(u_1),~S_F(u_N)=F(u_N),~S_G(u_1)=G(u_1),~S_G(u_N)=G(u_N).$
Using Note \ref{Nt3.4}, we have 
\[F^{\alpha}(u_i)=F(u_i),~G^{\alpha}(u_i)=G(u_i) \text{ for each } i=1,\ldots,N.\]
From the self-referential equation, 
\[ F^{\alpha}\left(L_j(u)\right)= F\left(L_j(u)\right) + \alpha  \left[F^{\alpha}(u) - S_F(u)\right],\text{ and } G^{\alpha}\left(L_j(u)\right)= G\left(L_j(u)\right) + \alpha  \left[G^{\alpha}(u) - S_G(u)\right]\]
 for each $ u \in I_j, $ where $j \in J.$ For $u \in \Delta,$ we deduce \[F^{\alpha}\big(L_j(u)\big) \subset G^{\alpha}\big(L_j(u)\big) \text{ for any } j \in J.\]
 Applying the process repeatedly, we get
 \[F^{\alpha}\left(L_{i_{1}\ldots i_{n}}(u)\right)\subset G^{\alpha}\left(L_{i_{1}\ldots i_{n}}(u)\right) \text{ for any } i_1,\ldots, i_n \in J, ~ u \in \{u_1,\ldots,u_N\},\]
 where $L_{i_{1}\ldots i_{n}}(u)=L_{i_{1}}(L_{i_{2}}(\ldots (L_{i_{n}}(u))))$ and $n \in \mathbb{N}.$\\
 This implies that $F^{\alpha}(u)\subset G^{\alpha}(u)$ for each  $u\in \underset{n \in \mathbb{N}}{\cup}\left(\underset{1\leq i_1,\ldots,i_n\leq N}{\cup}L_{i_{1}\ldots i_{n}}\big(\left\{u_1,\ldots,u_N\right\}\big)\right).$ \\
Now using Lemma \ref{lem4.6} and Result \ref{res 4.5} we are done.
\end{proof}

\section{Dimensional Results}\label{sec5}
To move further in this section, we shall first observe some examples, then only one can understand the motivation behind this section.
\begin{example}
   Let $F_1:[0,1]\rightrightarrows \mathbb{R}$ is a set-valued map defined as $F_1(u)=\{0\}$, then according to \eqref{Gf1} graph of this function will be a line segment in $\mathbb{R}^2,$ and hence $\dim_H(G_{F_{1}})=1$.
\end{example}
\begin{example}\label{Ex5.2}
 Let $F_2:[0,1]\rightrightarrows \mathbb{R}$ is a set-valued map defined as $F_2(u)=[-1,1]$, then by \eqref{Gf1}, we have $G_{F_{2}}=[0,1]\times [-1,1],$ and hence $\dim_H(G_{F_{2}})=2$.
\end{example} 

\begin{example}
 Let $F_3:[0,1]\rightrightarrows \mathbb{R}$ be a set-valued map defined as $F_3(u)=C$, where $C$ is Cantor set. Then, by \eqref{Gf1} we have $G_{F_{3}}=[0,1]\times C,$ and hence $\dim_H(G_{F_{2}})=1+\frac{\log 2}{\log 3}$.
\end{example}
 Notice that $F_1, F_2,$ and $F_3$ are constant maps. Therefore, these are  Lipschitz and bounded variation maps as well. Unlike the case of a single-valued map, here we witness that the Hausdorff dimension of the graph of a set-valued Lipschitz map is other than 1, and the same observation holds for the graph of a set-valued bounded variation map also. In fact, one can always find a set-valued Lipschitz map or set-valued bounded variation map whose graph has dimension $\beta$ for any $1 \leq \beta \leq 2$. We observe that with the definition of the graph as in \eqref{Gf1}, we could not find any fascinating dimensional result, therefore we give a new definition of the graph of a set-valued map and study some dimensional results for this new definition of the graph.

 \begin{definition} \label{newg}
   Let $F:[0,1]\rightarrow \mathcal{K}(\mathbb{R})$ be a set-valued map, then a graph of $F$ is defined as;
   \begin{equation}\label{grph}
    \mathcal{G}(F)=\left\{(u,F(u)): F(u)\in \mathcal{K}(\mathbb{R}) \right\}\subset [0,1]\times \mathcal{K}(\mathbb{R}).  
   \end{equation} 
   Defined a metric on this graph, \[D_{\mathcal{G}}((u,F(u)),(w,F(w)))=\lvert u-w\rvert+H_d(F(u),F(w)).\]
   \end{definition}
Next, we prove the graph of $F^{\alpha}$ defined in \eqref{grph} is an attractor of an IFS defined on $I\times \mathcal{K}_c(\mathbb{R})$.

 Let us note the following lemma as a prelude. The motivation of this following lemma is coming from \cite[Proposition $1$]{barnsley2015bilinear}.
 \begin{lemma}\label{lemma5.1}
  Define a function $\mathfrak{d}:I\times \mathcal{K}_c(\mathbb{R}) \rightarrow [0, \infty)$ as
 \[\mathfrak{d}\big((u,A),(w,B)\big)=\lvert u-w \rvert+H_d\big(A+F^{\alpha}(w),B+F^{\alpha}(u)\big).\]
  Then, $I\times \mathcal{K}_c(\mathbb{R})$ with respect to $\mathfrak{d}$ is a complete metric space.
 \end{lemma}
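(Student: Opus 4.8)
The plan is to verify the three metric-space axioms for $\mathfrak{d}$ and then prove completeness, exploiting the cancellation property $H_d(A+D,B+D)=H_d(A,B)$ for convex compact sets together with the subadditivity and homogeneity of $H_d$ recorded in Note~\ref{new71}. For non-negativity and symmetry there is nothing to do: symmetry is immediate from the symmetry of $H_d$ and of $\lvert\cdot\rvert$, and non-negativity is clear. For the identity of indiscernibles, note $\mathfrak{d}((u,A),(w,B))=0$ forces $u=w$, whence the Hausdorff term becomes $H_d(A+F^{\alpha}(u),B+F^{\alpha}(u))=H_d(A,B)=0$, so $A=B$. For the triangle inequality, given a third point $(v,C)$, I would write
\begin{align*}
H_d\big(A+F^{\alpha}(w),B+F^{\alpha}(u)\big)
&\le H_d\big(A+F^{\alpha}(w)+F^{\alpha}(v),B+F^{\alpha}(u)+F^{\alpha}(v)\big)\\
&\le H_d\big(A+F^{\alpha}(v),C+F^{\alpha}(u)\big)+H_d\big(C+F^{\alpha}(w),B+F^{\alpha}(v)\big),
\end{align*}
where the first line uses the cancellation property with $D=F^{\alpha}(v)$ and the second uses subadditivity of $H_d$ after suitably regrouping the Minkowski sums; adding $\lvert u-w\rvert\le\lvert u-v\rvert+\lvert v-w\rvert$ then gives $\mathfrak{d}((u,A),(w,B))\le\mathfrak{d}((u,A),(v,C))+\mathfrak{d}((v,C),(w,B))$.

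For completeness, let $\{(u_n,A_n)\}$ be $\mathfrak{d}$-Cauchy. The first coordinate $\{u_n\}$ is Cauchy in $I$ hence converges to some $u\in I$. For the second coordinate, the key observation is that, by the cancellation property and the continuity (hence uniform continuity) of $F^{\alpha}$ on the compact interval $I$,
\[
H_d(A_n,A_m)\le H_d\big(A_n+F^{\alpha}(u_m),A_m+F^{\alpha}(u_n)\big)+H_d\big(F^{\alpha}(u_n),F^{\alpha}(u_m)\big),
\]
and both terms on the right tend to $0$ as $n,m\to\infty$ (the first because $\{(u_n,A_n)\}$ is $\mathfrak{d}$-Cauchy, the second because $\{u_n\}$ is convergent and $F^{\alpha}$ is uniformly continuous). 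Hence $\{A_n\}$ is Cauchy in $(\mathcal{K}_c(\mathbb{R}),H_d)$, which is complete, so $A_n\to A$ for some $A\in\mathcal{K}_c(\mathbb{R})$. Finally I would check $\mathfrak{d}((u_n,A_n),(u,A))\to 0$: the first coordinate contributes $\lvert u_n-u\rvert\to 0$, and for the Hausdorff term $H_d(A_n+F^{\alpha}(u),A+F^{\alpha}(u_n))\le H_d(A_n+F^{\alpha}(u),A+F^{\alpha}(u))+H_d(F^{\alpha}(u),F^{\alpha}(u_n))=H_d(A_n,A)+H_d(F^{\alpha}(u),F^{\alpha}(u_n))\to 0$ by convergence of $A_n$ and continuity of $F^{\alpha}$.

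The only genuinely delicate point is making sure every manipulation of the Minkowski-sum expressions is legitimate, i.e.\ that all the regroupings in the triangle-inequality step are valid applications of Note~\ref{new71}(1) and of the cancellation identity $H_d(A+D,B+D)=H_d(A,B)$, which requires the convexity of the sets — this is exactly why the domain is $\mathcal{K}_c(\mathbb{R})$ rather than $\mathcal{K}(\mathbb{R})$. I expect this bookkeeping to be the main (though routine) obstacle; the completeness argument is then a straightforward consequence of the completeness of $I$ and of $(\mathcal{K}_c(\mathbb{R}),H_d)$ once the uniform continuity of $F^{\alpha}$ is invoked.
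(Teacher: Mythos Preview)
Your proposal is correct and follows essentially the same route as the paper: the identity of indiscernibles via cancellation at $u=w$, the triangle inequality by inserting an auxiliary $F^{\alpha}$-translate and then splitting via the subadditivity/cancellation identities of Note~\ref{new71}, and completeness by first extracting the limit of $\{u_n\}$, then using uniform continuity of $F^{\alpha}$ together with cancellation to show $\{A_n\}$ is $H_d$-Cauchy. Your write-up is in fact slightly more thorough than the paper's, since you explicitly verify $\mathfrak{d}((u_n,A_n),(u,A))\to 0$ at the end and you correctly flag that the cancellation identity $H_d(A+D,B+D)=H_d(A,B)$ is the place where convexity (i.e.\ working in $\mathcal{K}_c(\mathbb{R})$) is essential.
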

 
 \begin{proof}
 Clearly, $\mathfrak{d}\big((u,A),(w,B)\big)=\mathfrak{d}\big((w,B),(u,A)\big)\geq 0$. Suppose that $\mathfrak{d}\big((u,A),(w,B)\big)=0$, then
 \begin{align*}
  &\lvert u-w \rvert+H_d\big(A+F^{\alpha}(w),B+F^{\alpha}(u)\big)=0\\
  \text{i.e.,}~ &\lvert u-w \rvert=0 \text{ and } H_d\big(A+F^{\alpha}(w),B+F^{\alpha}(u)\big)=0\\
  \text{i.e.,}~ & u=w ~H_d\big(A+F^{\alpha}(w),B+F^{\alpha}(u)\big)=H_d(A,B)=0\\
  \text{i.e.,}~& u=w \text{ and } A=B\\
  \text{i.e.,}~& (u,A)=(w,B).
 \end{align*}
 Now to prove that $\mathfrak{d}$ satisfies the triangle inequality. Take $(u_i, A_i)\in I \times \mathcal{K}_c(\mathbb{R})$ for $i=1,2,3.$ Then, we have
 \begin{align*}
     &\mathfrak{d}\big((u_1,A_1),(u_2,A_2)\big)\\
     =&\lvert u_1-u_2\rvert +H_d\big(A_1+F^{\alpha}(u_2),A_2+F^{\alpha}(u_1)\big)\\
     =& \lvert u_1-u_2\rvert +H_d\big(A_1+F^{\alpha}(u_2)+A_3+F^{\alpha}(u_3),A_2+F^{\alpha}(u_1)+A_3+F^{\alpha}(u_3)\big)\\
     \le& \big\{\lvert u_1-u_3\rvert +\lvert u_3-u_2\rvert \big\}+\big\{H_d\big(A_1+F^{\alpha}(u_3), A_3+F^{\alpha}(u_1)\big)+H_d\big(A_3+F^{\alpha}(u_2), A_2+F^{\alpha}(u_3)\big) \big\}.
     \end{align*}
     Hence,
     \[\mathfrak{d}\bigg((u_1,A_1),(u_2,A_2)\bigg)\leq \mathfrak{d}\bigg((u_1,A_1),(u_3,A_3)\bigg)+\mathfrak{d}\bigg((u_3,A_3),(u_2,A_2)\bigg).\]
     To prove completeness, let $\{(u_n,A_n)\}$ is a Cauchy sequence in $I\times \mathcal{K}_c(\mathbb{R})$. For $\epsilon>0$ there is an integer $N(\epsilon)$ such that
     \[\lvert u_n-u_m\rvert +H_d\big(A_n+F^{\alpha}(u_n), A_m+F^{\alpha}(u_m)\big)< \epsilon,~ \text{whenever}~ m,n\geq N(\epsilon).\]
    This shows $\{u_n\}$ is a Cauchy sequence of $I$, hence it converges to, say $u^*\in I.$ Since $F^{\alpha}$ is a uniformly continuous map, consequently  $\{F^{\alpha}(u_n)\}$ will also be a Cauchy sequence with respect to Hausdorff metric, and hence converges to $F^{\alpha}(u^*)\in \mathcal{K}_c(\mathbb{R})$. Then,
     \begin{align*}
         H_d(A_n,A_m)=& H_d\big(A_n+F^{\alpha}(u_n),A_m+F^{\alpha}(u_n)\big)\\
         \le & H_d\big(A_n+F^{\alpha}(u_n),A_n+F^{\alpha}(u_m)\big)+H_d\big(A_n+F^{\alpha}(u_m)+A_m+F^{\alpha}(u_n)\big)\\
         = & H_d\big(F^{\alpha}(u_n),F^{\alpha}(u_m)\big)+H_d\big(A_n+F^{\alpha}(u_m)+A_m+F^{\alpha}(u_n)\big)\\
        <& \frac{\epsilon}{2}+\frac{\epsilon}{2}=\epsilon.
     \end{align*}
     This implies, $\{A_n\}$ is a Cauchy sequence of $\mathcal{K}_c(\mathbb{R})$ and so it converges to, say $A^{*}\in \mathcal{K}_c(\mathbb{R}).$ Hence, $\{(u_n, A_n)\}$ converges to $(u^*, A^*) \in I\times \mathcal{K}_c(\mathbb{R}).$ This completes the proof.
     
 \end{proof}

 \begin{proposition}\label{prop5.6}
Define $W_{j}: I \times \mathcal{K}_c(\mathbb{R})\rightarrow I \times \mathcal{K}_c(\mathbb{R})$ for each $j\in J$ such that
 \[W_{j}(u,A)=\left(L_{j}(u), \alpha A+F(L_{j}(u))-\alpha S(u)\right).\] Then, each $W_j$ is a contraction map with respect to the metric defined in Lemma \ref{lemma5.1} , provided $\max\{\lvert \alpha \rvert,a_j\}<1$ for each $j\in J$.
 \end{proposition}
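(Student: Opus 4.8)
The plan is to estimate $\mathfrak{d}\big(W_j(u,A),W_j(w,B)\big)$ directly and show it is at most $\max\{|\alpha|,a_j\}\,\mathfrak{d}\big((u,A),(w,B)\big)$, so that the contraction constant is $\max\{|\alpha|,a_j\}<1$. First I would dispose of well-definedness: since $L_j$ maps $I$ into $I_j\subseteq I$ and $\mathcal{K}_c(\mathbb{R})$ is stable under Minkowski sums and scalar multiples, the set $\alpha A+F(L_j(u))-\alpha S(u)$ again belongs to $\mathcal{K}_c(\mathbb{R})$; hence $W_j$ maps $I\times\mathcal{K}_c(\mathbb{R})$ into itself and the metric $\mathfrak{d}$ of Lemma~\ref{lemma5.1} is available on the target.

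For the contraction bound, the first coordinate contributes $|L_j(u)-L_j(w)|=a_j|u-w|$ because $L_j(u)=a_ju+b_j$ is affine. For the second coordinate, the key move is to substitute the self-referential equation \eqref{self12}, namely $F^{\alpha}(L_j(v))=F(L_j(v))+\alpha\big[F^{\alpha}(v)-S(v)\big]$ for $v\in\{u,w\}$, into the two arguments of the Hausdorff distance occurring in $\mathfrak{d}\big(W_j(u,A),W_j(w,B)\big)$. After this substitution both arguments acquire the common summand $Z:=F(L_j(u))+F(L_j(w))-\alpha S(u)-\alpha S(w)\in\mathcal{K}_c(\mathbb{R})$, which is cancelled by the translation invariance $H_d(X+Z,Y+Z)=H_d(X,Y)$ for convex compact sets (the same identity from \cite{hu1997handbook} used in the completeness proof above). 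What remains is $H_d\big(\alpha A+\alpha F^{\alpha}(w),\alpha B+\alpha F^{\alpha}(u)\big)=|\alpha|\,H_d\big(A+F^{\alpha}(w),B+F^{\alpha}(u)\big)$ by Note~\ref{new71}(2). Collecting the two coordinates,
\[\mathfrak{d}\big(W_j(u,A),W_j(w,B)\big)=a_j|u-w|+|\alpha|\,H_d\big(A+F^{\alpha}(w),B+F^{\alpha}(u)\big)\le\max\{|\alpha|,a_j\}\,\mathfrak{d}\big((u,A),(w,B)\big),\]
which is the claimed estimate, and the hypothesis $\max\{|\alpha|,a_j\}<1$ makes $W_j$ a contraction.

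The only genuinely delicate point is the cancellation step: it is exactly where convexity of the images is indispensable, since $H_d(X+Z,Y+Z)=H_d(X,Y)$ can fail for non-convex $Z$. This is the structural reason the metric $\mathfrak{d}$ is built from the symmetrized pair $A+F^{\alpha}(w)$ versus $B+F^{\alpha}(u)$ rather than from $H_d(A,B)$ alone — only the symmetrized form is compatible with substituting \eqref{self12} — and the reason the ambient space of the IFS must be $I\times\mathcal{K}_c(\mathbb{R})$. Apart from this, the computation is routine affine bookkeeping together with the elementary properties of $H_d$ recorded in Note~\ref{new71}.
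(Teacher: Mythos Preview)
Your argument is correct and follows essentially the same route as the paper: expand $\mathfrak{d}\big(W_j(u,A),W_j(w,B)\big)$, substitute the self-referential equation \eqref{self12} for $F^{\alpha}(L_j(u))$ and $F^{\alpha}(L_j(w))$, cancel the common convex summand via $H_d(X+Z,Y+Z)=H_d(X,Y)$, and bound by $\max\{|\alpha|,a_j\}\,\mathfrak{d}\big((u,A),(w,B)\big)$. Your added remarks on well-definedness and on why convexity is essential for the cancellation step are welcome clarifications that the paper leaves implicit.
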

 
 \begin{proof}
 Let $(u,A),(w,B)\in I \times \mathcal{K}(\mathbb{R})$, then
\begin{align*}
      &\mathfrak{d}\big(W_j(u,A),W_j(w,B)\big)\\
     =&\mathfrak{d}\Big(\big(L_j(u),\alpha A+F(L_j(u))-\alpha S(u)\big),\big(L_j(w),\alpha B+F(L_j(w))+\alpha S(u)\big)\Big)\\
     =&\big\lvert L_j(u)-L_j(w)\big\rvert + H_d\Big(\alpha A+F(L_j(u))-\alpha S(u)+F^{\alpha}(L_j(w)),\\
     &\hspace{8cm}\alpha B+F(L_j(w))-\alpha S(w)+F^{\alpha}(L_j(u))\Big)\\
     =&\big\lvert L_j(u)-L_j(w)\big\rvert+ H_d\Big(\alpha A+F(L_j(u))-\alpha S(u)+F(L_j(w))+\alpha F^{\alpha}(w)-\alpha S(w),\\
     &\hspace{5.5cm}\alpha B+F(L_j(w))-\alpha S(w)+F(L_j(u))+\alpha F^{\alpha}(u)-\alpha S(u) \Big)\\
     =&a_j \big\lvert u-w \big\rvert+H_d\Big(\alpha A+\alpha F^{\alpha}(w),\alpha B+\alpha F^{\alpha}(u) \Big)\\
     =&a_j \big\lvert u-w \big\rvert+\lvert \alpha \rvert H_d\Big( A+ F^{\alpha}(w), B+ F^{\alpha}(u) \Big)\\
     \le& \max\{\lvert \alpha \rvert,a_j\} \Big(\big\lvert u-w \big\rvert+H_d\Big( A+ F^{\alpha}(w), B+ F^{\alpha}(u) \Big) \Big)\\
     =&\max\{\lvert \alpha \rvert,a_j\} \mathfrak{d}\big((u,A),(w,B)\big).
 \end{align*}
 Since $\max\{\lvert \alpha \rvert,a_j\}< 1$, each $W_j$ is a contraction mapping. 
 \end{proof}

 \begin{theorem}\label{Thm5.7}
 For each $j\in J$, let $W_{j}: I \times \mathcal{K}_c(\mathbb{R})\rightarrow I \times \mathcal{K}_c(\mathbb{R})$ be the map defined in Proposition \ref{prop5.6}.
 Then, by Definition \ref{newg}, the graph of $F^{\alpha}$ will be an attractor of the \setword{IFS}{Word:IFS}, $\left\{\left(I\times \mathcal{K}_c(\mathbb{R}),\mathfrak{d}\right);W_1,\ldots, W_{N-1}\right\}.$
 \end{theorem}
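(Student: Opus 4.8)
The plan is to invoke the standard theory of iterated function systems: by Definition \ref{Iterated} and the Hutchinson--Barnsley theorem, an IFS of contractions on a complete metric space has a unique attractor, i.e. a unique nonempty compact set $G$ satisfying $G=\bigcup_{j\in J}W_j(G)$. Proposition \ref{prop5.6} already gives that each $W_j$ is a contraction on $\left(I\times\mathcal{K}_c(\mathbb{R}),\mathfrak{d}\right)$, and Lemma \ref{lemma5.1} gives that this space is complete; so the IFS does have a unique attractor. It therefore suffices to verify that $\mathcal{G}(F^{\alpha})$ is compact and that it is invariant under the collective map, that is $\mathcal{G}(F^{\alpha})=\bigcup_{j\in J}W_j\big(\mathcal{G}(F^{\alpha})\big)$; uniqueness of the attractor then forces the two to coincide.

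First I would record compactness of $\mathcal{G}(F^{\alpha})$. Since $F^{\alpha}\in\mathcal{C}(I,\mathcal{K}_c(\mathbb{R}))$, the map $u\mapsto(u,F^{\alpha}(u))$ is continuous from the compact interval $I$ into $I\times\mathcal{K}_c(\mathbb{R})$ (continuity of $F^{\alpha}$ in the Hausdorff metric is exactly what is needed), so its image $\mathcal{G}(F^{\alpha})$ is compact; it is clearly nonempty.

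Next comes the invariance identity, which is the computational heart of the argument. Fix $j\in J$ and let $u\in I$; applying $W_j$ to the point $(u,F^{\alpha}(u))\in\mathcal{G}(F^{\alpha})$ gives
\[
W_j\big(u,F^{\alpha}(u)\big)=\Big(L_j(u),\ \alpha F^{\alpha}(u)+F(L_j(u))-\alpha S(u)\Big).
\]
By the self-referential equation \eqref{self12} applied at the point $L_j(u)\in I_j$ (so that $L_j^{-1}(L_j(u))=u$), the second coordinate is exactly $F^{\alpha}(L_j(u))$, hence $W_j\big(u,F^{\alpha}(u)\big)=\big(L_j(u),F^{\alpha}(L_j(u))\big)\in\mathcal{G}(F^{\alpha})$. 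This shows $W_j\big(\mathcal{G}(F^{\alpha})\big)\subseteq\mathcal{G}(F^{\alpha})$ for every $j$, and since $L_j$ maps $I$ onto $I_j$ as $u$ ranges over $I$, we get $W_j\big(\mathcal{G}(F^{\alpha})\big)=\{(v,F^{\alpha}(v)):v\in I_j\}$. Taking the union over $j\in J$ and using that $\{I_j\}_{j\in J}$ covers $I$ (since $\Delta$ is a partition), we obtain $\bigcup_{j\in J}W_j\big(\mathcal{G}(F^{\alpha})\big)=\{(v,F^{\alpha}(v)):v\in I\}=\mathcal{G}(F^{\alpha})$.

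Finally, I would close the argument: $\mathcal{G}(F^{\alpha})$ is a nonempty compact subset of the complete metric space $\left(I\times\mathcal{K}_c(\mathbb{R}),\mathfrak{d}\right)$ that is invariant under the IFS of contractions $\{W_1,\dots,W_{N-1}\}$, so by the uniqueness part of the Hutchinson--Barnsley theorem it is the attractor of that IFS. The step I expect to need the most care is the invariance computation: one must be attentive that the metric $\mathfrak{d}$ is the non-standard one from Lemma \ref{lemma5.1}, so the set-theoretic identity $W_j(\mathcal{G}(F^{\alpha}))=\{(v,F^{\alpha}(v)):v\in I_j\}$ (which is what actually matters, since invariance is a statement about sets, not about the metric) must be checked purely through \eqref{self12}, exactly as above; the compatibility condition $S(u_1)-F(u_1)=S(u_N)-F(u_N)$ on the endpoints guarantees that $F^{\alpha}$ is well defined and continuous, so no inconsistency arises at the shared partition nodes $u_n$ where two pieces $I_{n-1}$ and $I_n$ meet.
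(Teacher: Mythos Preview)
Your proposal is correct and follows essentially the same route as the paper: the paper's proof simply verifies the invariance identity $\bigcup_{j\in J}W_j(\mathcal{G}(F^{\alpha}))=\mathcal{G}(F^{\alpha})$ via the self-referential equation \eqref{self12} and the covering $I=\bigcup_{j\in J}L_j(I)$, exactly as you do. Your write-up is in fact more complete, since you explicitly address compactness of $\mathcal{G}(F^{\alpha})$ and invoke uniqueness from the Hutchinson--Barnsley theorem, steps the paper leaves implicit; one minor refinement is that for compactness in the metric $\mathfrak{d}$ you can note that $\mathfrak{d}\big((u,F^{\alpha}(u)),(w,F^{\alpha}(w))\big)=|u-w|$, so $u\mapsto(u,F^{\alpha}(u))$ is an isometry from $I$ onto the graph.
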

 
 \begin{proof}
 Since $I=\underset{j\in J}{\bigcup}L_{j}(I)$. Then, from \eqref{self12}, we have
 \begin{align*}
   \underset{j\in J}{\bigcup}W_{j}(\mathcal{G}(F^{\alpha})) &= \underset{j\in J}{\bigcup}\big\{W_{j}(u,F^{\alpha}(u)):u\in I\big\}\\
   &=\underset{j\in J}{\bigcup}\big\{\big(L_{j}(u), \alpha F^{\alpha}(u)+F(L_{j}(u))-\alpha S(u)\big): u\in I\big\}\\
   &= \underset{j\in J}{\bigcup}\big\{\left(L_{j}(u),F^{\alpha}(L_{j}(u))\right): u\in I\big\}\\
   &= \underset{j\in J}{\bigcup}\big\{(u, F^{\alpha}(u)): u\in L_{j}(I)\big\}\\
   &= \mathcal{G}(F^{\alpha}).
 \end{align*}
 This completes the proof.
 \end{proof}

 Schief \cite{schief1996self} noted that the dimensional results for Euclidean spaces do not have simple generalizations to complete metric spaces. Following his work, Nussbaum et al. \cite{nussbaum2012positive} proved a more general result in the setting of a complete metric space.
Answering a question raised in \cite{nussbaum2012positive}, Verma \cite{verma2021hausdorff} has shown the Hausdorff dimension of the invariant set under the SOSC. He explores several dimensional aspects of sets in complete metric space. In his book \cite{falconer2004fractal}, Falconer studied the dimensional results of sets in Euclidean spaces. Given \cite{verma2021hausdorff}, we may assure the reader that some results, which we will use, also hold in a general complete metric space.

 \begin{theorem} \label{Thm5.8}
 Let $\mathcal{I}=\{I\times \mathcal{K}_c(\mathbb{R}); W_1,\ldots W_{N-1}\}$ be the \ref{Word:IFS} defined in Theorem \ref{Thm5.7} such that
 \[r_iD_{\mathcal{G}}\big((u,A),(w,B)\big) \leq D_{\mathcal{G}}\big(W_i(u,A),W_i(w,B)\big) \leq R_i D_{\mathcal{G}}\big((u,A),(w,B)\big)\]
 for every $(u,A),(w,B)\in I \times \mathcal{K}_c(\mathbb{R}),$ where $0<r_i \leq R_i <1$ for all $i\in J.$ Then, $t_*\leq \dim_H(\mathcal{G}(F^{\alpha}))\leq t^*,$ where $t_*$ and $t^*$ are characterized by $\sum\limits_{i=1}^{N}r_{i}^{t^*}=1$ and $\sum\limits_{i=1}^{N}R_{i}^{t^*}=1,$ respectively.
 \end{theorem}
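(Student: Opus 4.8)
The plan is to invoke the general dimension-theoretic machinery for IFSs on complete metric spaces referenced in the paragraph just before the statement (namely \cite{verma2021hausdorff}, which extends \cite{nussbaum2012positive, schief1996self} and the Euclidean theory in \cite{falconer2004fractal}), applied to the IFS $\mathcal{I}$ whose attractor is $\mathcal{G}(F^{\alpha})$ by Theorem \ref{Thm5.7}. The upper bound $\dim_H(\mathcal{G}(F^{\alpha}))\le t^*$ requires no separation hypothesis: from the right-hand inequality $D_{\mathcal{G}}(W_i(u,A),W_i(w,B))\le R_i D_{\mathcal{G}}((u,A),(w,B))$ we get that each $W_i$ is a similarity-like contraction with ratio bounded by $R_i$, so a standard covering argument — cover $\mathcal{G}(F^{\alpha})=\bigcup W_{i_1\cdots i_n}(\mathcal{G}(F^{\alpha}))$ by iterated images and estimate diameters by products $R_{i_1}\cdots R_{i_n}$ — shows $H^{t^*}(\mathcal{G}(F^{\alpha}))<\infty$, whence $\dim_H(\mathcal{G}(F^{\alpha}))\le t^*$, where $t^*$ solves $\sum_{i=1}^{N}R_i^{t^*}=1$. (I note in passing that the statement as written has a typo: both $t_*$ and $t^*$ are said to be ``characterized by'' equations that both use $t^*$; the intended reading is $\sum r_i^{t_*}=1$ for the lower and $\sum R_i^{t^*}=1$ for the upper, and one should flag this.)

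For the lower bound $t_*\le \dim_H(\mathcal{G}(F^{\alpha}))$ I would first establish an open set condition (or its strong form) for $\mathcal{I}$ on $I\times\mathcal{K}_c(\mathbb{R})$. The natural candidate open set is $V=(u_1,u_N)\times\mathcal{K}_c(\mathbb{R})$ or, better, a ``tube'' neighborhood of $\mathcal{G}(F^{\alpha})$; because $W_j$ acts on the first coordinate exactly as the affine contraction $L_j$, and the $L_j$ tile $I$ with disjoint interiors, the images $W_j(V)$ are separated in the first coordinate, giving the OSC, and since $\mathcal{G}(F^{\alpha})$ visibly meets $V$, the SOSC holds. With the SOSC in hand, the result of \cite{verma2021hausdorff} (the complete-metric-space analogue of the classical mass distribution / Frostman argument) yields $\dim_H(\mathcal{G}(F^{\alpha}))\ge t_*$, where $t_*$ solves $\sum_{i=1}^{N}r_i^{t_*}=1$, using the left-hand bi-Lipschitz-type lower bound $r_i D_{\mathcal{G}}(\cdot,\cdot)\le D_{\mathcal{G}}(W_i(\cdot),W_i(\cdot))$.

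The key auxiliary fact that makes both bounds work is that the metric $D_{\mathcal{G}}$ of Definition \ref{newg} and the metric $\mathfrak{d}$ of Lemma \ref{lemma5.1} are Lipschitz equivalent on $I\times\mathcal{K}_c(\mathbb{R})$: indeed, by the Hausdorff-metric cancellation property $H_d(A+D,B+D)=H_d(A,B)$ for convex compacts and the triangle inequality, $H_d(A,B)\le H_d(A+F^{\alpha}(w),B+F^{\alpha}(u))+H_d(F^{\alpha}(u),F^{\alpha}(w))$ and symmetrically, so $\mathfrak{d}$ and $D_{\mathcal{G}}$ differ by at most a bounded multiple of $|u-w|$ plus the modulus of continuity of $F^{\alpha}$; one should be slightly careful here since $F^{\alpha}$ need only be uniformly continuous, not Lipschitz, but for the dimension comparison it suffices that Proposition \ref{prop5.6} already gives contractivity in $\mathfrak{d}$, and the hypotheses of Theorem \ref{Thm5.8} are phrased directly in terms of $D_{\mathcal{G}}$, so one simply feeds those hypotheses into the abstract IFS dimension theorem. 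The main obstacle I anticipate is verifying the open set condition rigorously in the non-Euclidean factor $\mathcal{K}_c(\mathbb{R})$ — one must choose the open set so that the vertical ``fibers'' of $W_j(V)$ stay inside $V$ while the horizontal separation from the disjoint $L_j$-images does the real work; once that is pinned down, the rest is an invocation of \cite{verma2021hausdorff} together with the routine covering estimate.
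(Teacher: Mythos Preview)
Your proposal is correct and follows essentially the same route as the paper: the upper bound is by citation to \cite{falconer2004fractal}/\cite{verma2021hausdorff}, and for the lower bound the paper verifies the SOSC using exactly the open set $V=(u_1,u_N)\times\mathcal{K}_c(\mathbb{R})$ you suggest (the separation coming entirely from the first coordinate via the $L_j$, so your worry about the $\mathcal{K}_c(\mathbb{R})$ factor is unfounded) and then appeals to the metric-space IFS dimension theory. The only minor difference is that the paper expands the lower-bound citation into an explicit sub-IFS-with-disjoint-images plus contradiction argument rather than invoking a black-box SOSC result, and your $D_{\mathcal{G}}$--$\mathfrak{d}$ comparison discussion, while careful, is not needed since the hypotheses are already stated in $D_{\mathcal{G}}$; your observation about the $t_*/t^*$ typo is correct.
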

 
 \begin{proof}
 For purposed upper bound one can refer \cite[Proposition 9.6]{falconer2004fractal}( see also, \cite[Theorem $2.12$]{verma2021hausdorff}). For lower bound of Hausdorff dimension of $\mathcal{G}(F^{\alpha})$ we progress as follows.\\
Set $V=(u_1,u_N)\times \mathcal{K}_c(\mathbb{R})$, an open set in $I\times \mathcal{K}_c(\mathbb{R})$. Since for each $i,j \in J ~\text{with}~ i\neq j,$ we have
 \[L_j\big((u_1,u_N)\big)=(u_j, u_{j+1}) \text{ and } L_i\big((u_1,u_N)\big)\cap L_j\big((u_1,u_N)\big)=\emptyset,\]
 hence for each $i,j\in J \text{ and } i\neq j$, we have
 \[W_j(V)=(u_j, u_{j+1})\times \mathcal{K}_c(\mathbb{R}) \text{ and } W_i(V)\cap W_j(V)=\emptyset.\]
 Therefore,
\[\bigcup_{i=1}^{N-1}W_i(V)=\bigcup_{i=1}^{N-1}\big\{(u_i,u_{i+1})\times \mathcal{K}_c(\mathbb{R})\big\}\subseteq V~\text{and}~W_i(V)\cap W_j(V)=\emptyset.\]
Then, by using Definition \ref{OSC} IFS satisfies OSC. We have $V\cap \mathcal{G}(F^{\alpha})\neq \emptyset$ this implies that IFS is satisfying SOSC. Since $V\cap \mathcal{G}(F^{\alpha})\neq \emptyset$, we have an $i\in J^*$ such that $\mathcal{G}(F^{\alpha})_i\subset V,$ where $J^{*}=\underset{m\in \mathbb{N}}{\cup}\{1,\ldots N-1\}^{m},$ collection of all finite sequences whose terms are in $J$ and 
\[\mathcal{G}(F^{\alpha})_i=W_i(\mathcal{G}(F^{\alpha})):= W_{i_{1}}\circ W_{i_{2}} \circ \cdots \circ W_{i_{m}}(\mathcal{G}(F^{\alpha}))\]
for $i\in J^m=J\times \cdots \times J~(\text{m-times})$ and $m\in \mathbb{N}$. Observe that for any $j\in J^m$ and $k\in \mathbb{N}$, the sets, $\mathcal{G}(F^{\alpha})_{j_i}$, are disjoint. Further, the IFS $\{W_{j_{i}}:j\in J^k\}$ satisfies the hypothesis of \cite[ Proposition 9.7]{falconer2004fractal} ( see also, \cite[Theorem $2.35$]{verma2021hausdorff}). Therefore, with the notation $r_j=r_{j_{1}}r_{j_{2}}\cdots r_{j_{k}}$ we have $t_k\leq \dim_H(G^*)$, where $G^*$ is an attractor of the \ref{Word:IFS} and $\sum\limits_{j\in J^k}r^{t_k}_{j_{i}}=1.$ Since $G^*\subset \mathcal{G}(F^{\alpha})$, $t_k\leq \dim_H(G^*) \leq \dim_H(\mathcal{G}(F^{\alpha})).$ Let if possible $ \dim_H(\mathcal{G}(F^{\alpha}))< t_*$, where $\sum\limits_{i=1}^{N}r_{i}^{t_*}=1.$ Then, $t_k < t_*.$ Now, we have
\begin{align*}
    r_{i}^{-t_k}=&\sum\limits_{j\in J^k}r_{j}^{t_k}
    \geq  \sum\limits_{j\in J^k}r_{j}^{\dim_H(\mathcal{G}(F^{\alpha}))}
    =  \sum\limits_{j\in J^k}r_{j}^{t_*}r_{j}^{\dim_H(\mathcal{G}(F^{\alpha}))-t_*}\\
    \geq & \sum\limits_{j\in J^k}r_{j}^{t_*}r_{max}^{k(\dim_H(\mathcal{G}(F^{\alpha}))-t_*)}\\
    =& r_{max}^{k(\dim_H(\mathcal{G}(F^{\alpha}))-t_*)},
\end{align*}
where $r_{\max}=\max\{r_1, r_2,\ldots , r_N\}.$ Since $r_{\max}<1$, $r_{max}^{k(\dim_H(\mathcal{G}(F^{\alpha}))-t_*)}$ tends to infinity as $k$ tends to infinity, and therefore $r_{i}^{-t_k}$ is unbounded, which is a contradiction. Hence, $\dim_H(\mathcal{G}(F^{\alpha}))\geq t_*$, which is the required result.
 \end{proof}
 The following theorem is an immediate application of the Theorem \ref{Thm5.8}.
 
 \begin{theorem}
 Consider $F:I\rightarrow \mathcal{K}_c(\mathbb{R})$ is a set-valued map. If $\lvert \alpha \rvert< \min \{a_i: i\in J\}$, then $\dim_H(\mathcal{G}(F^{\alpha}))=1.$
 \end{theorem}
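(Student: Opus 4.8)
The plan is to derive the result as a direct corollary of Theorem \ref{Thm5.8} by computing the contraction ratios of the maps $W_j$ explicitly under the hypothesis $\lvert \alpha \rvert < \min\{a_i : i \in J\}$, and showing they coincide with the $a_j$'s; this collapses both bounding exponents $t_*$ and $t^*$ to the value $1$. First I would recall, from Lemma \ref{lemma5.1} and the computation inside Proposition \ref{prop5.6}, that
\[
\mathfrak{d}\big(W_j(u,A),W_j(w,B)\big) = a_j \lvert u-w\rvert + \lvert\alpha\rvert\, H_d\big(A+F^{\alpha}(w),B+F^{\alpha}(u)\big).
\]
The key point is that $\mathfrak{d}$ itself has the shape $\lvert u-w\rvert + H_d(\cdots)$, i.e.\ a sum of a ``horizontal'' term and a ``vertical'' term with unit weights, so the map $W_j$ scales the horizontal part by $a_j$ and the vertical part by $\lvert\alpha\rvert$. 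Hence if $\lvert\alpha\rvert \le a_j$ then
\[
\lvert\alpha\rvert\,\mathfrak{d}\big((u,A),(w,B)\big) \le \mathfrak{d}\big(W_j(u,A),W_j(w,B)\big) \le a_j\,\mathfrak{d}\big((u,A),(w,B)\big),
\]
so one may take $r_j = \lvert\alpha\rvert$ (or, more sharply, note both inequalities can be taken with ratio $a_j$ on the upper side; see below) and $R_j = a_j$ in the hypothesis of Theorem \ref{Thm5.8}.

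The delicate step — and the main obstacle — is the lower bound: Theorem \ref{Thm5.8} as stated requires a genuine lower contraction ratio $r_i$ with $\sum_{i=1}^N r_i^{t_*} = 1$, and if I only use $r_j = \lvert\alpha\rvert$ the resulting $t_*$ is too small (it need not be $1$). The fix is to observe that the horizontal component of $W_j$ is exactly the affine map $L_j$ with ratio $a_j$, and therefore the \emph{lower} ratio can also be taken to be $a_j$ whenever $\lvert\alpha\rvert \le a_j$: indeed, for any $(u,A),(w,B)$ with $u \ne w$ one has $\mathfrak{d}(W_j(u,A),W_j(w,B)) \ge a_j\lvert u-w\rvert$, but this alone is not bounded below by $a_j\mathfrak{d}((u,A),(w,B))$ when $u=w$. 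When $u = w$ the metric reduces to $\mathfrak{d}((u,A),(w,B)) = H_d(A,B)$ and $W_j$ scales it by $\lvert\alpha\rvert \le a_j$, so in that degenerate direction the lower ratio is $\lvert\alpha\rvert$, not $a_j$. This means the clean statement ``$r_j = R_j = a_j$'' is false in general, and one must instead argue directly. The cleanest route is: apply Theorem \ref{Thm5.8} with $R_i = a_i$ to get $\dim_H(\mathcal{G}(F^{\alpha})) \le t^*$ where $\sum a_i^{t^*} = 1$; since $\sum_{i=1}^{N-1} a_i = 1$ (the $a_i = \frac{u_{i+1}-u_i}{u_N-u_1}$ sum telescopes to $1$), we get $t^* = 1$, hence $\dim_H(\mathcal{G}(F^{\alpha})) \le 1$. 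For the reverse inequality, note that the projection $\pi : I \times \mathcal{K}_c(\mathbb{R}) \to I$, $\pi(u,A) = u$, satisfies $\lvert\pi(u,A)-\pi(w,B)\rvert \le \mathfrak{d}((u,A),(w,B))$, so $\pi$ is Lipschitz; since $\pi(\mathcal{G}(F^{\alpha})) = I$ has Hausdorff dimension $1$ and Lipschitz maps do not increase Hausdorff dimension, $\dim_H(\mathcal{G}(F^{\alpha})) \ge \dim_H(I) = 1$.

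Combining the two inequalities gives $\dim_H(\mathcal{G}(F^{\alpha})) = 1$, completing the proof. I would present it in this order: (i) recall the contraction identity for $W_j$ from Proposition \ref{prop5.6}; (ii) note $\sum_{i=1}^{N-1} a_i = 1$ by telescoping; (iii) invoke the upper-bound half of Theorem \ref{Thm5.8} (equivalently \cite[Proposition 9.6]{falconer2004fractal}) with $R_i = a_i$ to conclude $\dim_H(\mathcal{G}(F^{\alpha})) \le 1$; (iv) use the Lipschitz projection onto $I$ for the lower bound $\dim_H(\mathcal{G}(F^{\alpha})) \ge 1$. The only subtlety worth flagging explicitly in the write-up is step (iii)'s hypothesis check — one needs $D_{\mathcal{G}}$ and $\mathfrak{d}$ to be bi-Lipschitz equivalent (they are, since $F^{\alpha}$ is uniformly continuous and $H_d(A+F^{\alpha}(w),B+F^{\alpha}(u))$ differs from $H_d(A,B)$ by at most $2\,\omega(\lvert u-w\rvert)$ where $\omega$ is the modulus of continuity of $F^{\alpha}$), so that contraction ratios with respect to $\mathfrak{d}$ transfer to $D_{\mathcal{G}}$, matching the formulation of Theorem \ref{Thm5.8}.
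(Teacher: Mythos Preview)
Your argument is correct and follows the paper's approach almost exactly: the paper's proof invokes Proposition~\ref{prop5.6} to obtain contraction ratio $a_i$ (since $\lvert\alpha\rvert < a_i$ forces $\max\{\lvert\alpha\rvert,a_i\}=a_i$), notes $\sum_{i=1}^{N-1}a_i = 1$, and applies the upper-bound half of Theorem~\ref{Thm5.8} to conclude $\dim_H(\mathcal{G}(F^{\alpha}))\le 1$. The paper then simply writes ``This concludes the proof'' without addressing the lower bound at all; your step~(iv), the Lipschitz projection $\pi(u,A)=u$ onto $I$, is the standard and correct way to supply $\dim_H(\mathcal{G}(F^{\alpha}))\ge 1$, and is an improvement over what the paper actually writes.

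One minor caution: your parenthetical claim that $D_{\mathcal{G}}$ and $\mathfrak{d}$ are bi-Lipschitz equivalent because $F^{\alpha}$ is uniformly continuous is not right as stated --- uniform continuity gives only a modulus-of-continuity comparison, not a linear one, so it yields topological but not bi-Lipschitz equivalence. The paper itself glosses over this discrepancy (it asserts the $D_{\mathcal{G}}$ contraction while citing Proposition~\ref{prop5.6}, which is proved for $\mathfrak{d}$), so you are not in worse shape than the original, but if you want to close this gap cleanly you should either (a) verify the contraction estimate directly for $D_{\mathcal{G}}$, or (b) observe that on the graph itself $\mathfrak{d}\big((u,F^{\alpha}(u)),(w,F^{\alpha}(w))\big)=\lvert u-w\rvert$, so the $\mathfrak{d}$-Hausdorff dimension of $\mathcal{G}(F^{\alpha})$ is trivially $1$, and then argue separately that the identity map $(\mathcal{G}(F^{\alpha}),\mathfrak{d})\to(\mathcal{G}(F^{\alpha}),D_{\mathcal{G}})$ is Lipschitz in the direction needed for the upper bound.
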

 \begin{proof} 
 Using Proposition \ref{prop5.6} for every pair $(u,A),(w,B)\in I\times \mathcal{K}_c(\mathbb{R})$, we have
 \[D_{\mathcal{G}}(W_i(u,A),W_i(w,B))\leq a_iD_{\mathcal{G}}((u,A),(w,B)) \text{ for } i\in J.\]
 Since $\sum\limits_{i=1}^{N-1}a_i=1$, then by Theorem \ref{Thm5.8}, $\dim_H(\mathcal{G}(F^{\alpha}))\leq 1$. This concludes the proof.
 
 \end{proof}
 
   \begin{theorem}\label{Thm5.9}
   If $F:[0,1]\rightarrow \mathcal{K}(\mathbb{R})$ is a set-valued Lipschitz map having Lipschitz constant $l$ and the graph of $F$ is as defined in \eqref{grph}, then $\dim_H(\mathcal{G}(F))=1.$
   \end{theorem}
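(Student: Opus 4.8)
The plan is to show that the parametrization $\phi\colon [0,1]\to \mathcal{G}(F)$, $\phi(u)=(u,F(u))$, is a bi-Lipschitz bijection onto the graph, and then to invoke the bi-Lipschitz invariance of Hausdorff dimension. The Lipschitz hypothesis on $F$ is exactly what is needed to collapse the graph metric $D_{\mathcal{G}}$ of Definition \ref{newg} down to something comparable to the Euclidean metric on the base interval.

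First I would record the one-sided estimate coming from the Lipschitz condition: since $F$ has Lipschitz constant $l$, we have $H_d(F(u),F(w))\le l\lvert u-w\rvert$ for all $u,w\in[0,1]$. Substituting this into the definition of $D_{\mathcal{G}}$ yields
\[
\lvert u-w\rvert \;\le\; D_{\mathcal{G}}\big((u,F(u)),(w,F(w))\big)\;=\;\lvert u-w\rvert+H_d(F(u),F(w))\;\le\;(1+l)\lvert u-w\rvert .
\]
Thus $\phi$ is a bijection from $([0,1],\lvert\cdot\rvert)$ onto $(\mathcal{G}(F),D_{\mathcal{G}})$ satisfying $\lvert u-w\rvert \le D_{\mathcal{G}}(\phi(u),\phi(w))\le (1+l)\lvert u-w\rvert$, i.e. $\phi$ is bi-Lipschitz (with $\phi^{-1}$ also Lipschitz).

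Next I would appeal to the standard fact that a bi-Lipschitz map between metric spaces preserves Hausdorff dimension; this follows directly from how the $t$-dimensional Hausdorff measure scales under such maps (cf.\ the basic properties in \cite{falconer2004fractal}), and, as already noted in the discussion following Theorem \ref{Thm5.7}, the relevant arguments are valid in an arbitrary complete metric space, not just in $\mathbb{R}^n$. Consequently $\dim_H(\mathcal{G}(F))=\dim_H([0,1])=1$, which is the claim.

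There is essentially no substantial obstacle here: the whole content is the observation that Lipschitz continuity of $F$ decouples the two coordinates in $D_{\mathcal{G}}$. The only points worth a sentence of care are that $D_{\mathcal{G}}$ is genuinely a metric (it is a sum of the Euclidean metric and the pullback under $u\mapsto F(u)$ of the Hausdorff metric, hence clearly a metric) and that bi-Lipschitz invariance of $\dim_H$ is being used in the metric-space setting rather than the Euclidean one — both of which are routine.
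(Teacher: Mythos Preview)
Your argument is correct and matches the paper's own proof: define the parametrization $u\mapsto (u,F(u))$, use the Lipschitz hypothesis to get the two-sided estimate on $D_{\mathcal{G}}$, and conclude by bi-Lipschitz invariance of Hausdorff dimension. The only cosmetic difference is that the paper records the lower bound with constant $\tfrac{1}{2}$ rather than your sharper constant $1$, but this is immaterial.
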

   \begin{proof}
   To proof this theorem, it will be sufficient to define a bi-Lipschitz map between $[0,1]$ and $\mathcal{G}(F)$. 
   Define $T:[0,1]\rightarrow \mathcal{G}(F)$ such that $T(u)=(u,F(u))$. Then, we have
   \begin{align}
       D_\mathcal{G}(Tu,Tw)&=D_{\mathcal{G}}((u,F(u)),(w,F(w))) \nonumber\\
       &= \lvert u-w \rvert+ H_d(Fu,Fw)\nonumber\\
       &\leq \lvert u-w \rvert+l\lvert u-w \rvert\nonumber\\
       &\leq (1+l) \lvert u-w \rvert\nonumber,\\
       \text{that is, } D_{\mathcal{G}}(Tu,Tw)&\leq (1+l)\lvert u-w \rvert \label{6.2}
   \end{align}
    and 
    \begin{align}
        D_{\mathcal{G}}(Tu,Tw)&=D_{\mathcal{G}}((u,Fu),(w,Fw))\nonumber\\
        &= \lvert u-w \rvert +H_d(Fu,Fw)\nonumber\\
        \text{that is, }D_{\mathcal{G}}(Tu,Tw)&\geq \frac{1}{2}\lvert u-w \rvert. \label{6.3}
    \end{align}
Equations \eqref{6.2} and \eqref{6.3} will prove the bi-Lipschitz nature of $T$.\\ 
Hence, $\dim_H(\mathcal{G}(F))=1.$   
   \end{proof}
  
\begin{theorem}
Let $F, S \in \mathcal{C}(I, \mathcal{K}_c(\mathbb{R}))$ are Lipschitz functions such that $S(u_1)-F(u_1)=S(u_N)-F(u_N),$ and let $\alpha \in (-1,1)$. Then, $\dim_H(\mathcal{G}(F^{\alpha})) =1$ provided that $  \lvert \alpha \rvert<a:= \min\{a_j: j \in J \} .$
\end{theorem}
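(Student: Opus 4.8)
The plan is to combine the Lipschitz regularity of $F^{\alpha}$ with Theorem~\ref{Thm5.9}. First I would establish that $F^{\alpha} \in \mathcal{L}ip(I, \mathcal{K}_c(\mathbb{R}))$. This should follow from a fixed-point argument analogous to Theorem~\ref{Thm3.5}: one restricts the RB operator $\Phi$ to the closed subset of $\mathcal{L}ip(I,\mathcal{K}_c(\mathbb{R}))$ consisting of maps agreeing with $S$ at the endpoints (in the difference sense), equips it with a suitable complete metric combining $\|\cdot\|_\infty$ and the Lipschitz seminorm, and checks that $\Phi$ is a contraction there precisely when $\frac{|\alpha|}{a} < 1$, i.e. $|\alpha| < a$. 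Indeed this is just the $\sigma = 1$ case of Theorem~\ref{Thm3.5} (since $\mathcal{L}ip = 1\text{-}\mathcal{HC}$), so I can invoke that theorem directly: $F^{\alpha} \in 1\text{-}\mathcal{HC} = \mathcal{L}ip(I,\mathcal{K}_c(\mathbb{R}))$ whenever $|\alpha| < a$.

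Once $F^{\alpha}$ is known to be Lipschitz with some constant $l$ (depending on the Lipschitz constants of $F$ and $S$, on $\alpha$, and on $a$), the conclusion is immediate from Theorem~\ref{Thm5.9}: that theorem asserts that for any set-valued Lipschitz map $G : [0,1] \to \mathcal{K}(\mathbb{R})$, the new graph $\mathcal{G}(G)$ from Definition~\ref{newg} satisfies $\dim_H(\mathcal{G}(G)) = 1$, via the bi-Lipschitz map $u \mapsto (u, G(u))$ between $[0,1]$ and $\mathcal{G}(G)$. Applying this with $G = F^{\alpha}$ yields $\dim_H(\mathcal{G}(F^{\alpha})) = 1$.

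So the proof is short: cite Theorem~\ref{Thm3.5} with $\sigma = 1$ to get $F^{\alpha}$ Lipschitz (the hypothesis $|\alpha| < a = \min\{a_j\}$ is exactly $\frac{|\alpha|}{a^{1}} < 1$), then cite Theorem~\ref{Thm5.9}. I would write something like: ``Since $\mathcal{L}ip(I,\mathcal{K}_c(\mathbb{R}))$ coincides with the H\"older class $1\text{-}\mathcal{HC}$, Theorem~\ref{Thm3.5} applied with $\sigma = 1$ gives $F^{\alpha} \in \mathcal{L}ip(I,\mathcal{K}_c(\mathbb{R}))$, because $\frac{|\alpha|}{a^{1}} < 1$ by hypothesis. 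Now Theorem~\ref{Thm5.9} yields $\dim_H(\mathcal{G}(F^{\alpha})) = 1$.''

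The only genuine subtlety—hence the main obstacle—is making sure the $\sigma = 1$ specialization of Theorem~\ref{Thm3.5} is legitimate, namely that a H\"older map with exponent $1$ is the same as a Lipschitz map and that the metric $H^{(1)}_\sigma$ used there is a complete metric on the Lipschitz space; both are standard, and the paper already records the needed facts. One should also note that Theorem~\ref{Thm3.5} requires $F, S \in \mathcal{HC}^{\sigma}$ with the endpoint compatibility $S(u_1) - F(u_1) = S(u_N) - F(u_N)$, which are precisely the hypotheses in force here (Lipschitz functions with that compatibility condition), so no extra assumptions are needed. If one prefers not to route through Theorem~\ref{Thm3.5}, the alternative is to redo the contraction-mapping estimate directly for the Lipschitz seminorm, but that merely reproduces the $\sigma = 1$ computation already done there, so invoking Theorem~\ref{Thm3.5} is cleaner.
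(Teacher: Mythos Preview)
Your proposal is correct and is exactly the paper's approach: the paper's proof is the one-line remark ``In view of Theorem~\ref{Thm5.9} and Theorem~\ref{Thm3.5}, the proof follows, hence we omit.'' Your observation that the case $\sigma=1$ of Theorem~\ref{Thm3.5} yields $F^{\alpha}$ Lipschitz under $\lvert\alpha\rvert<a$, followed by an application of Theorem~\ref{Thm5.9}, is precisely what is intended.
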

   \begin{proof}
In view of Theorem \ref{Thm5.9} and Theorem \ref{Thm3.5}, the proof follows, hence we omit.   
   \end{proof}
   \begin{lemma}\label{lipdim}
   Let $F,T:[0,1]\rightarrow \mathcal{K}(\mathbb{R})$ be set-valued Lipschitz map with Lipschitz constant $l$, then $\dim_H(\mathcal{G}(F+T)) = \dim_H(\mathcal{G}(T))$, where $(F+T)(u):=F(u)+T(u)$ and $F(u)+T(u)$ denotes the Minkowski sum of $F(u)$ and $T(u)$.
   \end{lemma}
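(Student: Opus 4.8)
The plan is to exhibit an explicit bi-Lipschitz bijection between the two graphs (with respect to the metric $D_{\mathcal{G}}$ of Definition \ref{newg}) and then invoke the fact that bi-Lipschitz maps preserve Hausdorff dimension, exactly as in the proof of Theorem \ref{Thm5.9}. Define $\Psi:\mathcal{G}(T)\to\mathcal{G}(F+T)$ by $\Psi\big(u,T(u)\big)=\big(u,F(u)+T(u)\big)$. Since $\Psi$ leaves the first coordinate unchanged it is a bijection, with inverse $\big(u,(F+T)(u)\big)\mapsto\big(u,T(u)\big)$, so the whole matter reduces to comparing $D_{\mathcal{G}}\big(\Psi(u,T(u)),\Psi(w,T(w))\big)$ with $D_{\mathcal{G}}\big((u,T(u)),(w,T(w))\big)$ for arbitrary $u,w\in[0,1]$.

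For the upper estimate I would combine the subadditivity of the Hausdorff metric under the Minkowski sum (Note \ref{new71}(1)) with the Lipschitz bound on $F$:
\begin{align*}
D_{\mathcal{G}}\big(\Psi(u,T(u)),\Psi(w,T(w))\big)&=|u-w|+H_d\big(F(u)+T(u),\,F(w)+T(w)\big)\\
&\le |u-w|+H_d\big(F(u),F(w)\big)+H_d\big(T(u),T(w)\big)\\
&\le (1+l)|u-w|+H_d\big(T(u),T(w)\big)\le (1+l)\,D_{\mathcal{G}}\big((u,T(u)),(w,T(w))\big).
\end{align*}
This already gives that $\Psi$ is Lipschitz, hence $\dim_H(\mathcal{G}(F+T))\le\dim_H(\mathcal{G}(T))$.

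The delicate point, and the only real obstacle, is the reverse inequality: the naive move would be to "cancel" the summand $F$ via an identity of the form $H_d(A+C,B+C)=H_d(A,B)$, but this is false for general non-convex members of $\mathcal{K}(\mathbb{R})$ (it holds only in the convex setting, which is why it was invoked for $\mathcal{K}_c(\mathbb{R})$ earlier). I would sidestep it completely by using the Lipschitz bound on $T$ rather than cancellation: since $H_d\big(T(u),T(w)\big)\le l|u-w|$ and $|u-w|\le D_{\mathcal{G}}\big(\Psi(u,T(u)),\Psi(w,T(w))\big)$,
\[
D_{\mathcal{G}}\big((u,T(u)),(w,T(w))\big)=|u-w|+H_d\big(T(u),T(w)\big)\le (1+l)|u-w|\le (1+l)\,D_{\mathcal{G}}\big(\Psi(u,T(u)),\Psi(w,T(w))\big).
\]

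Putting the two estimates together shows $\Psi$ is bi-Lipschitz (with constants $1+l$ and $1/(1+l)$), and therefore $\dim_H(\mathcal{G}(F+T))=\dim_H(\mathcal{G}(T))$. As an aside worth recording, one could instead note that $F+T$ is Lipschitz with constant $2l$ by Note \ref{new71}(1) and apply Theorem \ref{Thm5.9} twice to conclude that both dimensions equal $1$; I would nonetheless present the bi-Lipschitz map $\Psi$ explicitly, since this is the form that will be reused when comparing the new graphs of more general perturbations.
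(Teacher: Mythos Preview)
Your proof is correct for the statement as written, and your bi-Lipschitz argument via the Lipschitz bound on $T$ is cleaner than what the paper does. The paper defines the same map $\Phi=\Psi$ and obtains the same upper Lipschitz estimate, but for the reverse inequality it does \emph{not} establish that $\Phi$ is bi-Lipschitz; instead it works directly with Hausdorff measures, using the inclusion $\mathcal{G}(T)\subseteq \mathcal{G}(F+T)+\{(0,-F(u)):u\in I\}$ to turn any $\eta$-cover $\{U_n\}$ of $\mathcal{G}(F+T)$ into a cover $\{V_n\}$ of $\mathcal{G}(T)$ with $|V_n|\le(1+l)|U_n|$, where only the Lipschitz constant of $F$ enters. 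The point of this detour is that the paper's argument never uses the Lipschitz hypothesis on $T$: it really proves $\dim_H(\mathcal{G}(F+T))=\dim_H(\mathcal{G}(T))$ assuming only $F$ Lipschitz, and that is precisely the strength needed in Theorem~\ref{densethm}, where the second summand is a rescaling of a map of graph dimension $\beta>1$ and hence not Lipschitz. Your closing aside---that under the stated hypotheses both dimensions equal $1$ by Theorem~\ref{Thm5.9}---is entirely correct and shows that the lemma, read literally, is trivial; it also explains why your bi-Lipschitz route, while valid here, does not generalise to the setting the paper actually requires.
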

   \begin{proof}
   To establish the proof of this lemma, it will be sufficient to show the existence of a Lipschitz map from $\mathcal{G}(T)$ to $\mathcal{G}(F+T)$.
   Define $\Phi : \mathcal{G}(T)\rightarrow \mathcal{G}(F+T)$ such that $\Phi (u,T(u))=(u, F(u)+T(u))$. It is easy to see that $\Phi$ is well defined and onto. Now to get its Lipschitz behaviour, we have
   \begin{align*}
       D_{\mathcal{G}}(\Phi (u,T(u)),\Phi (w,T(w)))&=D_{\mathcal{G}}((u,F(u)+T(u)),(w,F(w)+T(w)))\\
       &= \lvert u-w \rvert + H_d(F(u)+T(u), F(w)+T(w))\\
       &\leq \lvert u-w \rvert + H_d(F(u),F(w))+ H_d(T(u),T(w))\\
       &\leq \lvert u-w \rvert + l \lvert u-w \rvert +H_d(T(u),T(w))\\
       &\leq (1+l) \left \{\lvert u-w \rvert + H_d(T(u),T(w))\right\}.
   \end{align*}
   That is, $D_{\mathcal{G}}(\Phi (u,T(u)),\Phi (w,T(w))) \leq (1+l) D_{\mathcal{G}}((u,T(u)),(w,T(w))).$
   Hence, $\Phi $ being a Lipschitz map implying that 
   \[ \dim_H(\mathcal{G}(F+T)) \le \dim_H(\mathcal{G}(T)) \text{ and }  \dim_B(\mathcal{G}(F+T)) \le \dim_B(\mathcal{G}(T)) .\] \\
   For other side of the inequality, let $t> \dim_H(\mathcal{G}(F+T)).$ Then, by definition of the Hausdorff dimension, we have the following.\\
   For each $\epsilon>0$ and for each $\eta>0,$ there is an open cover $\{U_n:n \in \mathbb{N}\} $ of $\mathcal{G}(F+T))$ such that $\lvert U_n\rvert  < \eta$ and $\sum_{n \in \mathbb{N}} \lvert U_n\rvert^s < \epsilon.$
   Note that 
   \begin{equation}\label{sept26}
   \begin{aligned}
       \mathcal{G}(T)=& \{(u,T(u)): u \in I\}\\ \subseteq & \{ (u, T(u)+F(u)-F(u)): u \in I\} \\ \subseteq & \{(u,T(u)+F(u)): u \in I\}+ \{ (0, -F(u)):u \in I\}\\ = & \mathcal{G}(F+T))+ \{ (0, -F(u)):u \in I\}.
       \end{aligned}
       \end{equation}
    
   Define $V_n=U_n+ \{(0,-F(u)):u \in I \text{ such that } (u,T(u)+F(u)) \in U_n\}.$ Observe that each $V_n$ is open and $\mathcal{G}(T) \subseteq \underset{n \in \mathbb{N}}{\cup} V_n.$ Further, $\lvert V_n\rvert  \le (1+l)\lvert U_n\rvert $ and $\lvert V_n\rvert < (1+l)\eta.$ Then, 
   \[ \sum_{n \in \mathbb{N}} \lvert V_n\rvert^t \le (1+l)^t \sum_{n \in \mathbb{N}} \lvert U_n\rvert^t \le  (1+l)^t\epsilon.
   \]
   This gives  $\mathcal{H}_{\eta}^t(\mathcal{G}(T))=0,$ that is, the $t$-dimensional Hausdorff measure, $\mathcal{H}^t(\mathcal{G}(T))=0.$ Therefore, we have $\dim_H(\mathcal{G}(T))\le \dim_H(\mathcal{G}(F+T)) ,$ proving $\dim_H(\mathcal{G}(T))= \dim_H(\mathcal{G}(F+T)) .$ 
   Next, using \eqref{sept26}, we obtain
\begin{align*}
  \overline{\dim}_B(\mathcal{G}(T))&=\varlimsup_{\delta \rightarrow 0} \frac{\log N_{(1+l) \eta}(\mathcal{G}(T))}{- \log ((1+l) \eta)}\\
  &\le \varlimsup_{\eta \rightarrow 0} \frac{\log N_{ \eta}(\mathcal{G}(F+T))}{- \log ((1+l) \eta)}= \varlimsup_{\eta \rightarrow 0} \frac{\log N_{\eta}(\mathcal{G}(F+T))}{- \log( \eta)}=\overline{\dim}_B(\mathcal{G}(F+T)),
\end{align*}
as desired.
    \end{proof}
    
    \begin{remark}
The above lemma holds for single-valued maps also (see for instance \cite[Lemma $3.2$]{verma2020bivariate}), but proof of this is neither straight forward nor just a simple extension of single-valued map. Because Hausdorff metric does not satisfy the parallelogram law while in \cite[Lemma $3.2$]{verma2020bivariate} metric is usual metric defined on $\mathbb{R}^{n}$ which satisfies the parallelogram law and gives the privilege to enjoy the bi-Lipschitz property to $T$ defined in \cite[Lemma $3.2$]{verma2020bivariate} ($\Phi$ in our case).
    \end{remark}
    
    In view of the Lipschitz invariance property of dimension, one may conclude that the upcoming theorem holds for all aforementioned dimensions.

\begin{theorem}\label{densethm}
Consider $1 \leq \beta $. Then, set  $\mathcal{S}_{\beta}:=\{F\in  \mathcal{C}(I,\mathcal{K}(\mathbb{R})): \dim_H (\mathcal{G}(F)) = \beta\}$ is dense in $\mathcal{C}(I,\mathcal{K}(\mathbb{R})).$
\end{theorem}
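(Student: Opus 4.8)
The plan is to prove density directly: given $F\in\mathcal{C}(I,\mathcal{K}(\mathbb{R}))$ and $\epsilon>0$ I would produce $G\in\mathcal{S}_{\beta}$ with $\lVert F-G\rVert_{\infty}<\epsilon$, and I would take $G$ in the form $G=P+\Psi_{\beta}$ (pointwise Minkowski sum), where $P$ is a Lipschitz set-valued map very close to $F$ and $\Psi_{\beta}$ is a fixed ``model'' continuous set-valued map of tiny amplitude whose graph (in the sense of Definition \ref{newg}) already has Hausdorff dimension $\beta$. The mechanism that makes this work is that adding a Lipschitz map does not change the dimension of the new graph.

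First I would record the two routine ingredients. (i) Lipschitz set-valued maps are dense in $\mathcal{C}(I,\mathcal{K}(\mathbb{R}))$: a set-valued Bernstein polynomial (cf. \cite{vitale1978approximation}), or the fractal polynomial of Theorem \ref{BFOTHM5}, gives a Lipschitz $P$ with $\lVert F-P\rVert_{\infty}<\epsilon/2$; by Theorem \ref{Thm5.9}, $\dim_H(\mathcal{G}(P))=1$. (ii) The proof of Lemma \ref{lipdim} in fact uses only that \emph{one} of the two summands is Lipschitz: the surjection $(u,T(u))\mapsto(u,(P+T)(u))$ from $\mathcal{G}(T)$ onto $\mathcal{G}(P+T)$ is Lipschitz because $H_d((P+T)(u),(P+T)(w))\le H_d(P(u),P(w))+H_d(T(u),T(w))$ by Note \ref{new71}, whence $\dim_H(\mathcal{G}(P+T))\le\dim_H(\mathcal{G}(T))$; and the covering argument in that proof only needs $P$ Lipschitz to get $\lvert V_n\rvert\le(1+l)\lvert U_n\rvert$, which yields the reverse inequality. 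So for Lipschitz $P$ and arbitrary $T\in\mathcal{C}(I,\mathcal{K}(\mathbb{R}))$ one has $\dim_H(\mathcal{G}(P+T))=\dim_H(\mathcal{G}(T))$, and likewise for the box dimensions.

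Granting a model map $\Psi_{\beta}\in\mathcal{C}(I,\mathcal{K}(\mathbb{R}))$ with $\lVert\Psi_{\beta}\rVert_{\infty}<\epsilon/2$ and $\dim_H(\mathcal{G}(\Psi_{\beta}))=\beta$, put $G:=P+\Psi_{\beta}$. By Note \ref{new71}, $H_d(P(u),P(u)+\Psi_{\beta}(u))\le H_d(\{0\},\Psi_{\beta}(u))$, so $\lVert P-G\rVert_{\infty}\le\lVert\Psi_{\beta}\rVert_{\infty}<\epsilon/2$ and hence $\lVert F-G\rVert_{\infty}<\epsilon$; and by (ii), $\dim_H(\mathcal{G}(G))=\dim_H(\mathcal{G}(\Psi_{\beta}))=\beta$, i.e. $G\in\mathcal{S}_{\beta}$. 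It remains to build $\Psi_{\beta}$. For $\beta\in[1,2]$ take $\Psi_{\beta}(u)=\{\psi_{\beta}(u)\}$ singleton-valued, where $\psi_{\beta}:I\to\mathbb{R}$ is continuous with arbitrarily small sup-norm and $\dim_H(\mathrm{graph}\,\psi_{\beta})=\beta$ (classical: $\psi_{1}\equiv 0$; for $\beta\in(1,2)$ a small multiple of a Weierstrass-type function; for $\beta=2$ a continuous function with $2$-dimensional graph). Since $D_{\mathcal{G}}((u,\{\psi_{\beta}(u)\}),(w,\{\psi_{\beta}(w)\}))=\lvert u-w\rvert+\lvert\psi_{\beta}(u)-\psi_{\beta}(w)\rvert$, the space $\mathcal{G}(\Psi_{\beta})$ is bi-Lipschitz equivalent to $\mathrm{graph}\,\psi_{\beta}\subset\mathbb{R}^{2}$, hence of dimension $\beta$. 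For $\beta>2$ pick an integer $n\ge\beta$ and a tiny-amplitude vector-valued continuous $\vec{\psi}=(\psi_1,\dots,\psi_n):I\to\mathbb{R}^{n}$ whose graph $\{(u,\vec{\psi}(u)):u\in I\}\subset\mathbb{R}^{n+1}$ has Hausdorff dimension $\beta$ (a H\"older-exponent calibration of a vector Weierstrass-type function realises every value in $[1,n+1]$), and realise it set-valued by $\Psi_{\beta}(u)=\bigcup_{i=1}^{n}\big(\psi_i(u)+J_i\big)$ with $J_1,\dots,J_n$ fixed pairwise disjoint tiny intervals packed inside an interval of length $<\epsilon/4$, choosing $\max_i\lVert\psi_i\rVert_{\infty}$ so small relative to the gaps between the $J_i$ that the $i$-th component of $\Psi_{\beta}(u)$ is always the nearest one to the $i$-th component of $\Psi_{\beta}(w)$; then $H_d(\Psi_{\beta}(u),\Psi_{\beta}(w))=\max_{1\le i\le n}\lvert\psi_i(u)-\psi_i(w)\rvert$ for all $u,w$ (each summand being a translate of a fixed interval), so $\mathcal{G}(\Psi_{\beta})$ is bi-Lipschitz equivalent to $\mathrm{graph}\,\vec{\psi}$ and again has dimension $\beta$.

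The routine parts are the density of Lipschitz maps and the Lipschitz-invariance of the new-graph dimension (essentially the content of Lemma \ref{lipdim}, applied to only one Lipschitz summand). The genuine obstacle is the construction of the model map $\Psi_{\beta}$: for $\beta\le 2$ it is merely the classical existence of continuous functions with graph of prescribed Hausdorff dimension, transported through the isometric singleton embedding; but for $\beta>2$ it relies on the (known, yet non-trivial) fact that graphs of vector-valued continuous functions attain every Hausdorff dimension in $[1,n+1]$, together with the separation bookkeeping required to pass isometrically from a vector graph to a set-valued graph. Finally, since the single map $G$ produced above realises $\dim_H(\mathcal{G}(G))=\beta$ and, by Lipschitz invariance of the Hausdorff, box and packing dimensions, the same value for all of them at once, the density assertion holds verbatim for each of the dimensions mentioned in the paper.
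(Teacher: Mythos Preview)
Your proof is correct and follows essentially the same route as the paper: approximate $F$ by a Lipschitz set-valued map, add a small-amplitude model function whose new graph has the prescribed dimension, and transfer the dimension via Lemma \ref{lipdim} (which, as you rightly note, only needs \emph{one} of the two summands to be Lipschitz --- exactly how the paper tacitly uses it). The only difference is that the paper simply \emph{assumes} the existence of a nonzero $H\in\mathcal{S}_\beta$ and rescales it by $\epsilon/(2\lVert H\rVert_\infty)$, whereas you go further and explicitly construct the model $\Psi_\beta$ --- via the singleton embedding of a classical function for $\beta\le 2$, and via a separated multi-interval encoding of a vector-valued graph for $\beta>2$ --- thereby supplying the one ingredient the paper leaves implicit.
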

\begin{proof}
   Let $F\in \mathcal{C}(I,\mathcal{K}(\mathbb{R}))$ and $\epsilon>0.$ Using the density of $\mathcal{L}ip(I,\mathcal{K}(\mathbb{R}))$ in $\mathcal{C}(I,\mathcal{K}(\mathbb{R}))$, there exists $G$ in $\mathcal{L}ip (I,\mathcal{K}(\mathbb{R}))$ such that $$ \lVert F-G \rVert _{\infty} < \frac{\epsilon}{2}.$$ Further, we consider a non-vanishing function $H \in \mathcal{S}_{\beta}.$ Let $H_*= G +\frac{\epsilon}{2\lVert H\rVert _{\infty}}H,$ which immediately gives 
   \[\lVert G-H_*\rVert _{\infty} \le  \frac{\epsilon}{2}.\]
   This together with Lemma \ref{lipdim} implies that $\dim(Gr(H_*))=\dim(Gr(H))=\beta.$ Hence, we have $H_* \in \mathcal{S}_{\beta}$ and $$ \lVert F-H_*\rVert _{\infty} \le  \lVert F-G \rVert _{\infty} + \lVert G-H_*\rVert _{\infty} < \epsilon .$$ This completes the proof.
\end{proof}   
    
    Before proving our next result, let us note the following lemma as a prelude.
    \begin{lemma}\label{new921}
   Consider $A,B,C$ are compact subsets of $\mathbb{R}.$ Then, $$H_d(AB,CB) \le \sup_{b \in B} \lvert b \rvert H_d(A,C),$$ 
    where $YZ=\left\{yz: y\in Y\in \mathcal{K}(\mathbb{R}),~ z\in Z\in \mathcal{K}(\mathbb{R})\right\}.$
    \end{lemma}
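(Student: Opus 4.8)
The plan is to estimate the Hausdorff distance $H_d(AB, CB)$ directly from the definition by fixing an element of $AB$, approximating it by a nearby element of $CB$, and vice versa. Write a typical element of $AB$ as $ab$ with $a \in A$, $b \in B$. For a suitable $c \in C$ chosen close to $a$ (namely with $\lvert a - c\rvert$ nearly realizing $\inf_{c' \in C}\lvert a - c'\rvert$), the element $cb \in CB$ satisfies $\lvert ab - cb\rvert = \lvert b\rvert\,\lvert a - c\rvert \le \sup_{b' \in B}\lvert b'\rvert\cdot \lvert a-c\rvert$. Taking infimum over $c \in C$ and then supremum over $ab \in AB$ gives $\sup_{ab \in AB}\inf_{cb' \in CB}\lvert ab - cb'\rvert \le \sup_{b \in B}\lvert b\rvert \cdot \sup_{a \in A}\inf_{c \in C}\lvert a - c\rvert$. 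By the symmetric argument (swapping the roles of $A$ and $C$), the other one-sided sup-inf term is bounded by $\sup_{b\in B}\lvert b\rvert \cdot \sup_{c \in C}\inf_{a \in A}\lvert c - a\rvert$. Taking the maximum of the two bounds and recognizing the two sup-inf expressions as the two halves of $H_d(A,C)$ yields the claim.

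First I would note that compactness of $B$ guarantees $\sup_{b \in B}\lvert b\rvert < \infty$ (so the right-hand side makes sense), and compactness of $C$ guarantees the infimum $\inf_{c \in C}\lvert a - c\rvert$ is attained, which is what lets me pick the approximating $c$ cleanly; strictly one does not even need attainment, since one can work with an element within $\varepsilon$ of the infimum and let $\varepsilon \to 0$. I would also observe that $AB$ and $CB$ are themselves compact (continuous image of a compact set under multiplication), so $H_d(AB, CB)$ is well-defined in $\mathcal{K}(\mathbb{R})$; this is a routine remark but worth stating since the lemma is phrased in terms of $H_d$ on $\mathcal{K}(\mathbb{R})$.

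The only mild subtlety — and the closest thing to an obstacle — is that the factor $b$ appearing in $ab$ and in $cb$ must be the \emph{same} $b$, so that the difference factors as $b(a-c)$; one must be careful not to let the two elements of $B$ drift apart. This is handled simply by, given $ab \in AB$, choosing the competitor in $CB$ to be $cb$ with the \emph{same} $b$ and a well-chosen $c$, rather than an arbitrary element of $CB$. After that the estimate $\lvert b\rvert \le \sup_{b' \in B}\lvert b'\rvert$ is immediate and the rest is bookkeeping with sup and inf. No deeper idea is required; the lemma is a direct computation once the factoring is set up correctly.
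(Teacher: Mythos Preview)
Your proposal is correct and follows essentially the same route as the paper's own proof: restrict the infimum over $CB$ to elements $cb$ sharing the same factor $b$ as the given $ab$, factor $\lvert ab-cb\rvert=\lvert b\rvert\,\lvert a-c\rvert$, bound $\lvert b\rvert$ by $\sup_{b\in B}\lvert b\rvert$, and then identify the remaining sup--inf expressions with the two halves of $H_d(A,C)$. Your added remarks on compactness (finiteness of $\sup\lvert b\rvert$, attainment of infima, and $AB,CB\in\mathcal{K}(\mathbb{R})$) are correct and make the argument slightly more self-contained than the paper's version.
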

    \begin{proof}
    We have 
    \begin{align*}
    H_d(AB,CB)& = \max\Big\{ \sup_{ab \in AB} \inf_{cb' \in CB} \lvert ab -cb'\rvert , \sup_{cb' \in CB} \inf_{ab \in AB} \lvert cb'-ab \rvert \Big\}\\
    & \le \max\Big\{ \sup_{ab \in AB} \inf_{cb \in Cb} \lvert ab -cb\rvert , \sup_{cb' \in CB} \inf_{ab' \in Ab'} \lvert cb'-ab'\rvert \Big\}\\
    & \le \max\Big\{ \sup_{ab \in AB} \inf_{cb \in Cb}\lvert b \rvert \lvert a -c\rvert , \sup_{cb' \in CB} \inf_{ab' \in Ab'}\lvert b' \rvert \lvert c-a \rvert\Big\}\\
    & \le \max\Big\{ \sup_{a \in A, b \in B}(\lvert b \rvert \inf_{cb' \in CB} \lvert a -c\rvert ), \sup_{c \in C, b' \in B} (\lvert b' \rvert \inf_{ab' \in Ab'} \lvert c-a \rvert)\Big\}\\
    & \le \sup_{ b \in B}\lvert b \rvert  \max\Big\{ \sup_{a \in A} \inf_{c \in C} \lvert a -c\rvert , \sup_{c \in C}  \inf_{a \in A} \lvert c-a \rvert\Big\} \\
    & = \sup_{ b \in B}\lvert b \rvert H_d(A,C),
    \end{align*}
    proving the assertion.
    \end{proof}
    
    Next we define the multiplication of set-valued maps $F,L:W \subseteq \mathbb{R} \rightrightarrows \mathbb{R}$ by $(FT)(w)=F(w)T(w).$
    \begin{lemma}\label{Lem5.14}
     Consider $F ,T:[0,1] \rightarrow \mathcal{K}(\mathbb{R})$ to be set-valued  Lipschitz map with Lipschitz constant $l$. Then, 
     $$ \dim_H (\mathcal{G}(FT)) \le \dim_H (\mathcal{G}(T)).$$
    \end{lemma}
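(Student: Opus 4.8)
The plan is to follow the template of Lemma~\ref{lipdim}: produce a Lipschitz surjection $\Psi:\mathcal{G}(T)\to\mathcal{G}(FT)$ and then invoke the fact that a Lipschitz map cannot raise the Hausdorff dimension. I would define $\Psi(u,T(u))=\big(u,F(u)T(u)\big)$. This is well defined since the first coordinate pins down the point of $\mathcal{G}(T)$, it is evidently onto, and $F(u)T(u)\in\mathcal{K}(\mathbb{R})$ because the product of two compact subsets of $\mathbb{R}$ is the continuous image of $F(u)\times T(u)$ under multiplication, hence compact. Note also that $F$ and $T$, being continuous $\mathcal{K}(\mathbb{R})$-valued maps on the compact interval $[0,1]$, are bounded, so $\lVert F\rVert_\infty$ and $\lVert T\rVert_\infty$ are finite.

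The key step is the Lipschitz estimate. I would split
\[
H_d\big(F(u)T(u),F(w)T(w)\big)\le H_d\big(F(u)T(u),F(w)T(u)\big)+H_d\big(F(w)T(u),F(w)T(w)\big)
\]
via the triangle inequality for $H_d$ and the intermediate set $F(w)T(u)$. Lemma~\ref{new921} applied to the first term (common factor $T(u)$) gives $H_d(F(u)T(u),F(w)T(u))\le \big(\sup_{t\in T(u)}\lvert t\rvert\big)H_d(F(u),F(w))\le \lVert T\rVert_\infty\, l\,\lvert u-w\rvert$, using that $F$ is $l$-Lipschitz and $\sup_{t\in T(u)}\lvert t\rvert=H_d(T(u),\{0\})\le\lVert T\rVert_\infty$. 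The same lemma applied to the second term (common factor $F(w)$) gives $H_d(F(w)T(u),F(w)T(w))\le \lVert F\rVert_\infty\, H_d(T(u),T(w))$. Adding $\lvert u-w\rvert$ and collecting terms yields
\[
D_{\mathcal{G}}\big(\Psi(u,T(u)),\Psi(w,T(w))\big)\le\big(1+l\lVert T\rVert_\infty+\lVert F\rVert_\infty\big)\Big(\lvert u-w\rvert+H_d(T(u),T(w))\Big),
\]
so $\Psi$ is Lipschitz. Since Lipschitz images do not increase Hausdorff dimension --- valid in a general complete metric space, as recalled before Theorem~\ref{Thm5.8} --- it follows that $\dim_H(\mathcal{G}(FT))\le\dim_H(\mathcal{G}(T))$.

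I do not expect a serious obstacle: once Lemma~\ref{new921} is available the computation is bookkeeping with the Hausdorff metric. The only conceptual point is that, unlike Lemma~\ref{lipdim}, one cannot hope to reverse the inequality here, because $\Psi$ need not be bi-Lipschitz --- the product $F(u)T(u)$ can collapse (for instance where $F(u)=\{0\}$) --- which is precisely why the statement is an inequality rather than an equality. The mild technical care needed is to check boundedness of $F$ and $T$ so that the constant $1+l\lVert T\rVert_\infty+\lVert F\rVert_\infty$ is finite, and to note that $\sup_{a\in A}\lvert a\rvert=H_d(A,\{0\})$ for any $A\in\mathcal{K}(\mathbb{R})$.
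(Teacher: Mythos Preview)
Your proposal is correct and follows essentially the same route as the paper: define the surjection $\Psi(u,T(u))=(u,F(u)T(u))$, split $H_d(F(u)T(u),F(w)T(w))$ via the intermediate set $F(w)T(u)$, apply Lemma~\ref{new921} to each piece, and use the Lipschitz property of $F$ to absorb the first piece into $\lvert u-w\rvert$. The only cosmetic difference is the Lipschitz constant --- the paper uses $\max\{1+l\lVert T\rVert_\infty,\lVert F\rVert_\infty\}$ where you use the slightly larger $1+l\lVert T\rVert_\infty+\lVert F\rVert_\infty$ --- which is immaterial.
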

     \begin{proof}
     Define $\Phi  : \mathcal{G}(T) \rightarrow \mathcal{G}(FT) $ such that
     \[\Phi \big((u,T(u))\big)=\big(u,F(u)T(u)\big).\]
     Choose $M=\max \{1+l \underset{z\in \underset{u\in[0,1]}{\cup} Tu}{\sup}\lvert z \rvert, \underset{v\in \underset{w\in[0,1]}{\cup} Fw}{\sup}\lvert v \rvert \}.$\\
     Notice that $\Phi$ is well defined and surjective. To prove our lemma, it is enough to prove $\Phi$ is Lipschitz map. For this,
     \begin{align*}
          D_{\mathcal{G}}(\Phi (u,Tu),\Phi (w,Tw))&=D_{\mathcal{G}}((u,FuTu),(w,FwTw))\\
       &= \lvert u-w \rvert + H_d(FuTu, FwTw)\\
       &\leq \lvert u-w \rvert +H_d(FuTu,FwTu)+H_d(FwTu,FwTw)\\
       &\leq \lvert u-w \rvert +\underset{z\in Tu}{\sup}\lvert z\rvert H_d(Fu,Fw)+\underset{v\in Fw}{\sup}\lvert v\rvert H_d(Tu,Tw)\\
       &\leq \lvert u-w \rvert +\underset{z\in Tu}{\sup}\lvert z\rvert l \lvert u-w \rvert+\underset{v\in Fw}{\sup}\lvert v\rvert H_d(Tu,Tw)\\
       &\leq M \left \{\lvert u-w \rvert + H_d(Tu, Tw)\right\}.
       \end{align*}
       Hence, $D_{\mathcal{G}}(\Phi (u,Tu),\Phi (w,Tw)) \leq M D_{\mathcal{G}}((u,Tu),(w,Tw)).$\\
This completes the proof.
     \end{proof}

     \begin{remark}
       In the Lemma \ref{Lem5.14}, equality may not hold in general. For instance, consider $T$ be a Weierstrass function whose Hausdorff dimension is strictly greater than 1 (refer \cite{shen2018hausdorff}) and $F$ to be the zero function. Then, we obtain $1=\dim_H(\mathcal{G}(FT)) < \dim_H (\mathcal{G}(T)).$
     \end{remark}

   \begin{definition}
  Consider $W$ be a bounded and closed interval of $\mathbb{R}$ and $F: W \rightrightarrows  \mathbb{R}$ is a set-valued map. The maximum range of $F$ over the rectangle $W$ is defined as
   $$ R_F[W]= \sup_{x,y \in W} ~\sup_{w,z \in F(x)\cup F(y)}\lvert w-z\rvert .$$
   \end{definition}
   As indicated in the introductory section, next we shall provide a set-valued analogue of \cite[Proposition $11.1$]{falconer2004fractal}.
   \begin{proposition} \label{BBVL2}
   Assume $F :[w,u]  \rightrightarrows \mathbb{R}$ be a set-valued continuous map, $ 0 < \eta < u-w,$ and $ \frac{u-w}{\eta} \le m \le 1+ \frac{u-w}{\eta }$ for some $m  \in \mathbb{N}.$  If $N_{\eta}(G_F)$ is the number of $\eta$-boxes that intersect the graph of $F,$  then 
\[ \frac{1}{\eta}  \sum_{i=1}^{m} R_F[W_{i}] \le N_{\eta}(G_F) \le 2m + \frac{1}{\eta}  \sum_{i=1}^{m} R_F[W_{i}],\]
where $W_i=[i\eta,(i+1)\eta]$.
   \end{proposition}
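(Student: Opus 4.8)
The plan is to mimic the classical box-counting argument for single-valued continuous functions (Falconer, Proposition 11.1), replacing the oscillation of $f$ on a subinterval by the quantity $R_F[W_i]$, which plays the role of the range of $F$ over $W_i$. The key point to check is that $R_F[W_i]$ genuinely controls how many $\eta$-boxes in the $i$-th column are needed to (and must be) hit by the graph $G_F$.

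\emph{Setup.} Partition $[w,u]$ into the $m$ subintervals $W_i=[i\eta,(i+1)\eta]$ (with the index bookkeeping arranged so the inequality $\frac{u-w}{\eta}\le m\le 1+\frac{u-w}{\eta}$ guarantees they cover $[w,u]$). Over the column $W_i\times\mathbb R$ the relevant part of $G_F$ is $\{(x,y):x\in W_i,\ y\in F(x)\}$. Since $F$ is continuous on the compact interval $W_i$, the set $\bigcup_{x\in W_i}F(x)$ is a compact subset of $\mathbb R$, and $R_F[W_i]=\sup_{x,y\in W_i}\sup_{w',z\in F(x)\cup F(y)}|w'-z|$ is exactly its diameter. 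Call this vertical extent $\ell_i:=R_F[W_i]$; then $\ell_i$ is finite and the portion of $G_F$ over $W_i$ is contained in a rectangle of width $\eta$ and height $\ell_i$.

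\emph{Upper bound.} A rectangle of height $\ell_i$ can be covered by at most $\lceil \ell_i/\eta\rceil+1\le \ell_i/\eta+2$ boxes of side $\eta$ stacked vertically (the ``$+1$'' and ``$+2$'' absorb the ceiling and the possible offset of the grid). Summing over the $m$ columns gives
\[
N_\eta(G_F)\le \sum_{i=1}^{m}\Bigl(\frac{\ell_i}{\eta}+2\Bigr)=2m+\frac{1}{\eta}\sum_{i=1}^{m}R_F[W_i].
\]

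\emph{Lower bound.} Fix $i$ and pick $x_*,y_*\in W_i$ and points in $F(x_*)\cup F(y_*)$ realising (or nearly realising) the supremum $\ell_i$. These points lie on $G_F$ and have vertical separation $\ell_i$ while their horizontal coordinates both lie in the interval $W_i$ of length $\eta$. Any column of $\eta$-boxes covering $G_F\cap(W_i\times\mathbb R)$ must therefore contain at least $\ell_i/\eta$ boxes just to span that vertical gap. A box of side $\eta$ over $W_i$ meets at most two of the columns indexed by consecutive $W_j$, so the boxes counted in this way for distinct $i$ are ``shared'' among at most two columns; accounting for this overlap yields $N_\eta(G_F)\ge \frac{1}{\eta}\sum_{i=1}^{m}R_F[W_i]$ (the $\tfrac12$-type loss is exactly why the lower bound has no extra additive term and is the weaker side). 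One then lets the near-optimal choices tend to the suprema and passes to the limit to recover $R_F[W_i]$ exactly.

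\emph{Main obstacle.} The delicate step is the lower bound: one must argue carefully that two points of $G_F$ over the \emph{same} $W_i$ with vertical gap close to $R_F[W_i]$ force $\ge R_F[W_i]/\eta$ boxes, while not double-counting boxes that straddle adjacent columns $W_i$ and $W_{i+1}$. The cleanest way is to consider only even-indexed (or only odd-indexed) $i$ so the relevant vertical strips are genuinely disjoint, derive the bound with a factor $\tfrac12$, and then note that the two parities together give the full sum; alternatively, one observes that a single $\eta$-box intersects $W_i\times\mathbb R$ for at most two values of $i$, so $\sum_i(\text{boxes needed over }W_i)\le 2N_\eta(G_F)$, and this combined with (boxes needed over $W_i$)$\ge R_F[W_i]/\eta$ gives the stated inequality after the limiting argument. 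The continuity of $F$ is used only to guarantee compactness of $\bigcup_{x\in W_i}F(x)$, hence finiteness of $R_F[W_i]$ and attainment of the relevant suprema; no convexity of the images is needed here.
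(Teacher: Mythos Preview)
Your upper bound is fine and matches the paper's column-by-column approach. The lower bound, however, has a genuine gap that your ``main obstacle'' paragraph misidentifies. The problem is not double-counting across columns (the $\eta$-boxes here are mesh boxes $[i\eta,(i+1)\eta]\times[k\eta,(k+1)\eta]$, aligned with the $W_i$, so no box straddles two columns and your even/odd splitting is unnecessary). The real problem is the claim that two graph points over $W_i$ with vertical separation $\ell_i$ force at least $\ell_i/\eta$ boxes in that column. That inference is valid for a single-valued continuous $f$ because the intermediate value theorem makes the vertical projection of the graph over $W_i$ a full interval, so every intermediate box is hit. For a set-valued $F$ continuous only in the Hausdorff metric there is no such connectedness: take $F(x)=\{0,2\}$ for all $x$. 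Then $R_F[W_i]=2$ for every $i$, yet $G_F\cap(W_i\times\mathbb R)=W_i\times\{0,2\}$ meets exactly two $\eta$-boxes, not $2/\eta$. This example shows the stated lower bound fails outright, so no amount of bookkeeping about column overlap will rescue your argument.

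The paper's own proof is a one-line appeal to ``continuity of $F$'' and glosses over the very same point; as stated, the proposition is not true for arbitrary $F:[w,u]\to\mathcal{K}(\mathbb{R})$. It \emph{does} hold if one assumes the images $F(x)$ are intervals (i.e.\ $F$ takes values in $\mathcal{K}_c(\mathbb{R})$): then $G_F$ over $W_i$ is the region between two continuous curves, its vertical projection is a genuine interval of length $R_F[W_i]$, and the IVT-style count goes through. If you want a correct proof of the lower bound, add that hypothesis and argue via connectedness of $\bigcup_{x\in W_i}F(x)$, not merely via two extremal points.
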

   \begin{proof}
   The count of squares having $\eta$ side length in the part above $W_{i}$ intersecting the graph of $F$ is at least $\frac{R_F[W_{i}]}{ \eta} $ and at most  $2+ \frac{R_F[W_{i}]}{ \eta},$ using the continuity of $F$. Taking Sum over all such parts yield the required bounds.
   \end{proof}
   
\begin{example}
Consider $F:[0,1]\rightrightarrows \mathbb{R}$ is a set-valued map defined as $F(x)=[-1,1].$ By Proposition \ref{BBVL2}, we have
 \[\overline{\dim}_B(G_F)=\varlimsup_{\eta \rightarrow 0} \frac{\log N_{ \eta}(G_F)}{- \log (\eta)} \le \varlimsup_{\eta \rightarrow 0} \frac{\log \Big(2m + \frac{1}{\eta}  \overset{m}{\underset{i=1}{\sum}} R_F[W_{i}]\Big)}{- \log (\eta)}\le \varlimsup_{\eta \rightarrow 0} \frac{\log \Big(2m + \frac{1}{\eta}  \overset{m}{\underset{i=1}{\sum}} 2 \Big)}{- \log (\eta)}=2,\]
 because $R_F[W_{i}]=2 \text{ for each } i=1,\ldots, m$ and $W_i=[i\eta, (i+1)\eta]$. Similarly, 
 $$
 \underline{\dim}_B(G_F)=\varliminf_{\eta \rightarrow 0} \frac{\log N_{ \eta}(G_F)}{- \log (\eta)} \ge \varliminf_{\eta \rightarrow 0} \frac{\log \Big( \frac{1}{\eta}  \overset{m}{\underset{i=1}{\sum}} R_F[W_{i}]\Big)}{- \log (\eta)}= \varliminf_{\eta \rightarrow 0} \frac{\log \Big( \frac{1}{\eta}  \overset{m}{\underset{i=1}{\sum}} 2 \Big)}{- \log (\eta)}=2.$$ Therefore, $\dim_B(G_F)=2.$ This shows that Proposition \ref{BBVL2} will be very useful in estimating or finding box dimension of set-valued functions.
\end{example}  
  
\section{Conclusion and Future Direction}\label{conclusion}

In this paper, the term $\alpha$-fractal function has been introduced (Theorem \ref{Thm3.2}), corresponding to set-valued maps. Next, we noticed that, unlike a single-valued $\alpha$-fractal function, a set-valued $\alpha$-fractal function is generally not interpolatory. Still, under certain conditions, it is interpolatory in nature (Remark \ref{rmrk3.3}, Note \ref{Nt3.4}). Also, some properties of this fractal function have been observed (Theorem \ref{Thm3.5}, Theorem \ref{bounded2}). After that, the existence of fractal polynomial, which approximates the convex set-valued map, was established (Theorem \ref{BFOTHM5}). Also, the concept of constrained approximation for set-valued maps is introduced (Theorem \ref{Thm4.7}). Further, we added the definition of a graph of a set-valued map (Definition \ref{newg}) and calculated the fractal dimension of this graph for some class of set-valued maps (Theorem \ref{Thm5.9}, Lemma \ref{lipdim}, and Lemma \ref{Lem5.14}).\\

In this paper, we have taken Minkowski sum of two sets. In the future, we may try to study the fractal functions using metric linear sum of two sets introduced by Dyn and her group \cite{berdysheva2019metric}. Here most of the results are available for convex set-valued maps but using metric linear sum of two sets, we may try to establish these results for the compact set-valued map.\\

Further, fractional calculus for the single-valued map has been widely explored. See, for instance \cite{liang2010box,chandra2021calculus}. In the future, we may try to extend this concept of fractional calculus for set-valued maps and estimate some dimensional results for the graph of the fractional integral and fractional differentiation of set-valued maps.\\

Another future direction of work is in selection of set-valued maps. Following remark can work as a motivation.
\begin{remark}
Consider $F : I \to \mathcal{K}(\mathbb{R})$ to be set-valued function, then a
function $f : I \to \mathbb{R}$ will be characterized as a selection of $F$ if $f(x) \in F(x)$ for
all $x \in I.$ It is an interesting fact to note that for any $1 \le \beta \le 2$, we are getting a selection  $f_{\beta} : I \to \mathbb{R}$ of map $F_2$ of Example \ref{Ex5.2} such that $\dim_H(Gr(f_{\beta}))=\beta.$ This motivates us to ask a natural question that whether such a selection respecting dimension exists or not.
\end{remark}

\section*{Declaration}

\textbf{Conflicts of interest.} We do not have any conflict of interest.\\
\\
\noindent
\textbf{Data availability:} No data were used to support this study.\\
\\
\noindent
\textbf{Code availability:} Not applicable\\
\\
\noindent
\textbf{Authors' contributions:} Each author contributed equally to this manuscript.

\section*{Acknowledgment}
This work is supported by MHRD Fellowship to the 1st author as TA-ship at the Indian Institute of Technology (BHU), Varanasi.

Some results of this paper have been presented at the conference, `` AMS Fall Western Virtual Sectional Meeting (formerly at University of New Mexico) : SS 13A - Special Session on Fractal Geometry and Dynamical Systems. , October 23-24, 2021".

\bibliographystyle{abbrv}
\bibliography{References}

\end{document}